\documentclass[12pt,reqno]{amsart}

\usepackage{mathtools}
\usepackage{amssymb}
\usepackage{array}
\usepackage{booktabs}
\usepackage{graphicx}
\usepackage{placeins}
\usepackage{float}
\usepackage{xcolor}
\usepackage[
  colorlinks=true,
  linkcolor=blue!55!black,
  citecolor=green!40!black,
  urlcolor=blue!65!black
]{hyperref}
\hypersetup{
  pdftitle={Neutral Returns at High-Order Grazing: Sharp Cyclicity, Physical Codimension, and Weighted Crossover},
  pdfauthor={Haibo Lu},
  pdfsubject={Periodic-orbit bifurcation at a continuous high-order grazing seam},
  pdfkeywords={grazing bifurcation, neutral periodic orbit, cyclicity,
    continuous piecewise-smooth flow, Poincare map, ambient codimension,
    positive-part cap, weighted crossover, confluent divided differences}
}
\usepackage[nameinlink,noabbrev]{cleveref}

\numberwithin{equation}{section}

\newtheorem{theorem}{Theorem}[section]
\newtheorem{proposition}[theorem]{Proposition}
\newtheorem{lemma}[theorem]{Lemma}
\newtheorem{corollary}[theorem]{Corollary}

\theoremstyle{definition}
\newtheorem{definition}[theorem]{Definition}

\theoremstyle{remark}
\newtheorem{remark}[theorem]{Remark}

\DeclareMathOperator{\Cyc}{Cyc}

\DeclareMathOperator{\rank}{rank}
\DeclareMathOperator{\sgn}{sgn}
\newcommand{\R}{\mathbb R}

\newcommand{\bfa}{\mathbf a}
\newcommand{\dd}{\,\mathrm d}

\title[Neutral Returns at High-Order Grazing]{%
  Neutral Returns at High-Order Grazing:\\
  Sharp Cyclicity, Physical Codimension,\\
  and Weighted Crossover}
\author{Haibo Lu}
\address{Shanghai Institute of Technology, Shanghai 201418,
  People's Republic of China}
\email{luhaibo1985@gmail.com}
\thanks{ORCID: 0009-0000-2717-5968}
\date{August 2026}
\subjclass[2020]{Primary 34C23, 58K40; Secondary 37G15, 58K60, 34C05}
\keywords{grazing bifurcation, neutral periodic orbit, cyclicity,
continuous piecewise-smooth flow, Poincar\'e map, ambient codimension,
positive-part cap, weighted crossover, confluent divided differences}

\begin{document}

\begin{abstract}
We study a neutral periodic orbit of a continuous two-field flow that is
tangent to a switching seam while nearby orbits enter the second field.  We
separate the roles of two governing integers: the neutral-return order
\(n\geq2\) controls orbit count, whereas the even contact order \(\nu\)
controls grazing codimension, passage scale, and transverse crossover.
Continuity factors the branch mismatch as \(X^+-X^-=hW\); an active
excursion of duration \(O(q^{1/\nu})\) therefore produces a correction of
order \(q_+^{1+1/\nu}\).  A parameter-uniform passage theorem and a
cross-seam Hermite zero theorem then give the sharp fixed-stratum cyclicity:
\(n\) or \(n+1\) periodic orbits, according to the sign coupling of
smooth and active terms.  A rank-\(n\) physical unfolding attains the
applicable bound.  Contact-jet incidence gives ambient codimension
\(\nu-2\) for order-\(\nu\) grazing and \(n+\nu-2\) for simultaneous
neutral grazing.
Transverse contact parameters replace the central power by a weighted
positive-part cap whose onset exponent crosses from \(1+1/\nu\) to the
generic quadratic-contact value \(3/2\).  The same sharp bound holds for both
the exact cap and a prepared physical multiwell return, including births,
mergers, and multiple entry--exit pairs.  A value-only \(C^1\)
counterexample shows that finite cyclicity requires event-derivative
information.
For every even \(\nu\geq4\), a closed polynomial family realizes the
prescribed contact and rank conditions and the sharp periodic-orbit
configurations through actual finite-time first-return maps.
\end{abstract}

\maketitle
\clearpage

\begin{center}
\footnotesize
\textbf{Contents at a glance.}
\hyperref[sec:introduction]{1. Introduction};
\hyperref[sec:framework]{2. Scalar reduction};
\hyperref[sec:mixed-seam]{3. Cross-seam zeros};
\hyperref[sec:sharpness]{4. Fixed-stratum cyclicity};
\hyperref[sec:ambient-codimension]{5. Contact strata};
\hyperref[sec:transverse-cap]{6. Cap and crossover};
\hyperref[sec:transverse-cyclicity]{7. Transverse cyclicity};
\hyperref[sec:closed-realization]{8. Closed realization};
\hyperref[sec:conclusion]{9. Discussion};
\hyperref[app:uniform-hermite]{Appendices A--E: technical proofs}.
\end{center}

\section{Introduction: one orbit, two degeneracies, three questions}
\label{sec:introduction}

A continuous piecewise-smooth flow may switch between smooth vector fields
while remaining continuous.  Let \(h\) define the switching seam
\(\Sigma=\{h=0\}\).  The physical field equals \(X^-\) on \(h\leq0\) and
\(X^+\) on \(h\geq0\), with \(X^-=X^+\) on \(\Sigma\).  We study a
periodic orbit tangent to \(\Sigma\) at one point and otherwise contained in
\(h<0\).  Nearby orbits may briefly enter \(h>0\), follow \(X^+\), and
return to \(h<0\), where they follow \(X^-\); this excursion changes the
Poincar\'e map and may create periodic orbits.

The orbit is \emph{neutral}: on a transverse section its return map \(P\)
satisfies \(P(0)=0\), \(P'(0)=1\), and the first nonzero term of
\(P-\mathrm{id}\) has order \(n\geq2\).  A second degeneracy occurs at the
seam, where the contact has even order \(\nu\).  These two integers have
different dynamical roles:
\begin{equation}
  \begin{array}{c@{\quad}l}
    n&\text{controls the sharp number of nearby periodic orbits},\\
    \nu&\text{controls contact codimension and passage scale,}\\
       &\text{as well as the transverse crossover.}
  \end{array}
  \label{eq:intro-organizing-principle}
\end{equation}
We establish this separation by reducing an actual two-field flow to a
scalar zero problem and realizing the sharp result in a closed polynomial
system.

\subsection{A closed quartic example}
\label{sec:intro-quartic-first}

The case \((n,\nu)=(2,4)\) already displays the full mechanism.  Put
\[
  \varrho=x^2+y^2-1,\qquad
  h_2(x,y)=\varrho-(1-x)^2=y^2+2x-2,
\]
and let \(R=(-y,x)\), \(V=(x,y)\).  For
\(\lambda=(\lambda_0,\lambda_1)\), define
\[
  G_\lambda(x,\varrho)
  =\gamma\varrho^2+\lambda_1\varrho+\lambda_0(1-x)^2,
  \qquad \gamma\neq0,
\]
and
\begin{equation}
  X^-_\lambda=R+\frac12G_\lambda V,\qquad
  X^+_\lambda=X^-_\lambda+\frac{\beta}{2}h_2V,
  \qquad \beta\neq0.
  \label{eq:intro-quartic-fields}
\end{equation}
The physical vector field is
\begin{equation}
  X_\lambda=
  \begin{cases}
    X^-_\lambda,&h_2\leq0,\\
    X^+_\lambda,&h_2\geq0.
  \end{cases}
  \label{eq:intro-quartic-physical-field}
\end{equation}
Since \(X^+_\lambda-X^-_\lambda=(\beta/2)h_2V\), the extensions agree on
the seam and the physical field is locally Lipschitz.

\begin{figure}[t]
  \centering
  \includegraphics[width=0.96\textwidth]{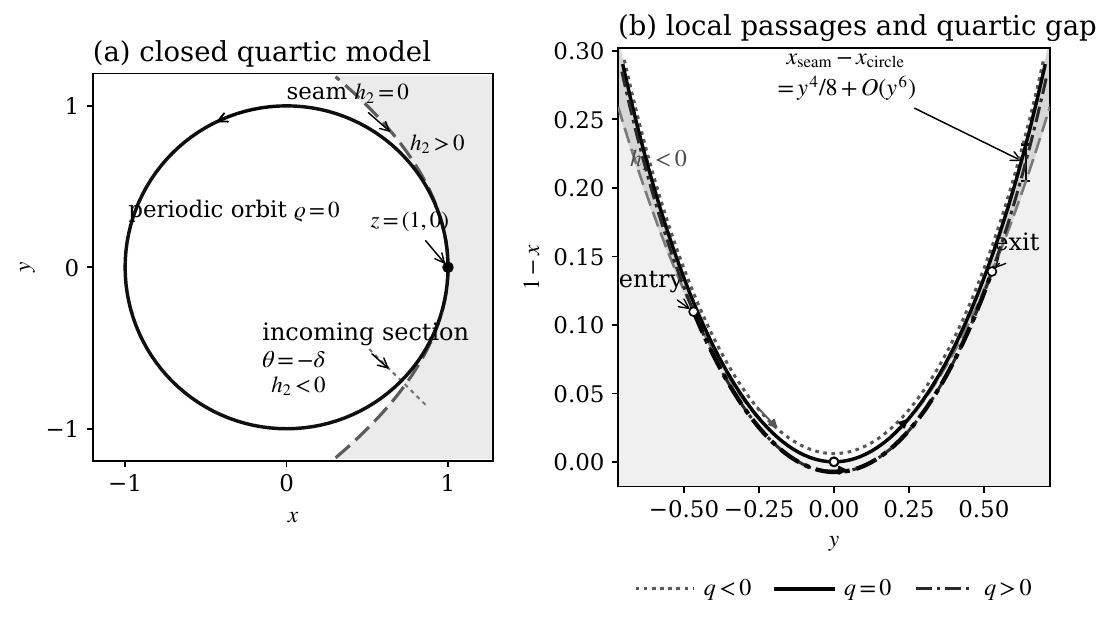}
  \caption{The closed \((n,\nu)=(2,4)\) model.  Panel~(a) shows the central
  orbit, seam, incoming section, and flow direction; shading marks the
  \(X^+\)-side.  In panel~(b), light gray marks the active side \(h_2>0\),
  while darker gray marks the order-four gap between the seam and the
  central circle.  The three line styles distinguish passages with \(q<0\),
  \(q=0\), and \(q>0\).  The \(q>0\) passage includes active entry and
  exit, and the gap satisfies
  \(x_{\rm seam}-x_{\rm circle}=y^4/8+O(y^6)\).  The nearby passages are
  plotted at \(\lambda=0\), \(\gamma=0.35\), \(\beta=0.55\), and
  \(q=\pm0.012\).  The active excursion supplies the
  \(q_+^{5/4}\) term in the local return equation.}
  \label{fig:closed-quartic-geometry}
\end{figure}

At \(\lambda=0\), the unit circle \(\Gamma_0=\{\varrho=0\}\) is a
period-\(2\pi\) orbit and
\[
  h_2|_{\Gamma_0}=-(1-x)^2.
\]
Thus \(\Gamma_0\) stays in \(h_2<0\) except at \(z=(1,0)\), where it has
order-four contact with \(\Sigma=\{x=1-y^2/2\}\).  On an incoming radial
section just before \(z\), let \(q\) label the incoming reference orbit,
with \(q=0\) selecting \(\Gamma_0\).  Here \(q\) is a signed orbit
coordinate, whereas \(t\) below denotes local passage time.  For \(q<0\)
the orbit remains in \(h_2<0\) and follows \(X^-\); at \(q=0\) it is
tangent to the seam; for \(q>0\) it enters \(h_2>0\), follows \(X^+\), and
exits to \(X^-\).  Thus \(X^+\) contributes to the return map through the
active-side excursion.

In reference flow-box time \(t\) centred at \(z\),
\[
  h_2(t,q)=q-\frac14t^4+\text{higher-order terms}.
\]
Entry and exit occur at \(t=O(q^{1/4})\).  Continuity factors the branch
mismatch as \(X^+-X^-=h_2W\), so its size on the active arc is \(O(q)\).
The accumulated correction is therefore
\[
  O(q)\,O(q^{1/4})=O(q^{5/4}).
\]
At even contact order \(\nu\), the same calculation gives active duration
\(O(q^{1/\nu})\) and correction \(O(q^{1+1/\nu})\).  Thus the continuous
flow produces the fractional power directly.

For the quartic family, the time-\(2\pi\) Poincar\'e displacement is
\begin{equation}
  \begin{aligned}
    \Delta(q,\lambda)
      &=S(q,\lambda)+q_+^{5/4}K(q_+^{1/4},q,\lambda),\\
    S(q,0)&=2\pi\gamma q^2+O(q^3),
    &K(0,0,0)&=\frac{8\sqrt2}{5}\beta.
  \end{aligned}
  \label{eq:intro-quartic-family}
\end{equation}
Thus \(c=2\pi\gamma\) records the neutral term and
\(d=8\sqrt2\,\beta/5\) records the transmitted two-field correction.
Zeros of \(\Delta\) correspond to periodic orbits.  The sharp local bound is
two for \(cd>0\) and three for \(cd<0\), coupled across the seam: roots may
lie on either side, and the explicit
chambers in \cref{rem:quartic-chambers} include both
\((N_-,N_+)=(1,2)\) and \((2,1)\).

\subsection{Main results}

For general \(n\geq2\) and even \(\nu\geq2\), the localized passage and
regular return give the full return germ
\begin{equation}
  \begin{aligned}
    \Delta(q,p)
    &=S(q,p)+q_+^{1+1/\nu}K(q_+^{1/\nu},q,p),\\
    c&=\frac{\partial_q^nS(0,0)}{n!}\neq0,
    &d&=K(0,0,0)\neq0,
  \end{aligned}
  \label{eq:intro-full-germ}
\end{equation}
with \(\partial_q^jS(0,0)=0\) for \(0\leq j<n\).  The main conclusions
answer three questions about this orbit.

\begin{enumerate}
\item \emph{How many periodic orbits bifurcate?}  The Cross-Seam theorem
  couples the two sides through their common seam jet and yields
  \begin{equation}
    \Cyc_{(0,0)}(\Delta)
    \leq
    \begin{cases}
      n,&cd>0,\\
      n+1,&cd<0.
    \end{cases}
    \label{eq:intro-main-law}
  \end{equation}
  A rank-\(n\) unfolding attains the applicable bound with hyperbolic
  periodic orbits of alternating stability.

\item \emph{What is the physical codimension?}  Full
  rank of the contact-incidence map gives an order-\(\nu\) grazing stratum
  of ambient codimension \(\nu-2\).  Rank \(n\) of the restricted smooth
  return jet then gives
  \begin{equation}
    \operatorname{codim}\mathcal N_{n,\nu}=n+\nu-2
    \label{eq:intro-total-codimension}
  \end{equation}
  for simultaneous neutral grazing.

\item \emph{What survives transverse contact perturbations?}  The leading
  passage becomes the weighted positive-part cap
  \begin{equation}
    \mathcal C_{\nu,a}(\rho,\mathbf u)
    =\int_{\R}\left(
      \rho-at^\nu-\sum_{j=1}^{\nu-2}u_jt^j
    \right)_+\dd t.
    \label{eq:intro-cap}
  \end{equation}
  Its central exponent is \(1+1/\nu\), while a fixed generic linear tilt
  has the nondegenerate-minimum exponent \(3/2\).  The bound
  \eqref{eq:intro-main-law} holds for the exact cap and for the prepared
  physical multiwell return, including levels at which active intervals are
  born or merge.  A
  value-only \(C^1\) example with the same weighted size estimate has
  arbitrarily many zeros, showing that the derivative structure is
  essential.
\end{enumerate}

The simplest multiwell already explains the event mechanism.  For
\(\mu>0\), let
\[
  \Phi_\mu(t)=t^4-\mu t^2,
  \qquad \min\Phi_\mu=-\frac{\mu^2}{4}.
\]
A level between the minima and the central maximum produces two active
intervals and four seam events; above the maximum the intervals have merged.
For the exact cap, every regular event \(t_j\) contributes
\(1/|\Phi_\mu'(t_j)|>0\) to its second derivative.  The prepared physical
return retains this common sign after the corresponding flow-variation
factors are included.  Multiple wells therefore reinforce the common
derivative monotonicity rather than contributing independent root counts.

\begin{figure}[H]
  \centering
  \includegraphics[width=0.98\textwidth]{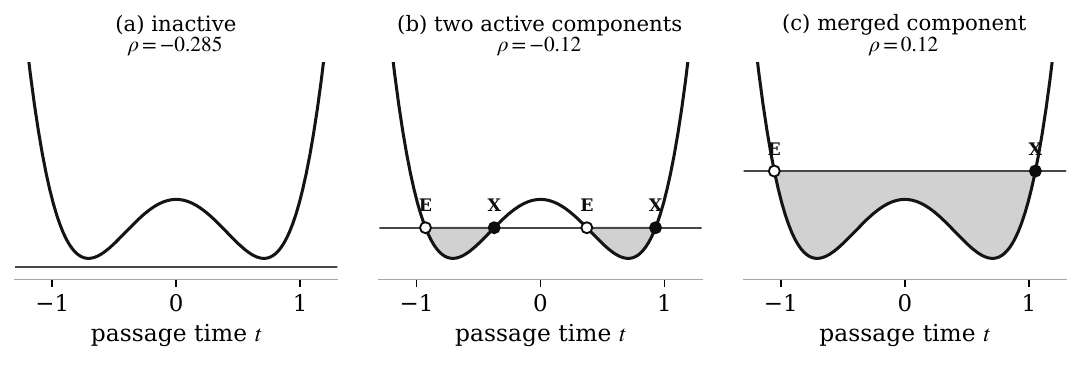}
  \caption{The active set \(\{t:\rho>\Phi_\mu(t)\}\) for
  \(\Phi_\mu(t)=t^4-\mu t^2\), \(\mu=1\).  Raising \(\rho\) creates two
  components and then merges them.  Entries are labeled E and plotted open;
  exits are labeled X and plotted filled.  For the exact cap, all regular boundary terms are positive and
  its first derivative continues across the two critical levels.
  \Cref{thm:prepared-multiwell-boundary-curvature} gives the analogous
  signed continuation for the physical passage.}
  \label{fig:intro-multiwell-events}
\end{figure}
\FloatBarrier

Two reusable results drive the analysis.  The Cross-Seam theorem couples
roots across a shared seam jet; the Prepared Multiwell Boundary-Event
Theorem controls positive-part passages as event topology changes.
For every even \(\nu\geq4\),
\cref{sec:closed-polynomial-realization} realizes their hypotheses in a
polynomial planar family.

\subsection{Relation to earlier entry--exit and grazing results}

Kowalczyk et al.\ \cite{KowalczykEtAl2006} organized local codimension-two
discontinuity-induced bifurcations of limit cycles into a degenerate grazing
point, a nonhyperbolic grazing cycle, and simultaneous grazing events.  For
\(\nu>2\), the present singularity combines the first two mechanisms at
higher codimension; \(\nu=2\) retains the nonhyperbolic-cycle mechanism at
ordinary quadratic grazing.  Its multiple entry--exit pairs arise by
splitting one contact germ rather than from independent grazing points of
the third type.  Colombo and Dercole \cite{ColomboDercole2010} analyzed
generic fold, flip, and Neimark--Sacker cycles through a Poincar\'e map
defined on the noncontacting side.  The common case is \((n,\nu)=(2,2)\):
their fold has multiplier \(+1\) and generic grazing contact.  They derive
the local parameter-plane bifurcation curves from the one-sided return.
Here continuity constructs the physical return on both sides of the seam,
and arbitrary \(n\geq2\) and even \(\nu\geq2\) lead to coupled total
cyclicity, contact codimension, and a transverse multi-event passage.

Entry--exit functions and the cyclicity of slow--fast cycles are developed
in \cite{DeMaesschalckSchecter2016,DeMaesschalckDumortierRoussarie2011}.
The separated slow--fast entry--exit--grazing circuit in
\cite{Lu2026Sharp} has \((n,\nu)=(2,2)\) and obtains a \(q_+^{3/2}\)
correction by composing distinct entry--exit and quadratic-grazing
passages.  The shared minimal-order case consists of the continuous
quadratic-grazing passage, its smooth transmission into a return map, the
sharp two-versus-three scalar count, and rank-two attainment.  The related
preprint's distinct conclusions are the singular-parameter-uniform
entry--exit passage, fixed-positive-\(\varepsilon\) realizations, the full
marked quadratic chamber and stability classification, and the cutoff-Gause
pullback.  The present article instead treats arbitrary \(n\geq2\) and even
\(\nu\geq2\) on one closed neutral orbit, proves the Cross-Seam theorem,
ambient contact codimension, weighted crossover, and prepared physical
multiwell cyclicity, and gives a different closed rank realization for every
even \(\nu\geq4\).  The shared minimal-order statements are proved
independently in the two settings.

Fang and Chen \cite{FangChen2025,FangChen2026GrazingLoops}, together with
Chen, Fang, and Li \cite{ChenFangLi2026Arbitrary}, treat tangent points and
critical or grazing loops of arbitrary or high multiplicity in discontinuous
Filippov settings.  Functional perturbations there generate and count
crossing cycles, sliding loops, and tangent points.  Here branch matching is
continuous, the return is neutral, the unfolding is organized by a finite
contact jet, and the object counted is the total collection of periodic
orbits across the seam.  The present codimension, weighted cap, and physical
multi-event return results address these continuous neutral features.

\subsection{Mathematical context}

Generic impact grazing is classically described by discontinuity maps and
fractional-power normal forms \cite{Nordmark1991}; broader treatments appear in
\cite{diBernardoBuddChampneys2001,
diBernardoBuddChampneysKowalczyk2008,
ColomboDiBernardoHoganJeffrey2012}.  In a continuous stick--slip model with
a first-derivative change across the switching surface, Dankowicz and
Nordmark \cite{DankowiczNordmark2000} derived the quadratic-grazing
\(3/2\) correction.
Nordmark's subsequent theory for prescribed higher-order continuity is the
direct precedent for branch matching
\cite{Nordmark2002HigherContinuity}.  Degenerate impact contacts supply
further high-order geometry
\cite{Chillingworth2010,LimaPerdigao2026}.  Our localized passage treats
every even \(\nu\), derives the full uniform ramified remainder from
\(X^+-X^-=hW\), and transmits it through a regular transition to the actual
first-return map.

Lie-derivative contact strata and polynomial models for traversing flows are
developed by Katz \cite{Katz2017Traversally}.  The incidence argument here
follows a selected finite-parameter family on state--parameter space,
permits \(\nu\) larger than the state dimension, and controls the parameter
projection that produces codimension \(\nu-2\).  The cap derivative is a
one-dimensional instance of coarea \cite[Theorem~3.2.22]{Federer1969}; the
uniform analysis extends across polynomial critical levels, retains all
active components, and respects the contact-jet weighted dilation.

Transverse-event sensitivity provides first-order formulas in regular
chambers
\cite{BurdenSastryKoditschekRevzen2016,KhanBarton2017,
SacconVandeWouwNijmeijer2014}.  In the present family events coalesce as
their slopes vanish, and several active intervals may be born or merge.  The
boundary-event formula supplies the common endpoint sign and continuation
across that finite discriminant.  Finally, Tchebycheff systems and confluent
divided differences give classical zero-count ingredients
\cite{KarlinStudden1966,NovaesTorregrosa2017,deBoor2005}.  The Cross-Seam
theorem combines them with the common seam jet and the full smooth ramified
remainder to obtain one coupled total rather than separate one-sided bounds.

\section{From the flow to a ramified scalar displacement}
\label{sec:framework}
\label{sec:realization}

A continuous two-field contact followed by a regular return produces a
Poincar\'e displacement with a smooth neutral term and an active-side
correction.  We analyze the contact passage and regular transmission before
applying a scalar zero theorem to the resulting one-sided shape.

\subsection{The two-field contact and its signed orbit coordinate}

Let \(p\) range near the origin and let \(h_p\) be a jointly smooth
defining function for a seam
\(\Sigma_p=\{h_p=0\}\) in a two-dimensional phase space.  Let
\(X_p^-\) and \(X_p^+\) be smooth vector-field extensions on a common
neighbourhood, and let
\[
  X_p=
  \begin{cases}
    X_p^-,&h_p\leq0,\\
    X_p^+,&h_p\geq0
  \end{cases}
\]
be the physical field.  We assume \(X_p^+=X_p^-\) on \(\Sigma_p\).
Joint Hadamard factorization then gives a smooth vector field \(W_p\) with
\begin{equation}
  X_p^+-X_p^-=h_pW_p.
  \label{eq:grazing-Hadamard}
\end{equation}
This factorization is the analytic content of continuity at the seam.

Fix an even integer \(\nu\geq2\).  On the selected order-\(\nu\)
contact stratum, suppose there is a smooth contact point \(z_p\) such that
\begin{equation}
  \begin{aligned}
    h_p(z_p)&=0,&
    dh_p(z_p)&\neq0,&
    X_p^-(z_p)&\neq0,\\
    (X_p^-)^jh_p(z_p)&=0 &&(1\leq j<\nu),&
    \kappa_\nu(p)&:=-(X_p^-)^\nu h_p(z_p)>0.
  \end{aligned}
  \label{eq:grazing-geometry}
\end{equation}
The normal strength of the branch mismatch is
\begin{equation}
  \beta(p)=dh_p(z_p)[W_p(z_p)].
  \label{eq:grazing-beta}
\end{equation}
Let \(I_p\) be the local first integral of \(X_p^-\), normalized by
\begin{equation}
  X_p^-I_p=0,\qquad
  I_p(z_p)=0,\qquad
  dI_p(z_p)=dh_p(z_p).
  \label{eq:grazing-first-integral}
\end{equation}
On a small incoming section, use
\(\rho=I_p\) as the signed reference-orbit coordinate.  Negative levels
remain inactive, the zero level touches the seam, and positive levels have
one local active interval.  In the leading flow-box model,
\[
  h_0(t,\rho)=\rho-\frac{\kappa_\nu}{\nu!}t^\nu,
  \qquad
  T_\rho=\left(\frac{\nu!\rho}{\kappa_\nu}\right)^{1/\nu},
\]
so the active interval is \((-T_\rho,T_\rho)\).  Its cap area is
\begin{equation}
  \int_{-T_\rho}^{T_\rho}h_0(t,\rho)\dd t
  =
  \frac{2\nu}{\nu+1}
  \left(\frac{\nu!}{\kappa_\nu}\right)^{1/\nu}
  \rho^{1+1/\nu}.
  \label{eq:leading-grazing-cap}
\end{equation}

\begin{figure}[tbp]
  \centering
  \includegraphics[width=0.82\textwidth]{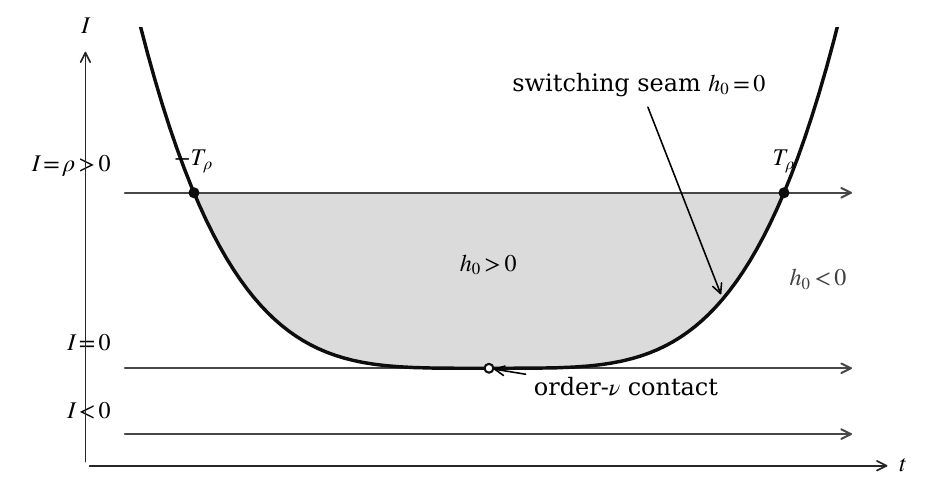}
  \caption{The reference flow-box geometry (shown for \(\nu=4\) and
  \(\kappa_\nu/\nu!=1\)).  Horizontal lines are reference \(X^-\)-orbits.
  A positive level meets the seam at \(t=\pm T_\rho\), and the
  shaded cap is exactly the integral in
  \eqref{eq:leading-grazing-cap}.  The zero level only touches the seam;
  negative levels have no local active interval.}
  \label{fig:grazing-cap}
\end{figure}

\begin{proposition}[Localized Active-Region Passage]
\label[proposition]{thm:localized-active-passage}
\label[proposition]{prop:even-order-grazing-comparison}
Assume
\eqref{eq:grazing-Hadamard}--%
\eqref{eq:grazing-first-integral}, with the contact points and incoming and
outgoing sections depending smoothly on \(p\).  After shrinking a common
grazing box, every orbit with sufficiently small \(\rho>0\) has a unique
first entry and exit, both within time \(O(\rho^{1/\nu})\) of the reference
contact, and no other local seam events.  Let
\(\mathcal D_{\mathrm{gr}}(\rho,p)\) be the outgoing \(I_p\)-coordinate of
the physical excursion minus that of the reference \(X_p^-\)-excursion
launched from the same incoming point.  Then
\begin{equation}
  \mathcal D_{\mathrm{gr}}(\rho,p)
  =
  \begin{cases}
    0,&\rho\leq0,\\
    \rho^{1+1/\nu}\mathcal A_\nu(\rho^{1/\nu},p),&\rho\geq0,
  \end{cases}
  \label{eq:grazing-transition}
\end{equation}
where \(\mathcal A_\nu\) is jointly smooth on its one-sided domain and
\begin{equation}
  \mathcal A_\nu(0,p)
  =
  \mathfrak g_\nu(p)
  :=
  \frac{2\nu}{\nu+1}
  \left(\frac{\nu!}{\kappa_\nu(p)}\right)^{1/\nu}\beta(p).
  \label{eq:grazing-leading-coefficient}
\end{equation}
Equivalently, for a jointly smooth \(B_\nu\),
\begin{equation}
  \mathcal D_{\mathrm{gr}}(\rho,p)
  =
  \mathfrak g_\nu(p)\rho_+^{1+1/\nu}
  +\rho_+^{1+2/\nu}B_\nu(\rho_+^{1/\nu},p).
  \label{eq:grazing-expanded-transition}
\end{equation}
\end{proposition}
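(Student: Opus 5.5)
We work in a flow box for \(X_p^-\) near \(z_p\), reduce the seam to a smooth function of time and level, and then compute the outgoing \(I_p\)-difference by a variation-of-constants formula along the active arc.

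\textbf{Step 1: flow-box coordinates.} Straighten \(X_p^-\) smoothly in \(p\) to coordinates \((t,\rho)\) with \(X_p^-=\partial_t\), \(\rho=I_p\), and \(z_p=(0,0)\). Write \(H(t,\rho,p)=h_p\) in these coordinates. By \eqref{eq:grazing-first-integral} and \(I_p(z_p)=0\), \(\partial_\rho H(0,0,p)=1\). By \eqref{eq:grazing-geometry}, \(\partial_t^jH(0,0,p)=0\) for \(j<\nu\) and \(\partial_t^\nu H(0,0,p)=-\kappa_\nu\).

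Solve \(H=0\) for \(\rho\) by the implicit function theorem: \(H=0\iff \rho=\Phi(t,p)\), with
\[
\Phi=\frac{\kappa_\nu}{\nu!}t^\nu\bigl(1+O(t)\bigr).
\]
Writing \(\Phi=\frac{\kappa_\nu}{\nu!}\,s(t,p)^\nu\) with \(s=t(1+O(t))\) a smooth local diffeomorphism (an \(\nu\)-th root of a positive unit), the reference level \(\rho\) is active exactly when \(|s|<(\nu!\rho/\kappa_\nu)^{1/\nu}\). This gives one interval \(\bigl(t_-(\rho),t_+(\rho)\bigr)\) for \(\rho>0\) and none for \(\rho\le0\).

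The endpoints are \(t_\pm=s^{-1}\bigl(\pm c\,\rho^{1/\nu}\bigr)\), which are smooth functions of \(\sigma=\rho^{1/\nu}\). Moreover \(H=(\rho-\Phi)\,U\) with \(U(0,0,p)=1\).

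\textbf{Step 2: perturbing to the physical orbit.} The physical orbit solves \(\dot x=X^-+H_+W\), where \(H_+=(\max(H,0))\) along the physical trajectory. Since \(X^+-X^-=hW\), the vector fields differ by \(O(\rho)\) on the active strip, and the strip has width \(O(\sigma)\). Gronwall therefore shows that the physical trajectory stays \(O(\rho\sigma)\)-close to the reference line in \(\rho\).

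\textbf{Step 3: uniqueness of seam events.} Uniqueness of entry and exit follows from transversality of \(\dot h\) away from the tangency, with \(\dot h\approx-\Phi'(t)\). To control the degenerate ends, rescale \(t=\sigma\tau\) and \(\rho=\sigma^\nu r\). After dividing by \(\sigma^\nu\), \(h\) becomes
\[
r-\frac{\kappa_\nu}{\nu!}\tau^\nu+O(\sigma).
\]
At \(r=1\) this has simple zeros at \(\pm\tau_0\), and a smooth perturbation argument in \(\sigma\) keeps them simple. The blow-up is what makes this uniform.

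\textbf{Step 4: the outgoing difference.} Since \(X_p^-I_p=0\), along the physical orbit
\[
\frac{d}{dt}I_p=h\,dI_p[W].
\]
Hence
\[
\mathcal D_{\mathrm{gr}}=\int_{t_-}^{t_+} h\,dI_p[W]\,dt .
\]
The correction comes entirely from the active arc because the reference and physical orbits coincide before entry. After exit both follow \(X^-\), which preserves \(I_p\), so the \(I_p\)-coordinate does not change further.

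\textbf{Step 5: blow-up computation.} In the variables \(\tau\) and \(\sigma\), the integrand is
\[
\sigma^\nu\bigl(1-\tfrac{\kappa_\nu}{\nu!}\tau^\nu+O(\sigma)\bigr)\bigl(\beta(p)+O(\sigma)\bigr),
\]
and \(dt=\sigma\,d\tau\). The limits \(\tau_\pm(\sigma,p)\) are smooth. Differentiating under the integral in the compact \(\tau\)-domain gives
\[
\mathcal D_{\mathrm{gr}}=\sigma^{\nu+1}\mathcal A_\nu(\sigma,p),
\]
with \(\mathcal A_\nu\) jointly smooth. The leading coefficient is \(\beta\) times the cap area \eqref{eq:leading-grazing-cap}, which equals \(\mathfrak g_\nu\). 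Expanding \(\mathcal A_\nu\) once in \(\sigma\) gives \eqref{eq:grazing-expanded-transition}.

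\textbf{Main obstacle.} The main difficulty is the joint smoothness in \(\sigma\) down to \(\sigma=0\). Because the physical trajectory is defined implicitly through its own entry and exit events, the rescaled ODE must be shown to be smooth in \(\sigma\).

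This works because the perturbation \(hW\), in blow-up variables \(\rho=\sigma^\nu r\), becomes \(\sigma^{\nu}\)-small relative to the \(\rho\)-scale \(\sigma^\nu\). The rescaled system \(dr/d\tau=\sigma\,(\ldots)\) is then smooth in \(\sigma\), and its event times are simple zeros. Joint smoothness then follows from smooth dependence of ODE solutions and event functions on parameters.
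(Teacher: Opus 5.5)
Your proposal is correct and follows essentially the paper's route: a parameter-dependent \(X_p^-\)-flow box, the weighted blow-up \(t=\sigma\tau\) with level \(\sigma^\nu r\) (your rescaled level is \(1+\sigma w\) for the paper's \(w\), defined by \(I=\sigma^\nu+\sigma^{\nu+1}w\)), implicit-function selection of the simple entry and exit zeros, smooth ODE dependence for \(\mathcal A_\nu\), the cap integral for \(\mathfrak g_\nu\), and one Hadamard division for \(B_\nu\). The remaining differences are minor, and two points need tightening.

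\begin{itemize}
\item You write the seam as a graph \(h=(\rho-\Phi)U\) and use the exact event identity \(\dot h=-\Phi'(t)U\). The paper instead uses a Taylor--Hadamard form of \(h\) and the annular sign estimate \(h_p\le -c_0|t|^\nu\).
\item Your Step~2 ``Gronwall'' is really a bootstrap, and you should close it explicitly. Keep the level in \([\rho/2,2\rho]\); this confines activity to \(|t|\le C\sigma\), so the drift is \(O(\rho\sigma)\), which in turn keeps the level in that range.
\item In Step~4, integrating in the flow-box coordinate \(t\) requires the factor \((1+h\,dt[W])^{-1}\), because \(\dot t=1+h\,dt[W]\) on the active arc. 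This factor is harmless: it is smooth and equal to \(1+O(\sigma^\nu)\).
\end{itemize}
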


The proof uses localization and weighted rescaling in the flow-box equation
\[
  h_p(t,I)
  =
  I-\frac{\kappa_\nu(p)}{\nu!}t^\nu
  +O(t^{\nu+1})+O(tI)+O(I^2).
\]
The scaling \(I=r^\nu+r^{\nu+1}w\), \(t=ru\) confines the seam events and
produces the cap integral \eqref{eq:leading-grazing-cap};
\cref{app:passage} proves the common domain and full uniform remainder.

\subsection{Regular transmission to an actual return}

The local passage forms one segment of a periodic itinerary.  To describe
the effect of a smooth outgoing transition, let
\(\mathcal R_p\) be a smooth local diffeomorphism and define
\[
  \mathcal T_{\mathcal R}[H](q,p)
  =
  \mathcal R_p(q+H(q,p))-\mathcal R_p(q).
\]
This operation \emph{postcomposes a local correction with a regular
transition} and differs from conjugation by a change of Poincar\'e section.

\begin{proposition}[Fixed ramified transmission and first-return recognition]
\label[proposition]{thm:smooth-regular-transmission}
\label[proposition]{prop:reference-return-composition}
Let \((q,p)\mapsto\mathcal R_p(q)\) be jointly \(C^\infty\), suppose
each \(\mathcal R_p\) is a local diffeomorphism,
\(\mathcal R_0(0)=0\), and put
\[
  S(q,p)=\mathcal R_p(q)-q,\qquad
  L_0=(\mathcal R_0)'(0)\neq0.
\]
If the incoming coordinate is \(q=\rho\) in
\cref{prop:even-order-grazing-comparison}, then
\begin{equation}
  P_p(q)
  =
  \mathcal R_p\bigl(q+\mathcal D_{\mathrm{gr}}(q,p)\bigr)
  \label{eq:reference-physical-composition}
\end{equation}
has displacement
\begin{equation}
  \Delta(q,p)
  =
  S(q,p)+q_+^{1+1/\nu}
  K_\nu(q_+^{1/\nu},q,p),
  \label{eq:geometric-full-ramified-form}
\end{equation}
where \(K_\nu\) is jointly smooth and
\begin{equation}
  K_\nu(0,0,0)
  =
  L_0\frac{2\nu}{\nu+1}
  \left(\frac{\nu!}{\kappa_\nu(0)}\right)^{1/\nu}\beta(0).
  \label{eq:composition-leading-coefficient}
\end{equation}
If the local passage and \(\mathcal R_p\) are compatible finite-time flow
maps along a common itinerary, all intervening sections remain transverse,
and the composed orbit has no earlier incoming-section intersection, then
\(P_p\) is the actual first-return Poincar\'e map.
\end{proposition}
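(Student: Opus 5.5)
The plan has two parts: an algebraic expansion of the composition \eqref{eq:reference-physical-composition} using \cref{prop:even-order-grazing-comparison}, and a separate geometric argument that \(P_p\) is the first return.

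\emph{Algebraic expansion.} Write \(D=\mathcal D_{\mathrm{gr}}(q,p)\). For \(q\leq0\), \cref{prop:even-order-grazing-comparison} gives \(D=0\), so \(P_p(q)-q=\mathcal R_p(q)-q=S(q,p)\). This already has the form \eqref{eq:geometric-full-ramified-form}, since \(q_+=0\) there.

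For \(q\geq0\), put \(r=q^{1/\nu}\). By \eqref{eq:grazing-transition}, \(D=r^{\nu+1}\mathcal A_\nu(r,p)\). Apply Taylor's formula with integral remainder in the second argument:
\[
  \mathcal R_p(q+D)-\mathcal R_p(q)
  =D\int_0^1\mathcal R_p'(q+sD)\dd s .
\]
Define
\[
  K_\nu(r,q,p)=\mathcal A_\nu(r,p)\int_0^1\mathcal R_p'\bigl(q+s\,r^{\nu+1}\mathcal A_\nu(r,p)\bigr)\dd s .
\]
Here \(r\) and \(q\) are treated as independent variables. The function \(K_\nu\) is jointly smooth on \(\{r\geq0\}\times\) (a neighbourhood of \((0,0)\)), because \(\mathcal R\) is jointly \(C^\infty\), \(\mathcal A_\nu\) is jointly smooth on its one-sided domain, and differentiation under the integral is legitimate. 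Substituting \(r=q_+^{1/\nu}\) yields \eqref{eq:geometric-full-ramified-form}.

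At \((0,0,0)\) the integrand is \(\mathcal R_0'(0)=L_0\), and \(\mathcal A_\nu(0,0)=\mathfrak g_\nu(0)\) by \eqref{eq:grazing-leading-coefficient}. Hence \(K_\nu(0,0,0)=L_0\,\mathfrak g_\nu(0)\), which is \eqref{eq:composition-leading-coefficient}.

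One bookkeeping point concerns the domain. We must shrink neighbourhoods so that \(q+sD\) stays in the domain of \(\mathcal R_p\). This follows from \(|D|\leq Cq^{1+1/\nu}\), which is small uniformly in \(p\). If one wants \(K_\nu\) smooth on a full neighbourhood, extend \(\mathcal A_\nu\) smoothly to \(r<0\) by a Seeley extension; this does not affect the values used.

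\emph{First-return recognition.} This is the part I expect to need the most care; the algebra above is routine. Under the stated compatibility hypotheses, the physical trajectory from the incoming section is the concatenation of two finite-time maps. The first is the local passage map, whose output coordinate is \(q+D\) because \(\mathcal D_{\mathrm{gr}}\) is the difference of outgoing \(I_p\)-coordinates. The second is the regular transition \(\mathcal R_p\) along the itinerary through transverse sections.

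The argument would proceed in four steps:
\begin{enumerate}
\item Outside the grazing box the orbit stays a bounded distance from \(\Sigma_p\), so only one smooth field acts there.
\item By \cref{prop:even-order-grazing-comparison}, inside the box there are no seam events other than the unique entry and exit.
\item Transversality of the intervening sections, together with continuous dependence of solutions of the locally Lipschitz physical field, makes the hitting times continuous and keeps the intersections isolated for \((q,p)\) near \((0,0)\).
\item The hypothesis excluding earlier incoming-section intersections, which holds for the reference orbit, persists for nearby orbits by compactness of the finite time interval and transversality.
\end{enumerate}
Together these show that \(P_p\) is the actual first-return Poincaré map. The subtle point is the uniformity of the "no earlier intersection" property near the tangency. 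It is handled because the tangency is with \(\Sigma_p\), not with the section, so the grazing passage is confined to a small box disjoint from the incoming section.
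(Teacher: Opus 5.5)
Your proposal is correct and follows the paper's proof: the paper also writes \(q=s^\nu\) and \(\mathcal D_{\mathrm{gr}}=s^{\nu+1}\mathcal A_\nu(s,p)\), applies the exact divided difference \(\mathcal R_p(q+\mathcal D_{\mathrm{gr}})-\mathcal R_p(q)=\mathcal D_{\mathrm{gr}}\int_0^1\mathcal R_p'(q+\theta\mathcal D_{\mathrm{gr}})\dd\theta\) with \(s\) and \(q\) treated as independent variables, and reads off \(K_\nu(0,0,0)=L_0\mathfrak g_\nu(0)\). For first-return recognition the paper simply invokes the stated itinerary, transversality, and no-earlier-intersection hypotheses, so your four-step persistence sketch is more than the statement requires (its last step re-derives something that is assumed), but it does no harm.
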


\begin{proposition}[Weighted-value transmission]
\label[proposition]{prop:weighted-value-transmission}
Let \((q,p)\mapsto\mathcal R_p(q)\) be jointly \(C^\infty\), suppose each
\(\mathcal R_p\) is a local diffeomorphism, \(\mathcal R_0(0)=0\), and
put \(L_0=(\mathcal R_0)'(0)\neq0\).  Let \(r\downarrow0\),
\(q_r=r^\nu\rho\), and let the remaining parameters follow a weighted
path \(p_r\) with \(\lVert p_r\rVert=O(r^2)\).  Uniformly for
\((\rho,\mathbf u)\) in a fixed compact set, suppose
\[
  H(q_r,p_r)
  =
  \beta_0r^{\nu+1}\mathcal C(\rho,\mathbf u)+O(r^{\nu+2}).
\]
Then
\[
  \mathcal T_{\mathcal R}[H](q_r,p_r)
  =
  L_0\beta_0r^{\nu+1}\mathcal C(\rho,\mathbf u)+O(r^{\nu+2})
\]
uniformly on that compact set.  This proposition controls function values;
derivative and zero-count estimates require separate arguments.
\end{proposition}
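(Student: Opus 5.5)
The plan is a first-order Taylor argument for \(\mathcal R_{p}\) in the state variable, with the remainder controlled uniformly by joint smoothness. Write \(q=q_r\), \(p=p_r\) and \(H=H(q_r,p_r)\). By hypothesis,
\[
H=\beta_0r^{\nu+1}\mathcal C(\rho,\mathbf u)+O(r^{\nu+2}).
\]
Continuous functions are bounded on compacts, so \(|\mathcal C|\) is bounded uniformly on the fixed compact set, and therefore \(H=O(r^{\nu+1})\) uniformly. Also \(q_r=r^\nu\rho=O(r^\nu)\) and \(\lVert p_r\rVert=O(r^2)\). Hence both \(q_r\) and \(q_r+H\) lie, for small \(r\), in a fixed compact neighbourhood of \((0,0)\) on which \(\mathcal R\) is \(C^\infty\). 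In particular \(\mathcal T_{\mathcal R}[H]\) is defined there.

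The key step is the mean value theorem in integral form:
\[
\mathcal T_{\mathcal R}[H](q_r,p_r)=H\int_0^1(\mathcal R_{p_r})'(q_r+sH)\dd s.
\]
Joint \(C^1\) (indeed \(C^2\)) regularity gives a Lipschitz bound on \((q,p)\mapsto(\mathcal R_p)'(q)\) near the origin. This yields
\[
(\mathcal R_{p_r})'(q_r+sH)=L_0+O\bigl(|q_r|+|H|+\lVert p_r\rVert\bigr)=L_0+O(r^2),
\]
uniformly in \(s\in[0,1]\) and in \((\rho,\mathbf u)\). The \(O(r^2)\) here comes from the parameter term \(\lVert p_r\rVert=O(r^2)\). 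The state terms are \(O(r^\nu)\), and since \(\nu\geq2\) they are also \(O(r^2)\).

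Multiplying out gives
\[
\mathcal T_{\mathcal R}[H]=L_0H+O(r^{\nu+1}\cdot r^2)=L_0\beta_0r^{\nu+1}\mathcal C+O(r^{\nu+2})+O(r^{\nu+3}),
\]
which is the claim. Uniformity holds because every constant above depends only on the fixed compact neighbourhood of \((0,0)\), the bound on \(|\mathcal C|\), and the uniform constant in the hypothesis on \(H\).

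No step is a real obstacle. The only care needed is to observe that the \(O(r^2)\) drift of the derivative away from \(L_0\), coming mostly from \(p_r\), is multiplied by \(H=O(r^{\nu+1})\), so it stays below the stated error \(O(r^{\nu+2})\). It is also worth noting explicitly that the argument is purely at the level of values. Nothing is said about \(\rho\)-derivatives of the error, consistent with the proposition's closing remark.
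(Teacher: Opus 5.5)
Your argument is correct and is essentially the paper's own proof. Both write \(\mathcal T_{\mathcal R}[H]\) as the exact divided difference \(H\int_0^1\mathcal R_{p_r}'(q_r+\theta H)\dd\theta\), show the integral is \(L_0+O(r^2)\) from \(\lVert p_r\rVert=O(r^2)\), \(q_r=O(r^\nu)\) and \(H=O(r^{\nu+1})\), and then multiply out. Like the paper, you rely on \(\mathcal C\) being bounded on the compact set (true for the continuous cap), which the statement leaves implicit.
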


\begin{proposition}[Threshold-curvature transmission across finite event strata]
\label[proposition]{prop:threshold-curvature-transmission}
Let \((q,p)\mapsto\mathcal R_p(q)\) be jointly \(C^2\), suppose each
\(\mathcal R_p\) is a local diffeomorphism, \(\mathcal R_0(0)=0\), and put
\(L_0=(\mathcal R_0)'(0)\neq0\).  Let \(\tau\) be continuous with
\(\tau(0)=0\).  Fix \(\rho_0>0\) so that
\(|\tau(p)|<\rho_0\) on the parameter neighbourhood under consideration.
Suppose \(H\) is \(C^1\) in \(q\), with \(H\) and \(H_q\) jointly continuous
in \((q,p)\), and has zero value and \(q\)-derivative on
\(q\leq\tau(p)\).  For each fixed \(p\), let
\(E_p\subset[\tau(p),\rho_0)\) be finite, possibly containing the threshold,
and assume that \(H\) is \(C^2\) on the open inactive side and at every
active level outside \(E_p\).  Fix \(\beta_0\neq0\), and suppose, uniformly
  along the regular active germ as \((q,p)\to(0,0)\),
\[
  H,H_q\longrightarrow0,\qquad
  \sgn H_{qq}=\sgn\beta_0,\qquad
  |H_{qq}|\longrightarrow\infty .
\]
Then, after shrinking, the derivative of
\(\mathcal T_{\mathcal R}[H]\) is continuous on the active-level interval
\([\tau(p),\rho_0)\), and
\[
  q\longmapsto
  \sgn(L_0\beta_0)\,
  \partial_q\mathcal T_{\mathcal R}[H](q,p)
\]
is strictly increasing there.  At regular active points its second
derivative has sign \(\sgn(L_0\beta_0)\), and its magnitude diverges as
\((q,p)\to(0,0)\).  The derivative monotonicity extends across finite births
or mergers, including points where the second derivative is undefined.
\end{proposition}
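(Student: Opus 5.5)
Write \(F(q,p)=\mathcal T_{\mathcal R}[H](q,p)=\mathcal R_p(q+H(q,p))-\mathcal R_p(q)\). The plan is to differentiate this directly and let the divergent curvature of \(H\) dominate everything else.

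\emph{First derivative.} Since \(H\) is \(C^1\) in \(q\) and \(\mathcal R_p\) is \(C^2\), the chain rule gives
\[
  F_q=\mathcal R_p'(q+H)\,(1+H_q)-\mathcal R_p'(q).
\]
This expression is jointly continuous in \((q,p)\) on the whole active-level interval \([\tau(p),\rho_0)\). At the threshold, and on \(q\le\tau(p)\), we have \(H=H_q=0\), so \(F_q=0\) there.

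\emph{Second derivative at regular active points.} Here \(H\) is \(C^2\), and
\[
  F_{qq}=\mathcal R_p''(q+H)(1+H_q)^2+\mathcal R_p'(q+H)H_{qq}-\mathcal R_p''(q).
\]
After shrinking the neighbourhood:
\begin{itemize}
\item \(\mathcal R_p''\) is bounded by joint \(C^2\) regularity;
\item \(H,H_q\to0\) uniformly;
\item \(\mathcal R_p'(q+H)\to L_0\neq0\) by joint continuity.
\end{itemize}
Hence \(F_{qq}=\mathcal R_p'(q+H)H_{qq}+O(1)\). Since \(|H_{qq}|\to\infty\) with sign \(\sgn\beta_0\), the leading term dominates, so \(\sgn F_{qq}=\sgn(L_0\beta_0)\) and \(|F_{qq}|\ge |L_0|\,|H_{qq}|/2-C\to\infty\). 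This gives the sign and divergence claims.

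\emph{Monotonicity across the finite set \(E_p\).} Fix \(p\) and set \(g(q)=\sgn(L_0\beta_0)F_q(q,p)\). Then \(g\) is continuous on \([\tau(p),\rho_0)\). At every point outside the finite set \(E_p\), \(g\) is differentiable with \(g'>0\), by the previous step.

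On each open subinterval between consecutive points of \(E_p\), the mean value theorem makes \(g\) strictly increasing. Continuity of \(g\) lets us glue these monotone pieces across the finitely many points of \(E_p\): for \(a<e<b\) with \(e\in E_p\), we get \(g(a)<g(e)<g(b)\) by taking limits from each side. Monotonicity then holds on all of \([\tau(p),\rho_0)\), including the threshold endpoint. This is how births and mergers are handled without ever differentiating at the event levels.

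\emph{Main obstacle: uniformity.} The hypotheses "\(H,H_q\to0\)" and "\(|H_{qq}|\to\infty\)" are stated as \((q,p)\to(0,0)\) along the regular active germ. We therefore need one neighbourhood on which, for every \(p\), the whole active interval lies in the germ where the bound \(|H_{qq}|\gg1\) beats the \(O(1)\) terms. If \(\rho_0\) is too large for this, the plan is to shrink \(\rho_0\) together with the parameter neighbourhood, keeping \(|\tau(p)|<\rho_0\), which is possible by continuity of \(\tau\) with \(\tau(0)=0\). The statement's "after shrinking" permits this. The remaining care is to use only continuity of \(F_q\), not of \(F_{qq}\), at points of \(E_p\).
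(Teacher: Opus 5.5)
Your proposal is correct and follows essentially the same argument as the paper. You use the chain-rule formula for \(\partial_q\mathcal T_{\mathcal R}[H]\) to get continuity, let the divergent term \(\mathcal R_p'(q+H)H_{qq}\) dominate the bounded \(\mathcal R_p''\) terms once \((q+H,p)\) is near \((0,0)\), and join the chamberwise monotonicity across the finite set \(E_p\) using only continuity of the first derivative; your explicit shrinking of \(\rho_0\) together with the parameter neighbourhood, via continuity of \(\tau\), is the ``after shrinking'' step the paper leaves implicit.
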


The exact divided-difference proofs are in \cref{app:passage}.  In the
curvature statement, the divergent term \(L_0H_{qq}\) dominates the bounded
transition terms; \cref{app:boundary-events} joins regular intervals through
continuity of the first derivative.

\subsection{The scalar class and its dynamical interpretation}

For \(q\in\R\), put \(q_+=\max\{q,0\}\).  Let \(p\) range near
\(0\in\R^m\), and set
\begin{equation}
  \alpha_\nu=1+\frac1\nu,
  \qquad
  \gamma_{n,\nu}=\frac{n}{\alpha_\nu}
  =\frac{n\nu}{\nu+1}.
  \label{eq:ramified-exponents}
\end{equation}

\begin{definition}[Order-\((n,\nu)\) ramified displacement]
\label[definition]{def:ramified-displacement}
Fix \(n\geq2\).  An order-\((n,\nu)\) ramified displacement is a germ
\begin{equation}
  \Delta(q,p)
  =
  S(q,p)+q_+^{\alpha_\nu}K(q_+^{1/\nu},q,p),
  \label{eq:full-displacement}
\end{equation}
where \(S\) is jointly \(C^{n+1}\), \(K(s,q,p)\) is jointly \(C^2\)
on its one-sided domain \(s\geq0\), and
\begin{equation}
  \begin{aligned}
    \partial_q^jS(0,0)&=0 &&(0\leq j<n),\\
    c&=\frac{\partial_q^nS(0,0)}{n!}\neq0,
    &d&=K(0,0,0)\neq0.
  \end{aligned}
  \label{eq:central-hypotheses}
\end{equation}
Define
\begin{equation}
  a_j(p)=\frac{\partial_q^jS(0,p)}{j!},
  \qquad
  A(p)=(a_0(p),\ldots,a_{n-1}(p)).
  \label{eq:coefficient-map}
\end{equation}
The sharpness condition is
\begin{equation}
  m\geq n,\qquad \rank DA(0)=n.
  \label{eq:rank-n}
\end{equation}
\end{definition}

The following is the scalar fixed-point version of the standard local
cyclicity of a limit-periodic set; see \cite{Roussarie1998}.
\begin{definition}[Local cyclicity]
\label[definition]{def:cyclicity}
Fix a representative of the displacement germ on
\(({-}\rho_*,\rho_*)\times V_*\), where \(V_*\) is a parameter
neighbourhood of the origin.  For an interval \(U\), let
\(N_{\mathrm{iso}}(\Delta(\cdot,p);U)\) denote the number of distinct
isolated real zeros of \(q\mapsto\Delta(q,p)\) in \(U\), with values in
\(\mathbb N_0\cup\{\infty\}\).  The local cyclicity is
\begin{equation}
  \Cyc_{(0,0)}(\Delta)
  =\inf_{\substack{0<\rho\leq\rho_*\\ 0\in V\subset V_*}}
  \ \sup_{p\in V}
  N_{\mathrm{iso}}\bigl(\Delta(\cdot,p);(-\rho,\rho)\bigr),
  \label{eq:formal-local-cyclicity}
\end{equation}
where \(V\) ranges over parameter neighbourhoods of the origin contained
in \(V_*\), and the infimum is taken in
\(\mathbb N_0\cup\{\infty\}\).  Equivalently, it is the
least eventual uniform bound as the state and parameter neighbourhoods
shrink.  A root at \(q=0\) is counted once.  When
\(P_p(q)=q+\Delta(q,p)\) is an actual first-return map, these zeros are
fixed points of \(P_p\) and therefore periodic orbits of the flow.
\end{definition}

\paragraph{Increasing section coordinates.}
Let the old coordinate \(q\) and a new coordinate \(\widetilde q\) be
related by a jointly smooth based local diffeomorphism
\begin{equation}
  q=\chi_p(\widetilde q),\qquad
  \chi_p(0)=0,\qquad a:=\partial_{\widetilde q}\chi_0(0)>0,
  \label{eq:increasing-section-coordinate}
\end{equation}
and conjugate the return map by
\(\widetilde P_p=\chi_p^{-1}\circ P_p\circ\chi_p\).  Conjugate the
smooth reference map \(P_p^-(q)=q+S(q,p)\) by the same change and use its
displacement as \(\widetilde S\).  If the central displacement has
coefficients \(c,d\) as in
\eqref{eq:central-hypotheses}, then the conjugate displacement has the
same orders \((n,\nu)\) and
\begin{equation}
  \widetilde c=c\,a^{n-1},
  \qquad
  \widetilde d=d\,a^{1/\nu}.
  \label{eq:section-coordinate-coefficient-law}
\end{equation}
Consequently \(\sgn(cd)\), local cyclicity, simplicity and stability of
fixed points, and the rank of the coefficient map are invariant.  If
\(P_p\) is an actual first-return map, then so is \(\widetilde P_p\).
Indeed, at the central parameter, substitute
\(q=a\widetilde q+O(\widetilde q^2)\) into the displacement and apply
\(\chi_0^{-1}\), whose derivative at the origin is \(a^{-1}\).  The
smooth term therefore acquires the factor \(a^{n-1}\), whereas the active
term acquires \(a^{1+1/\nu-1}=a^{1/\nu}\).  For jets of order below
\(n\), parameter-dependent terms from the conjugacy form a triangular
transformation with positive powers of \(a\) on the diagonal; hence the
coefficient rank is unchanged.  Conjugacy gives a bijection of fixed points, preserves
their multipliers, and changes only the coordinate used to record the same
first intersection.

Here \(S,c\) arise from the reference \(X^-\)-return, whereas \(K,d\)
record the active \(X^+\)-excursion.  Thus \(n\) is the neutral-return order
and \(\nu\) is the seam-contact order, as summarized in
\eqref{eq:intro-organizing-principle}.

The polynomial leading family used in the attainment argument is
\begin{equation}
  F_{\mathbf a}(q)
  =
  P_{\mathbf a}(q)+dq_+^{1+1/\nu},
  \qquad
  P_{\mathbf a}(q)=\sum_{j=0}^{n-1}a_jq^j+cq^n.
  \label{eq:canonical-family}
\end{equation}
This family provides the model used to establish attainment for the full
remainder class \eqref{eq:full-displacement}.

\paragraph{Fixed-stratum actual-return hypotheses \(\mathrm{(FR)}_\nu\).}
A family \(P_p\) satisfies \(\mathrm{(FR)}_\nu\) if the hypotheses of
\cref{prop:even-order-grazing-comparison} hold on a common state--parameter
germ with incoming coordinate \(q=\rho\), and a jointly smooth family of
local diffeomorphisms \(\mathcal R_p\) completes the passage by
\(P_p(q)=\mathcal R_p(q+\mathcal D_{\mathrm{gr}}(q,p))\).  Both factors must
be compatible finite-time flow maps along one itinerary, all intervening
sections must remain transverse, and no orbit may return to the incoming
section earlier.  Then \(P_p\) is the actual first-return map on the germ.
The local passage theorem establishes the dynamics within the selected
grazing box.  The common
itinerary, transverse intermediate sections, and no-earlier-return
condition are global hypotheses in \(\mathrm{(FR)}_\nu\); they are
verified explicitly for the closed family in \cref{sec:closed-realization}
for every even \(\nu\geq4\).

\begin{theorem}[Dynamics-to-Scalar Reduction Theorem]
\label{thm:dynamics-to-scalar-reduction}
\label{cor:fixed-order-circuit-recognition}
Let \(P_p\) satisfy \(\mathrm{(FR)}_\nu\), put
\[
  S(q,p)=\mathcal R_p(q)-q,
  \qquad L_0=(\mathcal R_0)'(0),
\]
and suppose
\[
  \partial_q^jS(0,0)=0\quad(0\leq j<n),\qquad
  c=\frac{\partial_q^nS(0,0)}{n!}\neq0,\qquad
  L_0\beta(0)\neq0.
\]
Then its displacement belongs to
\cref{def:ramified-displacement}, with \(d\) given by
\eqref{eq:composition-leading-coefficient}.  Consequently, its local
periodic-orbit cyclicity obeys \cref{thm:sharp-cyclicity}.  If the
coefficient map has rank \(n\), the applicable bound is attained, within
the same \(\mathrm{(FR)}_\nu\) family, by simple periodic orbits
with alternating stability.
\end{theorem}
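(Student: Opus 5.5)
The plan is to chain the two transmission results with the scalar cyclicity theorem, in three steps: membership in the scalar class, the cyclicity bound, and attainment.

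\emph{Step 1: membership.} Since \(\mathrm{(FR)}_\nu\) includes the hypotheses of \cref{prop:even-order-grazing-comparison} with incoming coordinate \(q=\rho\), the passage correction has the one-sided form \eqref{eq:grazing-transition} with leading coefficient \(\mathfrak g_\nu(p)\). The factorization \(P_p(q)=\mathcal R_p(q+\mathcal D_{\mathrm{gr}}(q,p))\) is exactly the composition \eqref{eq:reference-physical-composition}. So \cref{prop:reference-return-composition} gives the displacement \eqref{eq:geometric-full-ramified-form}. In that formula \(S=\mathcal R_p-\mathrm{id}\) is \(C^\infty\), hence \(C^{n+1}\), and \(K_\nu\) is jointly smooth, hence \(C^2\). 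Its value \(K_\nu(0,0,0)\) is given by \eqref{eq:composition-leading-coefficient}. This value is nonzero because \(L_0\beta(0)\neq0\) and \(\kappa_\nu(0)>0\). The vanishing of \(\partial_q^jS(0,0)\) for \(j<n\) and \(c\neq0\) are assumed. Therefore \eqref{eq:central-hypotheses} holds with \(d=K_\nu(0,0,0)\), and \(\Delta\) is an order-\((n,\nu)\) ramified displacement.

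\emph{Step 2: the bound.} The same proposition, under the global parts of \(\mathrm{(FR)}_\nu\) (common itinerary, transverse sections, no earlier return), identifies \(P_p\) as the actual first-return map. By \cref{def:cyclicity}, isolated zeros of \(\Delta(\cdot,p)\) near \(0\) are therefore exactly the periodic orbits near the central orbit. Applying \cref{thm:sharp-cyclicity} to this scalar germ gives the bound \(n\) or \(n+1\) according to \(\sgn(cd)\).

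\emph{Step 3: attainment.} Suppose the coefficient map \(A(p)\) of \eqref{eq:coefficient-map} has rank \(n\). Then the attainment part of \cref{thm:sharp-cyclicity} supplies parameters \(p\), arbitrarily close to \(0\), at which \(\Delta(\cdot,p)\) has the maximal number of simple zeros with alternating signs of \(\partial_q\Delta\).

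The key point, and the main obstacle, is that these parameters lie in the original family, so no new family needs to be constructed. The rank condition lets one choose \(p\) so that \(A(p)\) takes any prescribed small value. By the implicit function theorem, a smooth section \(\mathbf a\mapsto p(\mathbf a)\) of \(A\) restricts the \(\mathrm{(FR)}_\nu\) family to an \(n\)-parameter subfamily. The attainment argument of \cref{thm:sharp-cyclicity} should then be checked to run on exactly such a subfamily: the scaled model \eqref{eq:canonical-family} followed by a perturbation argument for the full remainder. If that theorem is stated only abstractly, I would redo its final step here. Rescale \(q=\varepsilon\widetilde q\) and choose \(\mathbf a=\varepsilon^{\gamma}\cdot\)(model coefficients) so that \(\Delta\) becomes a \(C^1\)-small perturbation of a scaled \(F_{\mathbf a}\) having simple zeros. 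Simple zeros persist under such perturbations, and the subfamily stays in \(\mathrm{(FR)}_\nu\) because that condition is imposed on a whole parameter germ.

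Finally, zeros that are simple for \(\Delta\) are fixed points of \(P_p\) with multiplier \(1+\partial_q\Delta\neq1\). Hence they are hyperbolic periodic orbits. Their stability alternates because the sign of \(\partial_q\Delta\) alternates at consecutive simple zeros of a continuous function. This holds across \(q=0\) as well, since \(\Delta\) is \(C^1\) there, the active term having exponent greater than \(1\).
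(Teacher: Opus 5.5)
Your proposal is correct and is essentially the paper's own proof. You get membership in the ramified class from the localized-passage and fixed-transmission propositions, with $d=K_\nu(0,0,0)\neq0$ because $L_0\beta(0)\neq0$ and $\kappa_\nu(0)>0$; you then apply the sharp-cyclicity theorem and the alternating-stability corollary, using the actual-first-return clause of $\mathrm{(FR)}_\nu$ to read fixed points as periodic orbits. The ``main obstacle'' you flag is already settled by the Rank Lifting theorem, whose path $p_t=\pi(\widehat a_0(t),\ldots,\widehat a_{n-1}(t))$ lies in the original parameter space, so your single-scale fallback is unnecessary; it would also fail as written, because the inactive roots sit at scale $t$ while the active root sits at the smaller scale $t^{n\nu/(\nu+1)}$.
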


\begin{proof}
\Cref{prop:even-order-grazing-comparison,%
prop:reference-return-composition} establish the full ramified form and its
nonzero leading coefficient.  Apply
\cref{thm:sharp-cyclicity,cor:stability}.  Because the maps are actual
finite-time first returns, their simple fixed points correspond to periodic
orbits.
\end{proof}

\section{A reusable cross-seam zero theorem}
\label{sec:mixed-seam}

The scalar obstruction used below is independent of its Poincar\'e-map
origin.  It couples a finite-order shape condition on one side of a marked
point to strict convexity on the other.  Agreement of the value and first
derivative at the seam sharpens the sum of the two one-sided bounds.

Fix \(\ell,r>0\).  Let
\begin{equation}
  f\in C^1([-\ell,r]),
  \qquad
  f|_{[-\ell,0]}\in C^n([-\ell,0]),
  \label{eq:abstract-regularity}
\end{equation}
and suppose that there are signs \(\eta,\sigma\in\{-1,1\}\) such that
\begin{align}
  \eta f^{(n)}(x)&>0
  &&(-\ell\leq x\leq0),
  \label{eq:left-shape}\\
  q&\longmapsto \sigma f'(q)
  &&\text{is strictly increasing on }[0,r].
  \label{eq:right-shape}
\end{align}
Write \(N_-(f)\) and \(N_+(f)\) for the numbers of distinct zeros in
\([-\ell,0)\) and \((0,r]\), and let \(z_0(f)=1\) if \(f(0)=0\) and
\(z_0(f)=0\) otherwise.  A zero at the seam is counted once, in accordance
with the distinct-root convention for this finite-regularity setting.

\begin{theorem}[Cross-Seam Hermite Zero Theorem]
\label{thm:cross-seam-hermite-zero}
\label{thm:mixed-seam}
Under \eqref{eq:abstract-regularity}--\eqref{eq:right-shape},
\begin{equation}
  N_-(f)+z_0(f)+N_+(f)
  \leq
  \begin{cases}
    n,&\eta\sigma=1,\\
    n+1,&\eta\sigma=-1.
  \end{cases}
  \label{eq:mixed-seam-bound}
\end{equation}
Every zero is isolated.
\end{theorem}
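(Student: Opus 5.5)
The proof will pass to the derivative \(g=f'\), count its zeros on each side of the seam, and then recover the zeros of \(f\) by Rolle's theorem. The left and right shape conditions each bound the zeros of \(g\) on one side. The seam enters only through continuity of \(g=f'\) at \(0\), which forces a sign incompatibility when \(\eta\sigma=1\).

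\emph{Step 1: one-sided counts for \(g\).} On \([-\ell,0]\) we have \(g\in C^{n-1}\) and \(\eta g^{(n-1)}>0\). By the generalized Rolle theorem, \(g\) has at most \(n-1\) zeros there, counted with multiplicity and in particular distinct. On \([0,r]\), \(\sigma g\) is strictly increasing, so \(g\) has at most one zero in \([0,r]\).

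\emph{Step 2: from \(g\) to \(f\).} Suppose \(f\) has \(Z=N_-+z_0+N_+\) distinct zeros in \([-\ell,r]\), with \(Z\) finite or a finite sub-collection of them. Rolle's theorem, which needs only \(f\in C^1\), produces \(Z-1\) distinct zeros of \(g\), one strictly between each pair of consecutive zeros of \(f\). Each of these lies either in \([-\ell,0]\) or in \((0,r]\). Step 1 then gives \(Z-1\leq (n-1)+1=n\), so \(Z\leq n+1\). Applying this bound to every finite sub-collection shows the zero set is finite, so every zero is isolated. This settles the case \(\eta\sigma=-1\).

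\emph{Step 3: excluding equality when \(\eta\sigma=1\).} This is the main obstacle. Equality \(Z=n+1\) would force \(g\) to have exactly \(n-1\) distinct zeros \(x_1<\dots<x_{n-1}\) in \([-\ell,0]\) and one zero in \((0,r]\). Since \(\sigma g\) is strictly increasing on \([0,r]\), a zero in \((0,r]\) requires \(\sigma g(0)<0\). In particular \(g(0)\neq0\), so all \(x_i<0\).

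I then apply the confluent divided-difference (Hermite) mean value theorem on the \(n\) nodes \(x_1,\dots,x_{n-1},0\), which is legitimate because \(g\in C^{n-1}[-\ell,0]\):
\[
  g[x_1,\dots,x_{n-1},0]=\frac{g^{(n-1)}(\xi)}{(n-1)!}
\]
for some \(\xi\in(-\ell,0)\). Because \(g(x_i)=0\), the divided difference reduces to \(g(0)/\prod_{i}(0-x_i)\), and the denominator is positive. Hence \(\eta g(0)>0\). Combined with \(\eta\sigma=1\), this gives \(\sigma g(0)>0\), contradicting \(\sigma g(0)<0\). Therefore \(Z\leq n\) when \(\eta\sigma=1\).

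A zero of \(f\) at \(0\) is already included as one of the \(Z\) points, consistent with the count-once convention. The only point needing care is that the Rolle zeros of \(g\) are assigned to the two sides without double counting at \(0\). This holds because \([-\ell,0]\) and \((0,r]\) are disjoint, and the one-zero bound from Step 1 was proved on the closed interval \([0,r]\), so it applies in particular to \((0,r]\).
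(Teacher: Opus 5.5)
Your proof is correct, and it takes a different and shorter route than the paper.

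\emph{The paper's route.} The paper works with \(f\) itself. It first records separate one-sided bounds: \(N_-+z_0\le n\) by repeated Rolle, and \(N_+\le 2\) (or \(\le 1\) when \(f(0)=0\)) from the monotonicity of \(\sigma f'\). It then excludes each configuration that could exceed the bound by a case analysis on whether \(f(0)=0\) and whether \(N_-=n\) or \(n-1\). This uses three seam-sign lemmas:
\begin{enumerate}
\item two active zeros force \(\sigma f(0)>0>\sigma f'(0)\);
\item \(n\) inactive zeros force \(\sgn f(0)=\sgn f'(0)=\eta\);
\item \(n-1\) inactive zeros force \(\sgn\bigl(f'(0)-Lf(0)\bigr)=\eta\), via the confluent Hermite interpolant carrying value and slope at the seam.
\end{enumerate}

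\emph{Your route.} You apply Rolle once across the seam, which needs only \(f\in C^1\), and so reduce everything to \(f'\). This function has at most \(n-1\) zeros on \([-\ell,0]\) and at most one on \((0,r]\). That gives \(Z\le n+1\) for either sign with no case split. When \(\eta\sigma=1\), all three problematic configurations (seam root, \(N_-=n\), and \(N_-=n-1\) with two active roots) collapse into a single contradiction. Only the sign of \(f'(0)\) is needed, and it comes from an ordinary divided difference of \(f'\) at the \(n\) distinct nodes \(x_1,\dots,x_{n-1},0\). Strictly speaking no confluent node is involved: differentiating first is exactly what removes it, so ``confluent'' in your Step~3 is a mislabel, not an error. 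Your finiteness argument also yields isolation directly, with the explicit bound \(n+1\) on the zero set, in place of the paper's exclusion of zero intervals.

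\emph{What each buys.} Yours is more economical. The paper's case analysis records finer seam information, namely the signs of \(f(0)\), \(f'(0)\) and \(f'(0)-Lf(0)\) attached to each root configuration. That is the cross-seam Hermite coupling the paper emphasizes, and it is closer to what a classification of left/seam/right root distributions would require.
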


\paragraph{Coupling at the seam.}
Separate Rolle estimates give upper bounds of \(n\) zeros on the closed left
branch and two on the open right branch.  The matching conditions prevent
independent saturation of these bounds.  The key coupled calculation is the
following confluent Hermite obstruction.  If
\(x_1<\cdots<x_{n-1}<0\) are left-hand zeros and
\[
  a=f(0),\qquad b=f'(0),\qquad
  L=\sum_{i=1}^{n-1}\frac1{-x_i},
\]
then
\begin{equation}
  \sgn(b-La)=\eta.
  \label{eq:confluent-sign}
\end{equation}
Indeed, let \(H\) be the degree-\(n\) Hermite interpolant through those
zeros and through the value and derivative at the seam.  Its leading
coefficient is
\[
  \frac{b-La}{\prod_i(-x_i)}.
\]
Repeated Rolle identifies this coefficient with \(f^{(n)}(\xi)/n!\) for some
\(\xi\in(x_1,0)\).
This calculation provides the cross-seam coupling.  The remaining
sign cases compare \eqref{eq:confluent-sign} with the value and slope forced
by one or two active-side zeros.  Appendix~\ref{app:uniform-hermite} gives
the finite-regularity divided-difference lemmas and the complete proof.

The following form applies to transverse grazing, where the seam in the
section coordinate moves with parameters.

\begin{corollary}[Moving-threshold cyclicity]
\label[corollary]{cor:moving-threshold-cyclicity}
Let \(F(q,\lambda)\) be a family on a fixed interval \((-\rho,\rho)\), and
let \(\tau(\lambda)\in(-\rho/2,\rho/2)\) be continuous.  Suppose that for
every sufficiently small \(\lambda\):
\begin{enumerate}
\item \(F(\,\cdot\,,\lambda)\) is \(C^1\) across \(q=\tau(\lambda)\)
  and is \(C^n\) on the inactive side;
\item a fixed sign \(\eta\) satisfies
  \(\eta\partial_q^nF>0\) for \(q\leq\tau(\lambda)\);
\item for a fixed sign \(\sigma\), the function
  \(q\mapsto\sigma\partial_qF(q,\lambda)\) is strictly increasing for
  \(q\geq\tau(\lambda)\).
\end{enumerate}
Then the number of distinct zeros in \((-\rho,\rho)\) is at most \(n\) if
\(\eta\sigma=1\) and at most \(n+1\) if \(\eta\sigma=-1\).  Every such
zero is isolated.  The conclusion is uniform over the stated parameter
neighbourhood.
\end{corollary}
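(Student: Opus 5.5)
The plan is to reduce the corollary to the Cross-Seam Hermite Zero Theorem (\cref{thm:cross-seam-hermite-zero}) by a translation that moves the threshold to the origin, applied one parameter value at a time. Fix a sufficiently small \(\lambda\). Set
\[
  f(x)=F(x+\tau(\lambda),\lambda),\qquad
  \ell=\tau(\lambda)+\rho-\epsilon,\qquad
  r=\rho-\tau(\lambda)-\epsilon,
\]
where \(\epsilon>0\) is small. Since \(|\tau(\lambda)|<\rho/2\), both \(\ell\) and \(r\) exceed \(\rho/2-\epsilon>0\).

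Hypothesis (1) gives \(f\in C^1([-\ell,r])\) and \(f|_{[-\ell,0]}\in C^n\). Hypothesis (2) gives \eqref{eq:left-shape} with the same \(\eta\). Hypothesis (3) gives \eqref{eq:right-shape} with the same \(\sigma\).

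\cref{thm:cross-seam-hermite-zero} then bounds the number of distinct zeros of \(f\) in \([-\ell,r]\) by \(n\) or \(n+1\), according to \(\eta\sigma\), and says every such zero is isolated. Translating back, this counts the distinct zeros of \(F(\cdot,\lambda)\) in the closed interval \([-\rho+\epsilon,\rho-\epsilon]\).

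The one genuine issue is that the theorem is stated on compact intervals, whereas the corollary concerns the open interval \((-\rho,\rho)\). I plan to handle this by exhaustion. Any finite collection of distinct zeros in \((-\rho,\rho)\) lies in some \([-\rho+\epsilon,\rho-\epsilon]\). The bound there does not depend on \(\epsilon\), so it bounds the total count on the open interval. This also rules out infinitely many zeros, since any \(n+2\) of them would already lie in one such compact interval. Isolation of each zero follows by applying the compact result on an interval containing that zero in its interior.

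A second point is regularity up to the endpoints. The hypotheses hold on all of \((-\rho,\rho)\), so \(f\) is \(C^n\) up to \(-\ell\) and \(C^1\) up to \(r\), and the strict inequalities and strict monotonicity hold on the closed subintervals. The translation by \(\tau(\lambda)\) needs no regularity of \(\tau\) in \(\lambda\), because the argument is pointwise in \(\lambda\). Continuity of \(\tau\) and the bound \(|\tau|<\rho/2\) are used only to guarantee a nondegenerate interval on each side of the threshold.

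Uniformity holds because \(n\), \(\eta\) and \(\sigma\) are fixed over the parameter neighbourhood, so the bound is the same for every \(\lambda\) in it. I expect no real obstacle. The only care needed is the bookkeeping at the moving threshold: a zero at \(q=\tau(\lambda)\) must be counted once, through \(z_0(f)\), and the case where \(\tau(\lambda)\) lies near \(\pm\rho/2\) must still leave room on both sides, which the choice of \(\epsilon<\rho/2\) secures.
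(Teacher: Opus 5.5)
Your proposal is correct and follows the paper's proof: for each fixed \(\lambda\), translate by \(\tau(\lambda)\), apply the Cross-Seam Hermite Zero Theorem, and get uniformity from the fixed data \(n,\eta,\sigma\). Your compact-exhaustion step adds one detail the paper's one-line proof leaves implicit, namely passing from the closed intervals in the theorem to the open interval \((-\rho,\rho)\).
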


\begin{proof}
For each \(\lambda\), translate \(q\) by \(\tau(\lambda)\) and apply
\cref{thm:cross-seam-hermite-zero} on the resulting two-sided interval.
The same \(\rho\), parameter neighbourhood, and signs give the uniform
count.
\end{proof}

\subsection{The ramified class satisfies the two shape hypotheses}

For the full remainder in \cref{def:ramified-displacement}, the required
active-side convexity is most transparent in the ramified coordinate
\(s=q^{1/\nu}\).  The assumption \(K\in C^2\) supplies the uniform
first-derivative control used below.

\begin{proposition}[Uniform one-sided shape control]
\label[proposition]{prop:shape-control}
For every displacement in \cref{def:ramified-displacement}, there are
\(\rho>0\) and a parameter neighbourhood \(V\) such that
\begin{align}
  \sgn\partial_q^nS(q,p)&=\sgn c
  &&(-\rho\leq q\leq0,\ p\in V),
  \label{eq:negative-uniform-sign}\\
  \sgn\partial_s\!\left[\Delta_q(s^\nu,p)\right]&=\sgn d
  &&(0\leq s\leq\rho^{1/\nu},\ p\in V).
  \label{eq:positive-uniform-sign}
\end{align}
At \(s=0\), the derivative in \eqref{eq:positive-uniform-sign} is the
right derivative.
\end{proposition}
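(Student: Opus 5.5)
The plan is to treat the two sign statements separately, since they rest on different regularity. Both come from continuity of a nonvanishing quantity at the central point, followed by shrinking the state and parameter neighbourhoods.

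For \eqref{eq:negative-uniform-sign}, observe that \(S\) is jointly \(C^{n+1}\), so \(\partial_q^nS\) is jointly continuous. Moreover \(\partial_q^nS(0,0)=n!\,c\neq0\). Continuity then gives \(\rho_1>0\) and a neighbourhood \(V_1\) with \(\sgn\partial_q^nS=\sgn c\) on \([-\rho_1,\rho_1]\times V_1\). This region includes the closed left branch.

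For \eqref{eq:positive-uniform-sign}, I would work in the ramified coordinate. Set \(q=s^\nu\) with \(s\geq0\), so that \(q_+=s^\nu\), \(q_+^{1/\nu}=s\) and \(q_+^{\alpha_\nu}=s^{\nu+1}\). For \(s>0\), differentiate \eqref{eq:full-displacement} in \(q\):
\[
  \Delta_q(q,p)=S_q(q,p)+\alpha_\nu q^{1/\nu}K+q^{\alpha_\nu}\bigl(\tfrac1\nu q^{1/\nu-1}K_s+K_q\bigr),
\]
with \(K\) evaluated at \((s,s^\nu,p)\). In terms of \(s\) this becomes
\[
  \Delta_q(s^\nu,p)=S_q(s^\nu,p)+\alpha_\nu sK+\tfrac1\nu s^2K_s+s^{\nu+1}K_q .
\]
This expression is \(C^1\) in \(s\) up to \(s=0\), because \(K\in C^2\), and its value at \(s=0\) is the one-sided limit. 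Differentiating in \(s\) gives
\[
  \partial_s[\Delta_q(s^\nu,p)]=\nu s^{\nu-1}S_{qq}(s^\nu,p)+\alpha_\nu K(s,s^\nu,p)+s\,E(s,p).
\]
Here \(E\) collects terms in which every factor is bounded. Its constituents are \(K_s\), \(K_q\), \(K_{ss}\), \(K_{sq}\) and \(K_{qq}\), evaluated along \((s,s^\nu,p)\) and multiplied by polynomial weights in \(s\). The term \(K_{qq}\) enters only through \(s^{\nu+1}K_q\), whose \(s\)-derivative is \(O(s^\nu)\). Since \(K\) is jointly \(C^2\) on the one-sided domain, these quantities are bounded on a compact neighbourhood.

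The dominant term at \(s=0\) is \(\alpha_\nu K(0,0,p)\), which tends to \(\alpha_\nu d\neq0\) as \(p\to0\). The other two terms are \(O(s^{\nu-1})\) and \(O(s)\) uniformly in \(p\); the first is small because \(\nu\geq2\) and \(S_{qq}\) is bounded. Choosing \(\rho_2\) and \(V_2\) small therefore fixes the sign of the derivative as \(\sgn d\) for \(0\le s\le\rho_2^{1/\nu}\). At \(s=0\) the computation is the right derivative: the \(s\)-derivative of \(\Delta_q(s^\nu,p)\), extended as above, is continuous at \(s=0\).

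Finally, take \(\rho=\min(\rho_1,\rho_2)\) and \(V=V_1\cap V_2\).

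The main obstacle is bookkeeping rather than anything conceptual. One must confirm that every remainder term in the \(s\)-derivative carries a positive power of \(s\), or comes from a bounded second derivative of \(K\) times such a power, so that only \(C^2\) regularity of \(K\) is used. Care is also needed with the chain-rule factors \(q^{1/\nu-1}\), which is why the proof works in \(s\) rather than \(q\). This is exactly where the \(C^2\) hypothesis on \(K\) in \cref{def:ramified-displacement} is consumed.
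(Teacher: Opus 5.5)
Your proposal is correct and follows essentially the paper's proof. You get the inactive sign from continuity of \(\partial_q^nS\). On the active side you use the explicit formula \(\Delta_q(s^\nu,p)=S_q(s^\nu,p)+\frac{\nu+1}{\nu}sK+\frac1\nu s^2K_s+s^{\nu+1}K_q\), whose \(s\)-derivative is continuous up to the origin with value \(\frac{\nu+1}{\nu}d\) there. The paper simply packages your chain-rule terms into \(\widehat K(s,p)=K(s,s^\nu,p)\). It also states explicitly that \(\alpha_\nu>1\) gives \(\Delta_q(0,p)=S_q(0,p)\), which is the fact behind your ``one-sided limit'' remark.
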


The calculation is recorded in Appendix~\ref{app:uniform-hermite}.  Its
decisive limit is
\[
  \partial_s\!\left[\Delta_q(s^\nu,p)\right]_{(s,p)=(0,0)}
  =\frac{\nu+1}{\nu}d.
\]
Thus the active derivative \(\sgn(d)\Delta_q\) is strictly increasing.
On the inactive side, \(\sgn(c)S^{(n)}>0\).  The Cross-Seam Hermite Zero
Theorem therefore applies directly to the full class of permitted smooth
remainders.

\section{Sharp fixed-stratum cyclicity}
\label{sec:sharpness}

We now apply the scalar theorem to the return class.  The upper bound is a
shape statement for the entire allowed remainder class.  Its attainment is
a separate rank-lifting statement, and it produces roots on two genuinely
different scales.

\begin{theorem}[Sharp fixed-stratum cyclicity]
\label{thm:sharp-cyclicity}
Every displacement in \cref{def:ramified-displacement} satisfies
\begin{equation}
  \Cyc_{(0,0)}(\Delta)
  \leq
  \begin{cases}
    n,&cd>0,\\
    n+1,&cd<0.
  \end{cases}
  \label{eq:sharp-cyclicity}
\end{equation}
The bound counts each isolated root once, including a multiple root or a
root at the seam, and all local zeros are isolated.  If
\eqref{eq:rank-n} holds, then the applicable bound is attained by simple
roots.  More precisely, for every \(0<r_1<\cdots<r_n\) there is a
parameter path \(p_t\to0\) with \(n\) negative roots
\begin{equation}
  q_j(t)=t(-r_j+o(1)),
  \qquad 1\leq j\leq n.
  \label{eq:negative-sharp-roots}
\end{equation}
If \(cd<0\), the same path also has a positive root
\begin{equation}
  q_{\mathrm{ram}}(t)
  \sim
  \left(
    \left|\frac cd\right|\prod_{j=1}^n r_j
  \right)^{\nu/(\nu+1)}
  t^{n\nu/(\nu+1)}.
  \label{eq:positive-sharp-root}
\end{equation}
\end{theorem}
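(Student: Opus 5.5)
The plan is to split the theorem into an upper bound and an attainment part. Upper bound first. Apply \cref{prop:shape-control} to obtain \(\rho>0\) and a parameter neighbourhood \(V\) on which two shape conditions hold uniformly. On the closed inactive side \([-\rho,0]\), \(\Delta=S\), so \(\sgn\partial_q^n\Delta=\sgn c\). On the active side, \(\partial_s[\Delta_q(s^\nu,p)]\) has the sign of \(d\). Since \(s\mapsto s^\nu\) is a strictly increasing homeomorphism of \([0,\rho^{1/\nu}]\) onto \([0,\rho]\), the function \(q\mapsto\sgn(d)\Delta_q(q,p)\) is strictly increasing on \([0,\rho]\).

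Regularity comes next. The ramified term \(q_+^{1+1/\nu}K\) has vanishing value and vanishing \(q\)-derivative at \(q=0^+\), because \(K\in C^2\) and \(1+1/\nu>1\). Hence \(\Delta(\cdot,p)\in C^1([-\rho,\rho])\), and its restriction to the left branch is \(C^n\), indeed \(C^{n+1}\).

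These are exactly the hypotheses \eqref{eq:abstract-regularity}--\eqref{eq:right-shape} of \cref{thm:cross-seam-hermite-zero}, with \(\eta=\sgn c\) and \(\sigma=\sgn d\). The theorem gives at most \(n\) or \(n+1\) distinct zeros in \([-\rho,\rho]\), according to \(\sgn(cd)\), for every \(p\in V\), and it also shows that every zero is isolated. The bound is uniform in \(p\), so the infimum in \eqref{eq:formal-local-cyclicity} is at most this number. This proves \eqref{eq:sharp-cyclicity}, including the convention that multiple roots and seam roots count once.

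Attainment uses the rank condition \eqref{eq:rank-n}. By the implicit function theorem, after a local diffeomorphism in \(p\) the coefficients \(a_0,\dots,a_{n-1}\) become independent coordinates, and the remaining parameters are frozen at \(0\). Fix \(0<r_1<\dots<r_n\) and set
\[
  \Pi(x)=c\prod_{j=1}^n(x+r_j)=c\,x^n+\sum_{j<n}\pi_jx^j .
\]
Choose the path \(a_j(p_t)=\pi_jt^{n-j}\).

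On the negative side, rescale \(q=tx\). Then \(t^{-n}\Delta(tx,p_t)=\Pi(x)+o(1)\) in \(C^1\) on compact subsets of \(x<0\). Here one must check that the jets of \(S\) of order \(\ge n\) at \(p_t\) differ from their central values by \(O(t)\), which holds because \(p_t=O(t)\) and \(S\in C^{n+1}\). The simple roots \(-r_j\) of \(\Pi\) then persist as roots \(q_j(t)=t(-r_j+o(1))\), and these are simple roots of \(\Delta\).

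Now suppose \(cd<0\). On the positive side, for \(q\gg t\) the dominant smooth term is \(a_0=\pi_0t^n=c\prod_j r_j\,t^n\). The active term is \(dq^{1+1/\nu}\), of the opposite sign to \(a_0\). Balancing the two at the scale \(q=t^{n\nu/(\nu+1)}y\), which is much larger than \(t\), gives
\[
  t^{-n}\Delta=c\prod_j r_j+d\,y^{1+1/\nu}+o(1),
\]
because the terms \(a_jq^j\) with \(j\ge1\) are \(t^{n-j}q^j\), which is \(o(t^n)\) since \(q/t\to\infty\) as long as \(q=o(1)\). The simple root \(y_*=|c\prod r_j/d|^{\nu/(\nu+1)}\) persists and yields \eqref{eq:positive-sharp-root}. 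The root count is then \(n+1\).

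In the case \(cd>0\), the \(n\) negative roots already attain the bound. Alternating stability, if wanted, follows from simplicity and the sign changes of \(\Delta\).

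The main obstacle is uniformity of the positive-scale expansion. The intermediate region \(t\lesssim q\ll t^{n\nu/(\nu+1)}\) must contain no spurious roots, and the \(o(1)\) errors must be uniform on compact \(y\)-sets, including derivative control so that simplicity holds. For the spurious roots I would simply invoke the upper bound: once \(n+1\) simple roots are exhibited, no others can exist. The remaining work is to bound the rescaled errors using \(K\in C^2\) and the \(C^{n+1}\) Taylor remainder of \(S\). I expect this to be routine but to require care in tracking the powers of \(t\).
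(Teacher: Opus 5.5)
Your upper bound and your attainment scheme follow the paper. The upper bound uses uniform shape control and then the Cross-Seam theorem with \(\eta=\sgn c\), \(\sigma=\sgn d\). Attainment uses a right inverse of the coefficient map, the path \(a_j(p_t)=\pi_jt^{n-j}\), and \(C^1\) convergence of \(t^{-n}\Delta(tx,p_t)\) to \(\Pi\) on the inactive side. Those parts are sound.

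The positive-root step, however, rests on a reversed comparison of scales, and the estimate you invoke there is false. Since \(n\geq2\) and \(\nu\geq2\), we have \(\gamma_{n,\nu}=n\nu/(\nu+1)>1\) (equivalently \(\nu(n-1)>1\)). So \(q=t^{\gamma_{n,\nu}}y\) is much \emph{smaller} than \(t\), not larger. Under your premise \(q/t\to\infty\), the discarded terms satisfy \(t^{-n}\pi_jt^{n-j}q^j=\pi_j(q/t)^j\to\infty\), and \(t^{-n}cq^n\to\infty\) as well. Indeed, for \(t\ll q\ll1\) the constant \(a_0=O(t^n)\) is negligible against both \(cq^n\) and \(dq^{1+1/\nu}\), because \(q^{1+1/\nu}\gg t^{1+1/\nu}\geq t^n\). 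So no balance between \(a_0\) and the active term can occur in that range. As written, the limit \(c\prod_jr_j+dy^{1+1/\nu}\) does not follow from your justification. Your ``intermediate region'' \(t\lesssim q\ll t^{n\nu/(\nu+1)}\) is empty for small \(t\).

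The repair is to use \(q/t=t^{\gamma_{n,\nu}-1}y\to0\). Then \(t^{-n}\widehat a_j(t)q^j=\pi_jt^{j(\gamma_{n,\nu}-1)}y^j\to0\) for \(1\leq j<n\), and the \(q^n\) term and the Taylor remainder vanish likewise. The identity \(\gamma_{n,\nu}(1+1/\nu)=n\) gives
\(t^{-n}q^{1+1/\nu}K(q^{1/\nu},q,p_t)=y^{1+1/\nu}K(t^{n/(\nu+1)}y^{1/\nu},t^{\gamma_{n,\nu}}y,p_t)\to dy^{1+1/\nu}\).

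For simplicity of the root you also need the derivative. Put \(F_t(y)=t^{-n}\Delta(t^{\gamma_{n,\nu}}y,p_t)\), so that \(F_t'(y)=t^{\gamma_{n,\nu}-n}\Delta_q(t^{\gamma_{n,\nu}}y,p_t)\). The smooth contributions vanish for the same reason; for instance \(t^{\gamma_{n,\nu}-n}a_1(p_t)=\pi_1t^{\gamma_{n,\nu}-1}\). The chain rule with \(K\in C^2\) then gives \(F_t'\to(1+1/\nu)dy^{1/\nu}\) uniformly on compact subsets of \((0,\infty)\). This is the paper's argument.

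No exclusion of spurious roots is needed for attainment. The theorem only asks you to exhibit the \(n\) negative roots and, when \(cd<0\), the one positive root, and these are automatically distinct.
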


\begin{remark}[Scope of the sharpness statement]
Establishing the universal total bound, constructing an attaining family,
and classifying all possible left/seam/right root distributions are three
distinct tasks.  The theorem completes the first and, under the rank
condition, the second.  Its witness has distribution \((n,0)\) when \(cd>0\) and
\((n,1)\) when \(cd<0\), where the entries count negative and positive
roots.  Other distributions may realize the same total; their classification
is a separate discriminant problem.
\end{remark}

\begin{remark}[Two inequivalent quartic chambers]
\label[remark]{rem:quartic-chambers}
The distributions \((N_-,N_+)=(1,2)\) and \((2,1)\) both occur when
\(n=2\) and \(cd<0\).  In the canonical family
\[
  F(q)=a_0+a_1q+q^2-q_+^{5/4},
\]
take \(a_1=\varepsilon\) and
\(a_0=\kappa\varepsilon^5\).  On the active scale
\(q=\varepsilon^4y^4\),
\[
  \varepsilon^{-5}F(\varepsilon^4y^4)
  =\kappa+y^4-y^5+\varepsilon^3y^8.
\]
For \(-256/3125<\kappa<0\), the limiting function has two simple
positive roots, while the inactive quadratic has one negative root.  For
small \(\kappa>0\), the limiting function has one positive root and the
inactive quadratic has two negative roots.  Simplicity preserves both
distributions for all sufficiently small \(\varepsilon>0\).  This example
illustrates the distinction between the coupled total bound and a complete
chamber classification.
\end{remark}

\paragraph{Upper-bound mechanism.}
By \cref{prop:shape-control}, the inactive branch satisfies
\(\sgn(c)\partial_q^n\Delta>0\).  On the active branch, the signed
derivative \(\sgn(d)\Delta_q\) is strictly increasing.
The Cross-Seam Hermite Zero Theorem therefore gives
\eqref{eq:sharp-cyclicity} with the sign \(\sgn(cd)\), uniformly in the
parameters.  The argument applies to the full \(C^2\) ramified remainder.

\subsection{Rank lifting and separated scales}

\begin{theorem}[Rank Lifting and Separated-Scale Root Realization]
\label{thm:rank-lifting-root-realization}
\label{lem:negative-root-persistence}
\label{lem:positive-root}
Assume \eqref{eq:rank-n}.  Fix \(0<r_1<\cdots<r_n\) and set
\begin{equation}
  \mathcal P_{n,t}(q)
  =c\prod_{j=1}^n(q+r_jt)
  =\sum_{j=0}^n\widehat a_j(t)q^j.
  \label{eq:sharp-polynomial}
\end{equation}
There is a lifted path \(p_t=O(t)\) satisfying
\(A(p_t)=(\widehat a_0(t),\ldots,\widehat a_{n-1}(t))\).  Along it,
the full displacement has \(n\) simple negative roots satisfying
\begin{equation}
  q_j(t)=t(-r_j+o(1)),
  \qquad
  q_n(t)<\cdots<q_1(t)<0.
  \label{eq:persistent-negative-roots}
\end{equation}
If \(cd<0\), it also has a simple positive root satisfying
\eqref{eq:positive-sharp-root}, and
\begin{equation}
  \sgn\Delta_q(q_{\mathrm{ram}}(t),p_t)=\sgn d.
  \label{eq:positive-root-derivative-sign}
\end{equation}
\end{theorem}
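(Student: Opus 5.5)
The plan has three steps: lift the coefficient path by the rank condition, rescale on the inactive scale $q=t\,y$ to obtain the $n$ negative roots, and rescale on the slower active scale to obtain the ramified positive root.

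\textbf{Step 1: lifting the path.} By \eqref{eq:rank-n}, $DA(0)$ has rank $n$. The implicit function theorem therefore gives a smooth right inverse $\Psi$ with $A(\Psi(\mathbf a))=\mathbf a$ and $\Psi(0)=0$, defined on a neighbourhood of $0\in\R^n$. Each coefficient satisfies $\widehat a_j(t)=c\,e_{n-j}(r)\,t^{n-j}=O(t)$ for $j<n$. Setting $p_t=\Psi(\widehat a(t))$ then gives $p_t=O(t)$. Since $a_j$ is only $C^{n+1-j}\supseteq C^1$, this is enough.

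\textbf{Step 2: negative roots on the inactive scale.} For $q<0$ we have $\Delta=S$. Put $q=t\,y$ and Taylor expand $S(q,p_t)=\sum_{j\le n}a_j(p_t)q^j+O(|q|^{n+1})$. The coefficient $a_n(p_t)=c+O(t)$, and $a_j(p_t)=\widehat a_j(t)$ exactly for $j<n$. Hence
\[
  t^{-n}S(ty,p_t)=c\prod_j(y+r_j)+O(t),
\]
uniformly in $C^1$ on compact $y$-sets. The limit has simple roots at $y=-r_j<0$, so the implicit function theorem (or Rouché-type persistence of simple roots under $C^1$-small perturbation) yields roots $q_j(t)=t(-r_j+o(1))$. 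These roots are simple and ordered as in \eqref{eq:persistent-negative-roots}. The cap term vanishes for $q<0$, so no interference arises there.

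\textbf{Step 3: the positive root when $cd<0$.} On $q>0$, the smooth part is dominated near the origin by its constant term $\widehat a_0(t)=c\,t^n\prod r_j$. Balancing this against $d\,q^{1+1/\nu}$ suggests the scale $q=t^{\gamma}x$ with $\gamma=n\nu/(\nu+1)=\gamma_{n,\nu}$. Since $\gamma<n$ but $\gamma>n-1\ge1$ when $n\ge2$, one checks the remaining terms. For $j\ge1$,
\[
  \widehat a_j(t)q^j=O\bigl(t^{\,n-j+j\gamma}\bigr)=o(t^n),
\]
because $j\gamma>j$. The $q^n$ term is $O(t^{n\gamma})=o(t^n)$. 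In the cap term, $K(q_+^{1/\nu},q,p_t)=d+O(t^{\gamma/\nu})+O(t)$. Therefore
\[
  t^{-n}\Delta(t^\gamma x,p_t)=c\prod r_j+d\,x^{1+1/\nu}+o(1)
\]
in $C^1$ on compact subsets of $x>0$. The derivative estimate uses $K\in C^2$, together with derivatives of the smooth terms scaled by $t^{\gamma}$. When $cd<0$, the limit has the unique simple positive root $x_*=(|c/d|\prod r_j)^{\nu/(\nu+1)}$, and persistence gives \eqref{eq:positive-sharp-root}.

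The derivative sign \eqref{eq:positive-root-derivative-sign} follows because $\partial_x$ of the limit at $x_*$ equals $\alpha_\nu d\,x_*^{1/\nu}$, which has sign $\sgn d$. Multiplying back by the positive factor $t^{n-\gamma}$ gives $\Delta_q=t^{n-\gamma}(\cdots)$ and preserves that sign.

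The main obstacle is the uniformity of Step 3. I must make sure the Taylor remainder of $S$ and the error in $K$ are $o(t^n)$ in $C^1$ after rescaling. For $S$, I would use the $C^{n+1}$ bound $|\partial_q^j S(q,p)-\sum\cdots|$ together with $p_t=O(t)$. The delicate point is that $a_j(p_t)$ for $j<n$ are matched exactly, while terms like $\partial_q^{j}S(0,p_t)$ for $j\ge n$ are only $O(1)$. This is fine since $t^{j\gamma}=o(t^n)$ for $j\ge n\ge2$. Finally, both sets of roots stay inside any prescribed neighbourhood $(-\rho,\rho)$ as $t\to0$, which gives attainment of the counts $n$ and $n+1$.
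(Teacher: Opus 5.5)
Your proposal is correct and follows essentially the same route as the paper's proof: a right inverse of the rank-\(n\) coefficient map lifts the path, \(C^1\) convergence on the smooth scale \(q=tx\) gives the \(n\) negative roots, and convergence on compact subsets of \((0,\infty)\) at the active scale \(q=t^{\gamma_{n,\nu}}y\), with the factor \(t^{\gamma_{n,\nu}-n}\), gives the ramified root and its derivative sign. One side remark is false but not load-bearing: \(\gamma_{n,\nu}>n-1\) holds only when \(n\leq\nu\) (it fails, for example, at \(n=4,\nu=2\)); your term-by-term estimates use only \(\gamma_{n,\nu}>1\), which holds for all \(n,\nu\geq2\).
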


The contact passage produces the exponent \(\alpha_\nu=1+1/\nu\), and the
theorem treats the full remainder class of
\cref{def:ramified-displacement}.  At the smooth scale \(q=tx\), the inactive
part converges in \(C^1\) to \(c\prod_j(x+r_j)\).  At the smaller active
scale
\[
  q=t^{\gamma_{n,\nu}}y,
  \qquad \gamma_{n,\nu}=\frac{n\nu}{\nu+1}>1,
\]
the rescaled displacement converges in \(C^1_{\mathrm{loc}}((0,\infty))\)
to
\begin{equation}
  c\prod_{j=1}^nr_j+d y^{\alpha_\nu}.
  \label{eq:positive-limit-function}
\end{equation}
These two limits explain both root persistence and scale separation.  The
uniform Taylor estimates, including positive-side convergence on compact
subsets bounded away from \(y=0\), are proved in
Appendix~\ref{app:uniform-hermite}.

\begin{proof}[Proof of \cref{thm:sharp-cyclicity}]
The preceding upper-bound mechanism proves the universal estimate.  Apply
\cref{thm:rank-lifting-root-realization}: for \(cd>0\), its \(n\) negative
roots attain the bound; for \(cd<0\), the additional active root attains
the \(n+1\) bound.
\end{proof}

\subsection{Periodic-orbit stability}

\begin{corollary}[Alternating stability of the attaining orbit family]
\label[corollary]{cor:stability}
Suppose \(P(q,p)=q+\Delta(q,p)\) is an actual Poincar\'e return.  After
shrinking the germ, a simple fixed point \(q_*\) is attracting if and only
if \(\Delta_q(q_*,p)<0\).  The simple fixed points constructed in
\cref{thm:rank-lifting-root-realization}, ordered along the real axis,
alternate between attracting and repelling.
\end{corollary}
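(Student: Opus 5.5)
The plan has two parts: a local criterion for stability of a simple fixed point, then a sign computation at the roots built in \cref{thm:rank-lifting-root-realization}.

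\emph{Stability criterion.} For a one-dimensional return map \(P(q,p)=q+\Delta(q,p)\), a simple fixed point \(q_*\) has multiplier \(P_q=1+\Delta_q(q_*,p)\). Two facts make this usable.
\begin{enumerate}
\item After shrinking the germ, \(|\Delta_q|<1\) uniformly. On the inactive side \(\Delta_q=S_q\to S_q(0,0)=0\). On the active side \(\Delta_q=O(q_+^{1/\nu})\), by the ramified form \eqref{eq:full-displacement} together with \(K\in C^2\). Hence \(P_q\in(0,2)\), so \(|P_q|<1\) holds if and only if \(\Delta_q<0\).
\item \(\Delta\) is \(C^1\) across the seam and \(\Delta_q\neq0\) at a simple root. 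The multiplier criterion therefore holds for roots on either side, and also for a root at \(q=0\).
\end{enumerate}
Hyperbolic attraction or repulsion of the fixed point is then equivalent to attraction or repulsion of the corresponding periodic orbit, because \(P\) is an actual first return.

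\emph{Signs at the negative roots.} By \cref{thm:rank-lifting-root-realization}, the inactive part converges in \(C^1\) at the scale \(q=tx\):
\[
t^{-n}\Delta(tx,p_t)\to c\prod_j(x+r_j).
\]
The roots of the limit are \(x=-r_j\), all simple, so the limit has alternating derivative signs there. Write \(\mathcal P_n(x)=c\prod_j(x+r_j)\); its derivative at \(x=-r_j\) has sign \(\sgn(c)(-1)^{j-1}\). By \(C^1\) convergence, \(\Delta_q(q_j(t),p_t)\) has this sign for small \(t\). Since \(q_n<\dots<q_1\), the signs alternate along the real axis, and the rightmost negative root \(q_1\) carries \(\sgn c\).

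\emph{The extra root when \(cd<0\).} There is one further positive root \(q_{\mathrm{ram}}\), which lies to the right of all the \(q_j\). By \eqref{eq:positive-root-derivative-sign}, \(\sgn\Delta_q(q_{\mathrm{ram}})=\sgn d=-\sgn c\). This is opposite to the sign at \(q_1\), so the alternation continues through \(q_{\mathrm{ram}}\).

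A more structural check is also available. Between consecutive simple zeros of a \(C^1\) function with no zeros in between, the derivative signs must alternate. It is enough that the constructed roots are all the zeros in the window, and this follows from the sharp upper bound in \cref{thm:sharp-cyclicity}, which shows the list of roots is complete.

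The main obstacle is the \(C^1\) control needed to transfer derivative signs, not merely root locations, from the limit profiles to \(\Delta\). On the smooth scale this control is the \(C^1\) convergence. On the active scale it is the convergence in \(C^1_{\mathrm{loc}}((0,\infty))\), after rescaling \(\Delta_q\) by the factor \(t^{n-\gamma_{n,\nu}}\). If these convergences are inconvenient to invoke directly, the fallback is the completeness-plus-Rolle argument above, which needs only simplicity of the roots and the upper bound.
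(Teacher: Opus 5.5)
Your proposal is correct and follows the paper's proof. Attraction reduces to \(\Delta_q<0\) because \(|\Delta_q|<1\) near the origin. One small slip here: on the active side \(\Delta_q=S_q+O(q_+^{1/\nu})\), not \(O(q_+^{1/\nu})\) alone, but joint continuity with \(\Delta_q(0,0)=0\) still gives the bound. The \(C^1\) limit \(t^{1-n}\Delta_q(q_j(t),p_t)\to c\prod_{k\neq j}(r_k-r_j)\) then gives alternating signs ending with \(\sgn c\) at \(q_1\), and \(\sgn d=-\sgn c\) at \(q_{\mathrm{ram}}\) continues the pattern.

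Your fallback is also valid. It combines completeness of the root list, which follows from the sharp upper bound, with the fact that derivatives at consecutive simple zeros of a \(C^1\) function have opposite signs. It does, however, invoke the full Cross-Seam bound, whereas the direct limit computation already suffices.
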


\begin{proof}
Joint continuity and \(\Delta_q(0,0)=0\) allow the germ to be chosen with
\(|\Delta_q|<1\).  Hence
\(|P'(q_*,p)|<1\) is equivalent to \(\Delta_q(q_*,p)<0\).  The rescaled
negative-root derivatives satisfy
\[
  t^{1-n}\Delta_q(q_j(t),p_t)
  \longrightarrow c\prod_{k\neq j}(r_k-r_j),
\]
so their signs alternate in spatial order; when present, the active root
has sign \(\sgn d=-\sgn c\) by
\eqref{eq:positive-root-derivative-sign}, continuing the alternation.
\end{proof}

\section{Ambient grazing strata and physical codimension}
\label{sec:ambient-codimension}

The ramified theory developed above is relative to a family with fixed
contact order.  We now embed that family in a full smooth parameter space.
Under the contact- and return-jet transversality conditions imposed below,
this yields two
complementary counts: neutral return within the fixed-contact stratum has
codimension \(n\), while maintaining contact of order \(\nu\) contributes a further
\(\nu-2\) conditions when the contact point may move in a planar phase
space.  The analysis follows a selected local contact branch.  Complete
contact loci may have self-intersections and require global projection
analysis.

Let \(M\) be a smooth surface, let \(P\subset\R^m\) be open, and let
\((X_p,h_p)\) be a smooth parameter family consisting of a vector field on
\(M\) and a switching function.  Write
\[
  H_k(z,p)=L_{X_p}^{\,k}h_p(z),
  \qquad k\geq0,
\]
and, for a fixed even integer \(\nu\geq2\), define the contact-jet map
\begin{equation}
  \mathcal J_\nu:M\times P\longrightarrow\R^\nu,
  \qquad
  \mathcal J_\nu(z,p)
  =\bigl(H_0(z,p),\ldots,H_{\nu-1}(z,p)\bigr).
  \label{eq:ambient-contact-jet}
\end{equation}
The next differential-topological theorem is independent of parity;
evenness enters only through the grazing interpretation.

\begin{definition}[Ambiently transverse contact]
\label[definition]{def:ambient-transverse-contact}
A point \((z_0,p_0)\in M\times P\) is an ambiently transverse contact of
order \(\nu\) if
\begin{equation}
  \mathcal J_\nu(z_0,p_0)=0,
  \qquad
  H_\nu(z_0,p_0)\neq0,
  \qquad
  \rank D_{(z,p)}\mathcal J_\nu(z_0,p_0)=\nu,
  \label{eq:ambient-contact-hypotheses}
\end{equation}
and \(X_{p_0}(z_0)\neq0\), \(d_zh_{p_0}(z_0)\neq0\).  Under our convention,
it is a grazing contact if, in addition,
\(-H_\nu(z_0,p_0)>0\) and the chosen side labels agree with this sign.
\end{definition}

\begin{theorem}[Contact-jet incidence and projected codimension]
\label{thm:ambient-contact-incidence}
Suppose that \((z_0,p_0)\) is an ambiently transverse contact of order
\(\nu\).  Then the following statements hold after restricting to
sufficiently small neighbourhoods of \((z_0,p_0)\) and \(p_0\).

\begin{enumerate}
\item The selected incidence branch
  \begin{equation}
    \mathcal I_\nu
    =\{(z,p):\mathcal J_\nu(z,p)=0\}
    \label{eq:ambient-incidence}
  \end{equation}
  is a smooth submanifold of \(M\times P\) of codimension \(\nu\).

\item The parameter projection
  \(\pi_P:\mathcal I_\nu\to P\) is locally an embedding.  Its image
  \begin{equation}
    \mathcal G_\nu=\pi_P(\mathcal I_\nu)
    \label{eq:ambient-grazing-stratum}
  \end{equation}
  is therefore a smooth embedded submanifold germ of \(P\), and
  \begin{equation}
    \operatorname{codim}_P\mathcal G_\nu=\nu-2.
    \label{eq:ambient-grazing-codimension}
  \end{equation}
  In particular, each \(p\in\mathcal G_\nu\) close to \(p_0\) has a unique
  order-\(\nu\) contact \(z=z(p)\) on this selected local incidence branch,
  and \(z(p)\) is smooth.
\end{enumerate}
\end{theorem}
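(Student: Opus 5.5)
The plan is a direct implicit-function argument in two stages. First, cut out $\mathcal I_\nu$ as a regular level set. Second, show that the parameter projection restricted to $\mathcal I_\nu$ is an immersion, using only the contact conditions at $z_0$. Throughout, $\dim M=2$ and $P\subset\R^m$ is open.

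For part (1), the hypothesis $\rank D_{(z,p)}\mathcal J_\nu(z_0,p_0)=\nu$ in \eqref{eq:ambient-contact-hypotheses} says that $0$ is a regular value of $\mathcal J_\nu$ near $(z_0,p_0)$. By the regular value theorem (equivalently, the implicit function theorem), after shrinking to a neighbourhood of $(z_0,p_0)$ the zero set $\mathcal I_\nu$ is a smooth embedded submanifold of codimension $\nu$. Its dimension is $2+m-\nu$, and its tangent space is $T_{(z,p)}\mathcal I_\nu=\ker D\mathcal J_\nu(z,p)$.

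For part (2), the key step is to show that $d\pi_P$ is injective on $T\mathcal I_\nu$ at $(z_0,p_0)$. A vector in $\ker d\pi_P\cap T\mathcal I_\nu$ has the form $(\zeta,0)$ with $D_z\mathcal J_\nu(z_0,p_0)\zeta=0$, so it suffices to prove that the state differential $D_z\mathcal J_\nu(z_0,p_0):T_{z_0}M\to\R^\nu$ is injective. Since $X_{p_0}(z_0)\neq0$ and $d_zh_{p_0}(z_0)\neq0$, we use the frame $\{X,\xi\}$ of $T_{z_0}M$, where $X=X_{p_0}(z_0)$ and $\xi$ is any vector with $d_zh_{p_0}(z_0)[\xi]\neq0$. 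These two vectors are independent because $d_zh(X)=H_1(z_0,p_0)=0$, which uses $\nu\geq2$. The Lie-derivative identity $D_zH_k[X]=L_XH_k=H_{k+1}$ then gives
\[
  D_z\mathcal J_\nu[X]=(H_1,\ldots,H_\nu)(z_0,p_0)=(0,\ldots,0,H_\nu(z_0,p_0)),
\]
which is nonzero because $H_\nu(z_0,p_0)\neq0$. Now take $\zeta=aX+b\xi$ in the kernel. The first component of $D_z\mathcal J_\nu[\zeta]$ is $b\,d_zh[\xi]$, which forces $b=0$. The last component is then $aH_\nu$, which forces $a=0$. Hence $\pi_P|_{\mathcal I_\nu}$ is an immersion at $(z_0,p_0)$, and by openness of the rank condition it remains an immersion on a neighbourhood.

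An immersion is locally an embedding, so after shrinking the image $\mathcal G_\nu$ is an embedded submanifold germ of $P$ of dimension $2+m-\nu$. Its codimension is therefore $\nu-2$. The inverse $(\pi_P|_{\mathcal I_\nu})^{-1}$ is smooth on $\mathcal G_\nu$, and composing it with the projection to $M$ gives the unique contact $z(p)$ on this branch, depending smoothly on $p$. Injectivity of the local embedding gives the uniqueness.

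The main obstacle is the injectivity step. It needs the frame $\{X,\xi\}$ and the identity $L_XH_k=H_{k+1}$; the remaining steps are standard differential topology. One care point is that injectivity must be established before shrinking, so that uniqueness refers only to the selected branch and not to the whole contact locus.
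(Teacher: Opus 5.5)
Your proposal is correct and follows the paper's proof essentially step for step. The regular-level-set theorem gives \(\mathcal I_\nu\), and \(D_z\mathcal J_\nu(z_0,p_0)\) is injective because the flow direction maps to \((0,\ldots,0,H_\nu)\) while a seam-transverse vector has nonzero first component, so \(\pi_P|_{\mathcal I_\nu}\) is an immersion, hence locally an embedding with image of codimension \(\nu-2\) (you kill the kernel via the frame \(\{X,\xi\}\), the paper reads off rank two from the independent images, which is a cosmetic difference).
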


\begin{proof}[Proof outline]
The regular-level-set theorem first gives an incidence manifold of
dimension \(m+2-\nu\).  To analyze the parameter projection, observe that
along the flow direction the contact jet has
only its last component nonzero, while a vector transverse to the seam has
its first component nonzero.  Thus the state derivative has rank two and
is injective.  A vertical tangent to the incidence manifold must therefore
vanish, so the selected parameter projection is locally an embedding.
This accounts for the subtraction of the two movable state coordinates and
establishes regularity of the selected projection.  The full rank and
localization argument is given in
\cref{app:contact-strata}.
\end{proof}

The full-rank condition in \eqref{eq:ambient-contact-hypotheses} is the
transversality of the parameter family to the order-\(\nu\) contact locus.
The rank-two calculation in \cref{app:contact-strata} is automatic
from regularity of the seam and nondegeneracy of the order-\(\nu\) contact;
the parameter directions must supply the remaining \(\nu-2\) ranks.  This
explains geometrically why the existence of an unmarked local quadratic
contact is an open condition, while every increase of the contact order by
one imposes
one further ambient condition.  Prescribing a recurrent orbit or fixing
the contact point can impose additional conditions, as recorded below.

\subsection{Intrinsic character of the contact stratum}

We next verify that \(\mathcal G_\nu\) and its codimension are invariant
under changes of the auxiliary defining function, the speed used to
parametrize the reference trajectories, and phase-space coordinates.  The
key algebraic fact is the triangularity of iterated Lie derivatives.

\begin{lemma}[Triangular invariance of the contact jet]
\label[lemma]{lem:contact-jet-triangular-invariance}
Let \(u(z,p)\neq0\) and \(a(z,p)>0\) be smooth, and set
\[
  \widehat h_p=u_p h_p,
  \qquad
  \widehat X_p=a_pX_p.
\]
There is a smooth lower-triangular matrix
\(\mathsf T_\nu(z,p)\in\operatorname{GL}(\nu,\R)\) such that
\begin{equation}
  \widehat{\mathcal J}_\nu=\mathsf T_\nu\mathcal J_\nu,
  \qquad
  \operatorname{diag}\mathsf T_\nu
  =\bigl(u,ua,ua^2,\ldots,ua^{\nu-1}\bigr).
  \label{eq:contact-jet-triangular-law}
\end{equation}
At every zero of \(\mathcal J_\nu\),
\begin{equation}
  D\widehat{\mathcal J}_\nu=\mathsf T_\nu D\mathcal J_\nu,
  \qquad
  L_{\widehat X}^{\,\nu}\widehat h
  =ua^\nu L_X^{\,\nu}h.
  \label{eq:contact-jet-rank-law}
\end{equation}
Thus the incidence equations, their rank, and the order of contact are
unchanged.  If \(u>0\), the labels of the two switching sides are unchanged
as well.

If \(y=\Phi_p(z)\) is a parameter-dependent phase-space diffeomorphism,
\(\widehat X_p=(\Phi_p)_*X_p\), and
\(\widehat h_p=h_p\circ\Phi_p^{-1}\), then
\begin{equation}
  \widehat H_k(\Phi_p(z),p)=H_k(z,p)
  \qquad(k\geq0).
  \label{eq:contact-jet-coordinate-covariance}
\end{equation}
Consequently, phase-space and parameter coordinate changes carry the
incidence germ diffeomorphically to the corresponding incidence germ and
preserve its projected codimension.
\end{lemma}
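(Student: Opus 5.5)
The triangular law will come from an induction on \(k\), using only the Leibniz rule for the derivation \(L_{\widehat X}=aL_X\). Write \(\widehat H_k=L_{\widehat X}^{\,k}\widehat h\). We claim that for each \(k\geq0\)
\[
  \widehat H_k=ua^kH_k+\sum_{j<k}t_{kj}H_j,
\]
with smooth coefficients \(t_{kj}(z,p)\).

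The case \(k=0\) is \(\widehat h=uh\). For the inductive step we apply \(aL_X\) to the identity for \(\widehat H_k\), which gives
\[
  \widehat H_{k+1}=a\,X(ua^k)H_k+ua^{k+1}H_{k+1}+\sum_{j<k}\bigl(aX(t_{kj})H_j+a\,t_{kj}H_{j+1}\bigr).
\]
Every term except \(ua^{k+1}H_{k+1}\) involves only \(H_j\) with \(j\leq k\). This proves the claim, with diagonal entry \(ua^{k+1}\). Running \(k\) from \(0\) to \(\nu-1\) gives the matrix \(\mathsf T_\nu\) in \eqref{eq:contact-jet-triangular-law}. Its diagonal never vanishes, since \(u\neq0\) and \(a>0\), so \(\mathsf T_\nu\) is invertible. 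The same computation at level \(k=\nu\) gives \(\widehat H_\nu=ua^\nu H_\nu+\sum_{j<\nu}t_{\nu j}H_j\). At a zero of \(\mathcal J_\nu\) the sum vanishes, which yields the second identity in \eqref{eq:contact-jet-rank-law}.

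For the derivative identity we differentiate \(\widehat{\mathcal J}_\nu=\mathsf T_\nu\mathcal J_\nu\) by the product rule:
\[
  D\widehat{\mathcal J}_\nu=(D\mathsf T_\nu)\mathcal J_\nu+\mathsf T_\nu D\mathcal J_\nu.
\]
The first term vanishes wherever \(\mathcal J_\nu=0\). Three conclusions follow.
\begin{enumerate}
\item The zero sets of \(\mathcal J_\nu\) and \(\widehat{\mathcal J}_\nu\) coincide, because \(\mathsf T_\nu\) is invertible.
\item The ranks of the two derivatives agree on that common zero set.
\item \(\widehat H_\nu\) and \(H_\nu\) are simultaneously nonzero.
\end{enumerate}
Hence the hypotheses \eqref{eq:ambient-contact-hypotheses} transfer in both directions. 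The nondegeneracy conditions also transfer: \(\widehat X\neq0\) iff \(X\neq0\), and \(d\widehat h=u\,dh\) on \(\{h=0\}\). When \(u>0\), the sign of \(\widehat h\) equals the sign of \(h\), so the side labels are preserved. The grazing sign \(-\widehat H_\nu>0\) is then preserved too, because \(ua^\nu>0\).

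For coordinate covariance \eqref{eq:contact-jet-coordinate-covariance} we use naturality of the Lie derivative. Pushforward satisfies
\[
  L_{(\Phi_p)_*X_p}(g\circ\Phi_p^{-1})=(L_{X_p}g)\circ\Phi_p^{-1},
\]
and the parameter \(p\) is held fixed throughout, since Lie derivatives act in the state variable only. Iterating \(k\) times starting from \(g=h_p\) gives the identity. It follows that the map \((z,p)\mapsto(\Phi_p(z),p)\) carries \(\mathcal I_\nu\) diffeomorphically onto \(\widehat{\mathcal I}_\nu\). This map commutes with the projection \(\pi_P\), so the images \(\mathcal G_\nu\) coincide and the codimension is preserved. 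A reparametrization \(p=\psi(\widetilde p)\) of parameters acts by a diffeomorphism of \(P\) alone, and codimension of an embedded submanifold germ is invariant under diffeomorphisms. The ambient-rank hypothesis also transfers, because composing with a diffeomorphism does not change the rank of the derivative.

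The only real care point is the triangular bookkeeping: we must confirm that each \(t_{kj}\) is smooth, and that the differentiated terms, including \(X(ua^k)\) and \(X(t_{kj})\), land strictly below the diagonal. Everything else is formal.
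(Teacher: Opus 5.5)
Your proof is correct and follows essentially the same route as the paper. The ingredients match: a Leibniz-rule triangularity argument, the product rule $D\widehat{\mathcal J}_\nu=(D\mathsf T_\nu)\mathcal J_\nu+\mathsf T_\nu D\mathcal J_\nu$ evaluated at zeros of $\mathcal J_\nu$, naturality of Lie derivatives, and the total-space diffeomorphism $(z,p)\mapsto(\Phi_p(z),p)$ commuting with $\pi_P$.

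The only difference is bookkeeping. The paper handles the multiplier $u$ through the binomial expansion of $L_X^k(uh)$. It handles the rescaling separately, through $(aL_X)^k=a^kL_X^k+\sum_{j=1}^{k-1}b_{kj}L_X^j$, and then combines the two. You instead run a single induction directly on $\widehat H_k$.
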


\begin{proof}[Proof outline]
Leibniz' rule gives the lower-triangular jet law, and naturality gives the
coordinate covariance.  The rank and projected-incidence calculations are
in \cref{app:contact-strata}.
\end{proof}

\begin{remark}[An intrinsic stratum with auxiliary parametrizations]
The lemma shows that the germ of the set of systems possessing the selected
contact is intrinsic.  The individual entries of \(\mathcal J_\nu\) and
the choice of its \(\nu-2\) transverse parameters depend on the auxiliary
representation.  An invertible triangular transformation relates these
choices along the incidence set and preserves the transversality and
codimension data.
\end{remark}

\subsection{Persistence under perturbation}

The stratum persists under perturbations of the whole parameter family:
each nearby perturbed family contains a nearby codimension-\(\nu-2\)
contact stratum.  Individual parameters outside that stratum generally
lose the high-order contact.

\begin{proposition}[Persistence of the selected ambient stratum]
\label[proposition]{prop:ambient-stratum-persistence}
Under the hypotheses of \cref{thm:ambient-contact-incidence}, every
sufficiently small smooth perturbation \((\widetilde X_p,\widetilde h_p)\)
in the joint \(C^\nu\) topology in \((z,p)\), on a fixed compact
neighbourhood, has a selected nearby order-\(\nu\) incidence germ
\(\widetilde{\mathcal I}_\nu\) in a fixed coordinate neighbourhood of
\((z_0,p_0)\).  After shrinking that neighbourhood uniformly, its parameter
projection is a local embedding and
\[
  \operatorname{codim}_P\widetilde{\mathcal G}_\nu=\nu-2.
\]
For a smooth finite-dimensional perturbation family, the selected graph and
projected embedding vary smoothly.  The grazing sign persists when
\(-H_\nu(z_0,p_0)>0\).
\end{proposition}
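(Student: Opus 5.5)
The plan is to treat the contact-jet map as a smooth function of the perturbation and apply the implicit function theorem, with estimates that are uniform on a fixed neighbourhood. Write \(\widetilde{\mathcal J}_\nu(z,p)=(\widetilde H_0,\ldots,\widetilde H_{\nu-1})\), where \(\widetilde H_k=L_{\widetilde X_p}^{\,k}\widetilde h_p\). Each \(\widetilde H_k\) with \(k\leq\nu\) involves at most \(k\) derivatives of \(\widetilde h\) and at most \(k-1\) derivatives of \(\widetilde X\). Hence \(C^\nu\)-closeness of \((\widetilde X,\widetilde h)\) to \((X,h)\) on a fixed compact neighbourhood \(U\) of \((z_0,p_0)\) gives two things: \(\widetilde{\mathcal J}_\nu\) is \(C^1\)-close to \(\mathcal J_\nu\), and \(\widetilde H_\nu\) is \(C^0\)-close to \(H_\nu\).

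First, by \cref{thm:ambient-contact-incidence}, \(D\mathcal J_\nu(z_0,p_0)\) has rank \(\nu\). Choose coordinates \((\xi,\zeta)\) on \(M\times P\), with \(\xi\in\R^\nu\), such that \(D_\xi\mathcal J_\nu\) is invertible at the base point. A quantitative implicit function theorem, in contraction-mapping form with constants fixed by \(\mathcal J_\nu\) on \(U\), then does the following: for all perturbations with \(\|\widetilde{\mathcal J}_\nu-\mathcal J_\nu\|_{C^1(U)}<\delta\), it gives a graph \(\xi=\widetilde g(\zeta)\) on a fixed ball, and this graph is the whole zero set of \(\widetilde{\mathcal J}_\nu\) in a fixed box. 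That set is the selected incidence germ \(\widetilde{\mathcal I}_\nu\), of codimension \(\nu\), and \(\widetilde g\) is \(C^0\)-close to \(g\). For a smooth finite-dimensional perturbation family \(\varepsilon\mapsto(\widetilde X^\varepsilon,\widetilde h^\varepsilon)\), apply the ordinary implicit function theorem to \((\xi,\zeta,\varepsilon)\mapsto\widetilde{\mathcal J}_\nu\). This gives smooth dependence of the graph on \(\varepsilon\).

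Second, to show the projection is an embedding, reproduce the rank-two argument of \cref{thm:ambient-contact-incidence} in perturbed form. The state derivative \(D_z\mathcal J_\nu(z_0,p_0)\) is injective, so its smallest singular value is positive. By \(C^1\)-closeness, \(D_z\widetilde{\mathcal J}_\nu\) remains injective on a uniformly shrunk neighbourhood. A vertical tangent vector \((\dot z,0)\) to \(\widetilde{\mathcal I}_\nu\) satisfies \(D_z\widetilde{\mathcal J}_\nu\,\dot z=0\), so \(\dot z=0\), and \(\pi_P|_{\widetilde{\mathcal I}_\nu}\) is an immersion.

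Injectivity of the projection is the main obstacle, because uniformity in the perturbation is needed. I would handle it by contradiction. Suppose \((z_1,p)\) and \((z_2,p)\) are two points of \(\widetilde{\mathcal I}_\nu\) with \(z_1\neq z_2\). Then
\[
0=\widetilde{\mathcal J}_\nu(z_2,p)-\widetilde{\mathcal J}_\nu(z_1,p)=\Bigl(\int_0^1 D_z\widetilde{\mathcal J}_\nu\bigl(z_1+s(z_2-z_1),p\bigr)\dd s\Bigr)(z_2-z_1).
\]
On a convex chart of uniformly small diameter, the averaged matrix stays within \(\delta\) of the injective matrix \(D_z\mathcal J_\nu(z_0,p_0)\). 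It is therefore injective, which forces \(z_1=z_2\).

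So \(\pi_P\) is an injective immersion of a manifold of dimension \(m+2-\nu\). Its image \(\widetilde{\mathcal G}_\nu\) is an embedded germ of codimension \(m-(m+2-\nu)=\nu-2\), and \(z(p)\) is obtained from the inverse of the projection. Finally, \(\widetilde H_\nu\) is \(C^0\)-close to \(H_\nu\) and \(-H_\nu(z_0,p_0)>0\). Shrinking so that \(-\widetilde H_\nu>0\) on the box makes every point of \(\widetilde{\mathcal I}_\nu\) a grazing contact with the same side labels, and by the same closeness \(\widetilde X_p\neq0\) and \(d_z\widetilde h_p\neq0\) there.
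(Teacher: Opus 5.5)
Your proposal is correct and follows essentially the paper's route. Both arguments build an implicit-function graph over a fixed nonzero \(\nu\)-minor whose invertibility survives \(C^\nu\)-small perturbations, with the same derivative-order count. Both then use a rank-two state derivative at nearby incidence points to make the parameter projection an immersion: you get it from openness of injective matrices, while the paper reruns the flow-direction calculation. Both finish with openness of the sign of \(H_\nu\).

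Your contraction-mapping implicit function theorem and the mean-value injectivity argument on a convex box make explicit the uniformity that the paper obtains by a further restriction or the parameter-dependent constant-rank theorem. One small step is still needed: passing from an injective immersion to an embedding requires restricting to a compact sub-box, or invoking the local immersion theorem.
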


\begin{proof}[Proof outline]
Apply the implicit-function and constant-rank theorems to a fixed nonzero
\(\nu\)-minor.  Uniform neighbourhoods, the projected embedding, and smooth
dependence are established in \cref{app:contact-strata}.
\end{proof}

\subsection{Neutrality within the ambient grazing stratum}

We now combine the ambient contact count with the smooth coefficient map of
\eqref{eq:coefficient-map}.  The local contact jet supplies the grazing
conditions, while the coefficient map carries global return information.
We therefore impose transversality of the latter as an independent
hypothesis.

\begin{theorem}[Total physical codimension]
\label{thm:total-physical-codimension}
Let \((z_0,p_0)\) satisfy the hypotheses of
\cref{thm:ambient-contact-incidence}, and let \(\mathcal G_\nu\) be the
selected projected contact stratum.  Suppose that, for \(p\in\mathcal
G_\nu\), a smooth choice of local sections and a seam-centred coordinate
produces a smooth reference displacement \(S(q,p)\).  Define
\begin{equation}
  A_\nu:\mathcal G_\nu\longrightarrow\R^n,
  \qquad
  A_\nu(p)=
  \left(
    S(0,p),\frac{\partial_qS(0,p)}{1!},\ldots,
    \frac{\partial_q^{n-1}S(0,p)}{(n-1)!}
  \right).
  \label{eq:ambient-neutral-coefficient-map}
\end{equation}
Assume
\begin{equation}
  A_\nu(p_0)=0,
  \qquad
  c_n:=\frac{\partial_q^nS(0,p_0)}{n!}\neq0,
  \qquad
  \rank d(A_\nu)_{p_0}=n,
  \label{eq:ambient-neutral-transversality}
\end{equation}
where the differential is taken on \(T_{p_0}\mathcal G_\nu\).  Then the
order-\(n\) neutral grazing locus
\begin{equation}
  \mathcal N_{n,\nu}
  =\{p\in\mathcal G_\nu:A_\nu(p)=0\}
  \label{eq:ambient-neutral-locus}
\end{equation}
is a smooth embedded submanifold germ and
\begin{equation}
  \operatorname{codim}_P\mathcal N_{n,\nu}=n+\nu-2.
  \label{eq:total-physical-codimension}
\end{equation}
In particular, the hypotheses force \(m\geq n+\nu-2\).
\end{theorem}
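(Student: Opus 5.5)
The plan is to combine \cref{thm:ambient-contact-incidence} with the regular-value theorem applied on the stratum \(\mathcal G_\nu\), and then add codimensions. By \cref{thm:ambient-contact-incidence}, after shrinking, \(\mathcal G_\nu\) is a smooth embedded submanifold germ of \(P\) through \(p_0\) with \(\dim\mathcal G_\nu=m-\nu+2\). The contact point \(z(p)\) depends smoothly on \(p\in\mathcal G_\nu\), and so does the seam-centred coordinate. Together with the assumed smooth choice of sections, this makes \(A_\nu\) a smooth map on the manifold \(\mathcal G_\nu\). To be safe, I would parametrize \(\mathcal G_\nu\) by a local chart \(\psi:U\subset\R^{m-\nu+2}\to\mathcal G_\nu\) and work with \(A_\nu\circ\psi\). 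The differential \(d(A_\nu)_{p_0}\) on \(T_{p_0}\mathcal G_\nu\) is then \(D(A_\nu\circ\psi)\) composed with the isomorphism \(d\psi\).

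Since \(\rank d(A_\nu)_{p_0}=n\) and \(A_\nu(p_0)=0\), the submersion (implicit function) theorem applies to \(A_\nu\circ\psi\) near \(\psi^{-1}(p_0)\). Its zero set is a smooth submanifold of \(U\) of codimension \(n\). Pushing forward by the embedding \(\psi\), the set \(\mathcal N_{n,\nu}\) is a smooth embedded submanifold germ of \(\mathcal G_\nu\), of dimension \(m-\nu+2-n\). Because \(\mathcal G_\nu\) is itself embedded in \(P\), \(\mathcal N_{n,\nu}\) is embedded in \(P\), and
\[
  \operatorname{codim}_P\mathcal N_{n,\nu}
  =m-(m-\nu+2-n)=n+\nu-2 .
\]
The condition \(c_n\neq0\) is open, so it persists on the germ. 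It is not needed for the manifold structure, only to certify that points of \(\mathcal N_{n,\nu}\) have neutral order exactly \(n\). The inequality \(m\geq n+\nu-2\) follows because a surjection \(T_{p_0}\mathcal G_\nu\to\R^n\) forces \(m-\nu+2\geq n\).

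The main obstacle is smoothness of \(A_\nu\) on \(\mathcal G_\nu\). The coefficients \(\partial_q^jS(0,p)\) are computed in a seam-centred coordinate that moves with \(z(p)\), and \(\mathcal G_\nu\) is only a submanifold, not an open set. I would handle this by using the smooth graph \(p\mapsto z(p)\) from part (2) of \cref{thm:ambient-contact-incidence}, which is smooth on \(\mathcal G_\nu\) and hence smooth in the chart \(\psi\). I would choose the incoming section and the coordinate \(q=I_p\) to depend smoothly on \((z(p),p)\), so that \(S\) is jointly smooth in \((q,u)\) with \(u\in U\). The rank hypothesis is stated intrinsically on \(T_{p_0}\mathcal G_\nu\). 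A change of section coordinate alters \(A_\nu\) only by a triangular transformation, as in the discussion of increasing section coordinates. That transformation has positive diagonal powers of \(a\), so the zero set and the rank are independent of these auxiliary choices.
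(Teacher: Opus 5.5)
Your proposal is correct and follows the paper's proof. Theorem~\ref{thm:ambient-contact-incidence} makes \(\mathcal G_\nu\) an embedded germ of codimension \(\nu-2\), the rank condition makes \(0\) a regular value of \(A_\nu\) on \(\mathcal G_\nu\), and codimensions add to give \(n+\nu-2\). Your additions — working in a chart of \(\mathcal G_\nu\), observing that \(c_n\neq0\) is not needed for the manifold structure, deriving \(m\geq n+\nu-2\) from surjectivity, and the triangular invariance under section changes — only spell out what the paper leaves implicit or records in its subsequent remark.
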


\begin{proof}
By \cref{thm:ambient-contact-incidence}, \(\mathcal G_\nu\) is an embedded
submanifold germ of codimension \(\nu-2\).  The rank condition in
\eqref{eq:ambient-neutral-transversality} says that \(A_\nu\) is transverse
to \(0\in\R^n\).  Hence the regular-level-set theorem applied on
\(\mathcal G_\nu\) makes \(\mathcal N_{n,\nu}\) a codimension-\(n\)
submanifold of \(\mathcal G_\nu\).  Codimensions add along embedded
submanifolds, which gives \eqref{eq:total-physical-codimension}.
\end{proof}

\begin{corollary}[Relative sharp cyclicity on the ambient stratum]
\label[corollary]{cor:ambient-relative-cyclicity}
Under the hypotheses of \cref{thm:total-physical-codimension}, suppose the
physical displacement on \(\mathcal G_\nu\) has the full-remainder form of
\cref{def:ramified-displacement}, with \(cd\neq0\).  Its cyclicity under
parameter paths in \(\mathcal G_\nu\) is
\begin{equation}
  \Cyc_{\mathcal G_\nu}=
  \begin{cases}
    n,&cd>0,\\
    n+1,&cd<0.
  \end{cases}
  \label{eq:ambient-relative-sharp-cyclicity}
\end{equation}
The attaining paths lie in \(\mathcal G_\nu\).  For actual Poincar\'e
displacements on a common return domain, their simple roots are hyperbolic
periodic orbits after shrinking the germ.
\end{corollary}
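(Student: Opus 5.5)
The plan is to reduce the corollary to \cref{thm:sharp-cyclicity} after restricting to the stratum. Parametrize \(\mathcal G_\nu\) near \(p_0\) by a smooth chart \(\psi:\R^{m-\nu+2}\supset U\to\mathcal G_\nu\) with \(\psi(0)=p_0\); such a chart exists by \cref{thm:ambient-contact-incidence}. Then consider the pulled-back displacement \(\widetilde\Delta(q,w)=\Delta(q,\psi(w))\). By hypothesis this has the full-remainder form of \cref{def:ramified-displacement}, with \(\widetilde S=S\circ(\mathrm{id}\times\psi)\) and \(\widetilde K=K\circ(\mathrm{id}\times\mathrm{id}\times\psi)\). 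These remain jointly \(C^{n+1}\) and \(C^2\), respectively. The central data are also unchanged. Since \(A_\nu(p_0)=0\), we get \(\partial_q^j\widetilde S(0,0)=0\) for \(j<n\), and \(\widetilde c=c_n\neq0\) and \(\widetilde d=d\neq0\). Parameter paths in \(\mathcal G_\nu\) near \(p_0\) correspond bijectively to paths in \(U\) near \(0\), and parameter neighbourhoods correspond too. So \(\Cyc_{\mathcal G_\nu}(\Delta)=\Cyc_{(0,0)}(\widetilde\Delta)\) directly from \cref{def:cyclicity}.

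For the upper bound, apply \cref{thm:sharp-cyclicity}, which in turn uses \cref{prop:shape-control} and the Cross-Seam theorem, to \(\widetilde\Delta\). This gives at most \(n\) or \(n+1\) zeros according to \(\sgn(cd)\), uniformly in \(w\).

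For attainment, check the rank condition \eqref{eq:rank-n} for the pulled-back coefficient map \(\widetilde A=A_\nu\circ\psi\). Here \(D\widetilde A(0)=d(A_\nu)_{p_0}\circ D\psi(0)\). Since \(D\psi(0)\) is an isomorphism onto \(T_{p_0}\mathcal G_\nu\), the rank hypothesis in \eqref{eq:ambient-neutral-transversality} gives rank \(n\) and forces \(m-\nu+2\geq n\). \Cref{thm:rank-lifting-root-realization} then produces a lifted path \(w_t=O(t)\) in \(U\) with \(n\) simple negative roots, plus one simple positive root when \(cd<0\). The path \(p_t=\psi(w_t)\) lies in \(\mathcal G_\nu\), so equality holds in both sign cases.

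For the hyperbolicity claim, use simplicity. At each constructed root, \(\Delta_q\neq0\) by the rescaled derivative limits in the proof of \cref{cor:stability}. After shrinking the germ so that \(|\Delta_q|<1\), the multiplier \(1+\Delta_q\) lies in \((0,2)\setminus\{1\}\). So for an actual first return each fixed point is a hyperbolic periodic orbit, with stability alternating as in \cref{cor:stability}.

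The main obstacle is bookkeeping rather than analysis. One must make sure the hypothesis ``full-remainder form on \(\mathcal G_\nu\)'' is applied in chart coordinates with the required joint regularity. One must also make sure that the infimum over neighbourhoods in \(\mathcal G_\nu\) matches the one in \cref{def:cyclicity}. I would check this by noting that \(\psi\) is a homeomorphism onto a relatively open germ.
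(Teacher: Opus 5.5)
Your proposal is correct and follows the paper's own proof, which observes that the rank condition \eqref{eq:ambient-neutral-transversality} on \(T_{p_0}\mathcal G_\nu\) is the rank hypothesis of \cref{thm:sharp-cyclicity} and then applies that theorem together with \cref{cor:stability}. Your explicit chart pullback \(\widetilde\Delta(q,w)=\Delta(q,\psi(w))\) only makes precise bookkeeping that the paper leaves implicit: the regularity of the pulled-back germ, the matching of parameter neighbourhoods in the cyclicity infimum, and the inequality \(m-\nu+2\geq n\).
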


\begin{proof}
On \(\mathcal G_\nu\), condition
\eqref{eq:ambient-neutral-transversality} is the rank hypothesis of
\cref{thm:sharp-cyclicity}.  Apply that theorem and
\cref{cor:stability}.
\end{proof}

\begin{remark}[Intrinsic relative and ambient counts]
\label[remark]{rem:relative-ambient-codimension}
An increasing seam-centred change of section coordinate preserves the
order-\(n\) zero of the smooth reference displacement.  At a point where
the first \(n\) coefficients vanish, the change induces a lower-triangular
action with nonzero diagonal on their parameter differentials;
multiplication by a smooth nonvanishing displacement unit has the same
property.  Thus the
rank condition \eqref{eq:ambient-neutral-transversality} is independent of
these choices.  Indeed, if \(q=\psi(\widetilde q,p)\) is an increasing
seam-centred section coordinate, the actual return displacement changes by
the exact conjugacy identity
\[
  \widetilde\Delta(\widetilde q,p)
  =\psi_p^{-1}\!\left(
      \psi_p(\widetilde q)
      +\Delta(\psi_p(\widetilde q),p)
    \right)-\widetilde q.
\]
Taking the first \(n\) jets at a central displacement whose lower jets
vanish gives precisely the invertible triangular action just described;
the exact conjugacy identity therefore establishes the required invariance.
On the fixed-contact stratum, the coefficient rank gives the intrinsic
relative count, while allowing the planar contact to move contributes the
ambient conditions:
\[
  \operatorname{codim}_{\mathcal G_\nu}\mathcal N_{n,\nu}=n,
  \qquad
  \underbrace{n}_{\text{neutral return}}+
  \underbrace{(\nu-2)}_{\text{moving contact}}=n+\nu-2.
\]
The subtraction of two uses both state coordinates of the unmarked contact
point.  Fixing that point instead gives the generic parameter count \(\nu\);
requiring it to lie on another prescribed orbit or invariant set may add
global incidence conditions.  Thus relative cyclicity and relative neutral
codimension are independent of \(\nu\), whereas ambient physical codimension
grows with the grazing order.  For \(\nu=2\), the selected contact stratum is
open and the two counts coincide.
\end{remark}

\section{The universal positive-part cap and weighted crossover}
\label{sec:transverse-cap}

The fixed-contact passage in \cref{thm:localized-active-passage} describes
the central order-\(\nu\) stratum.  Transverse contact jets replace its
truncated monomial by the positive-part area of a coercive polynomial.  The
result below extends the passage theorem to the corresponding weighted
blow-up.  It provides a uniform \emph{value estimate} when active components
are born or merge.  The full smooth ramified remainder and derivative
control require the additional structure developed in
\cref{sec:transverse-cyclicity}.

\paragraph{Notation for the weighted passage.}
The physical signed section coordinate is \(q=I_{\rm in}\).  Under the
blow-up, \(q=r^\nu\rho\) and
\((\mathbf u_r)_j=r^{\nu-j}u_j\) for \(1\leq j\leq\nu-2\): \(r\downarrow0\)
is the scale, \((\rho,\mathbf u)\) remains in a bounded normalized set, and
\(\mathbf u_r\) contains the physical centered contact coefficients.  We
retain \(p\) for ambient parameters.  Later, \(\eta\) denotes a normalized
linear tilt whose physical coefficient is \(r^{\nu-1}b\eta\).

Throughout, \(\nu\geq2\) is even and \(a>0\).  For
\(\mathbf u=(u_1,\ldots,u_{\nu-2})\), with the vector absent when
\(\nu=2\), define
\begin{equation}
  \mathcal C_{\nu,a}(\rho,\mathbf u)
  =\int_{\R}\left(\rho-a t^\nu-
    \sum_{j=1}^{\nu-2}u_jt^j\right)_+\dd t.
  \label{eq:transverse-cap-functional}
\end{equation}
Its natural dilation is
\begin{equation}
  \delta_r(\rho,u_1,\ldots,u_{\nu-2})
  =\left(r^\nu\rho,r^{\nu-1}u_1,\ldots,r^2u_{\nu-2}\right).
  \label{eq:cap-weighted-dilation}
\end{equation}
The constant coefficient is the penetration \(\rho\); translation of the
flow coordinate removes the \(t^{\nu-1}\) coefficient.  The remaining
\(\nu-2\) coefficients are precisely the planar transverse contact-jet
directions.

\subsection{Preparation and cap geometry}

\begin{lemma}[Transverse cap preparation]
\label[lemma]{lem:transverse-cap-preparation}
Let \(X_p^-\) be a smooth family of nonvanishing vector fields and \(h_p\)
a smooth family of switching functions near an order-\(\nu\) contact.
Choose an \(X_p^-\)-flow box \((t,I)\), so \(X_p^-=\partial_t\), and
orient \(I\) so the seam is
\[
  I=\Phi(t,p),
  \qquad \Phi(t,0)=at^\nu+O(t^{\nu+1}),
  \qquad a>0.
\]
There are smooth parameter-dependent translations of \(t\) and \(I\)
for which the constant and \(t^{\nu-1}\) coefficients vanish.  In these
coordinates put
\[
  u_j(p)=\frac1{j!}\partial_t^j\Phi(0,p),
  \qquad1\leq j\leq\nu-2.
\]
If the centered coefficient map has rank \(\nu-2\), then on a transverse
parameter slice these coefficients are coordinates and
\begin{equation}
  \Phi_{\mathbf u}(t)
  =a(\mathbf u)t^\nu+
    \sum_{j=1}^{\nu-2}u_jt^j+t^{\nu+1}R(t,\mathbf u),
  \qquad a(0)=a,
  \label{eq:prepared-cap-graph}
\end{equation}
with \(R\) smooth.  Moreover,
\begin{equation}
  h_{\mathbf u}(t,I)
  =g(t,I,\mathbf u)(I-\Phi_{\mathbf u}(t)),
  \qquad g>0.
  \label{eq:prepared-cap-switching-function}
\end{equation}
After shrinking the common box and parameter slice, there is
\(\kappa_*>0\) such that
\begin{equation}
  \partial_t^\nu\Phi_{\mathbf u}(t)\geq\kappa_*
  \qquad (|t|\leq T).
  \label{eq:prepared-cap-finite-type}
\end{equation}
\end{lemma}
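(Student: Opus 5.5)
The plan is to build the prepared coordinates in three steps: straighten the flow, write the seam as a graph over the flow time, and then remove the two unwanted Taylor coefficients with translations.

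\emph{Seam as a graph.} Since \(X_p^-\neq0\), the flow-box theorem with smooth parameter dependence gives coordinates \((t,I)\) with \(X_p^-=\partial_t\). The contact conditions \eqref{eq:grazing-geometry} say that along the reference orbit \(I=0\), \(t\mapsto h_0(t,0)\) vanishes to order exactly \(\nu\). Because \(dh_0(z_0)\neq0\) and \(\partial_th_0(0,0)=0\), we have \(\partial_Ih_0(0,0)\neq0\). After reorienting \(I\) if necessary, \(\partial_Ih>0\). The implicit function theorem then writes the seam as \(I=\Phi(t,p)\), and Hadamard's lemma in the \(I\)-variable gives
\[
  h=g\,(I-\Phi),\qquad g=\int_0^1\partial_Ih\bigl(t,\Phi+s(I-\Phi),p\bigr)\dd s>0 .
\]
This is \eqref{eq:prepared-cap-switching-function}. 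Since \(h(t,0,0)=-g\,\Phi(t,0)\) vanishes to order \(\nu\) with the grazing sign, \(\Phi(t,0)=at^\nu+O(t^{\nu+1})\) with \(a>0\).

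\emph{Centering.} I first kill the \(t^{\nu-1}\) coefficient of \(\Phi(\cdot,p)\) by a translation \(t\mapsto t-t_0(p)\). The equation is \(\partial_t^{\nu-1}\Phi(t_0,p)=0\). Its \(t\)-derivative at the origin is \(\partial_t^\nu\Phi(0,0)=\nu!\,a\neq0\), so the implicit function theorem gives a smooth \(t_0(p)\) with \(t_0(0)=0\). I then translate \(I\mapsto I-\Phi(t_0(p),p)\) to remove the constant term. Both translations preserve \(X_p^-=\partial_t\), and they preserve the factorized form of \(h\) with the same positive \(g\) in the new coordinates.

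\emph{Parametrizing the slice.} The centered coefficients \(u_j(p)=\partial_t^j\Phi(0,p)/j!\), \(1\le j\le\nu-2\), vanish at \(p=0\). If their map has rank \(\nu-2\), choose a \((\nu-2)\)-dimensional parameter slice on which \(p\mapsto\mathbf u(p)\) is a local diffeomorphism, and re-parametrize the slice by \(\mathbf u\). Taylor's formula with integral remainder in \(t\), jointly smooth in \(\mathbf u\), gives
\[
  \Phi_{\mathbf u}(t)=\sum_{j=1}^{\nu-2}u_jt^j+a(\mathbf u)t^\nu+t^{\nu+1}R(t,\mathbf u),
  \qquad a(\mathbf u)=\frac{\partial_t^\nu\Phi_{\mathbf u}(0)}{\nu!},
\]
with \(a(0)=a\) and \(R\) smooth. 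This is \eqref{eq:prepared-cap-graph}.

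\emph{Finite-type bound.} The estimate \eqref{eq:prepared-cap-finite-type} follows from continuity: \(\partial_t^\nu\Phi_0(0)=\nu!\,a>0\), so on a small box \(|t|\le T\) and a small slice \(\partial_t^\nu\Phi_{\mathbf u}(t)\ge\nu!\,a/2=:\kappa_*\).

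The main obstacle is bookkeeping rather than a single hard estimate: every step must be jointly smooth in the parameters, and the translations and restriction to the slice must be compatible with the flow-box normalization. The compatibility works because both translations commute with \(\partial_t\), and the vanishing of \(\partial_t^{\nu-1}\Phi\) at \(t_0\) is applied to the already-straightened graph.
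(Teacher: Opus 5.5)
Your proposal is correct and follows essentially the same route as the paper. Both arguments use the implicit function theorem for the graph, center by solving \(\partial_t^{\nu-1}\Phi(t_0(p),p)=0\) using \(\partial_t^\nu\Phi(0,0)=\nu!\,a\neq0\), translate \(I\), pass to a rank slice, apply Taylor's formula and Hadamard division for \(g>0\), and finish with a continuity shrink for \(\kappa_*\). The only differences are cosmetic: you divide before centering (correctly checking that the translations preserve \(I-\Phi\)), and you obtain \(\kappa_*\) directly from joint continuity of \(\partial_t^\nu\Phi\) rather than from the expanded graph.
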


The preparation uses the implicit-function theorem, centering at the
unique zero of \(\partial_t^{\nu-1}\Phi\), the constant-rank theorem, and
Hadamard division.  Details are in Appendix~\ref{app:passage}.

\begin{lemma}[Weighted homogeneity and discriminant continuity]
\label[lemma]{lem:cap-weighted-homogeneity}
The cap in \eqref{eq:transverse-cap-functional} is continuous, including
where its positive set changes topology, and
\begin{equation}
  \mathcal C_{\nu,a}
  (r^\nu\rho,r^{\nu-1}u_1,\ldots,r^2u_{\nu-2})
  =r^{\nu+1}\mathcal C_{\nu,a}(\rho,\mathbf u).
  \label{eq:cap-weighted-homogeneity}
\end{equation}
\end{lemma}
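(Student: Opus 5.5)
The plan is to prove homogeneity first, by a change of variables, and then continuity by dominated convergence on a fixed compact interval. Neither step should require tracking the topology of the active set.

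\emph{Homogeneity.} Write the cap integrand as \((\rho-\Psi_{\mathbf u}(t))_+\), where \(\Psi_{\mathbf u}(t)=at^\nu+\sum_{j=1}^{\nu-2}u_jt^j\). For \(r>0\), substitute \(t=rs\) in the integral defining \(\mathcal C_{\nu,a}(\delta_r(\rho,\mathbf u))\). Each monomial satisfies \(r^{\nu-j}u_j(rs)^j=r^\nu u_js^j\), and \(a(rs)^\nu=r^\nu as^\nu\). Hence the integrand equals \(\bigl(r^\nu(\rho-\Psi_{\mathbf u}(s))\bigr)_+=r^\nu(\rho-\Psi_{\mathbf u}(s))_+\), since the positive part is positively homogeneous. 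The Jacobian \(\mathrm{d}t=r\,\mathrm{d}s\) supplies the remaining factor, which gives \eqref{eq:cap-weighted-homogeneity}. When \(\nu=2\) the vector \(\mathbf u\) is absent, and the same computation reproduces \eqref{eq:leading-grazing-cap} up to normalization.

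\emph{Finiteness and a uniform compact support.} Since \(a>0\) and \(\nu\) is even, \(\Psi_{\mathbf u}\) is coercive. Fix a compact set \(Q\) of parameters \((\rho,\mathbf u)\) and let \(M\) bound \(|\rho|\) and every \(|u_j|\) on \(Q\). For \(|t|\geq1\) we have \(\Psi_{\mathbf u}(t)\geq a|t|^\nu-M(\nu-2)|t|^{\nu-2}\), which exceeds \(M\) once \(|t|\geq T_Q:=\max\{1,\sqrt{(\nu-2)M/a+M/a}\}\) (or any comparable explicit bound). So for every \((\rho,\mathbf u)\in Q\) the integrand vanishes outside \([-T_Q,T_Q]\). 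On that interval it is bounded by \(M+\sup_{|t|\le T_Q,\,Q}|\Psi_{\mathbf u}(t)|\). In particular the cap is finite.

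\emph{Continuity.} The map \((t,\rho,\mathbf u)\mapsto(\rho-\Psi_{\mathbf u}(t))_+\) is a composition of a polynomial with the Lipschitz function \(x\mapsto x_+\), so it is jointly continuous. Given a sequence \((\rho_k,\mathbf u_k)\to(\rho,\mathbf u)\) inside \(Q\), the integrands converge pointwise on \([-T_Q,T_Q]\) and are uniformly bounded. Dominated convergence then gives convergence of the caps. Better still, \(|x_+-y_+|\leq|x-y|\) yields
\[|\mathcal C_{\nu,a}(\rho,\mathbf u)-\mathcal C_{\nu,a}(\rho',\mathbf u')|\leq 2T_Q\Bigl(|\rho-\rho'|+\sum_j T_Q^{\,j}|u_j-u'_j|\Bigr),\]
so the cap is locally Lipschitz.

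\emph{Discriminant levels.} No case analysis is needed where components of \(\{\rho>\Psi_{\mathbf u}\}\) are born or merge. The Lipschitz bound comes from the integrand, not from any endpoint formula, so it holds across those levels automatically. For instance, at a level equal to a critical value of \(\Psi_{\mathbf u}\), the new component has measure zero and contributes nothing. For \(a\) itself, if continuity in \(a>0\) is also wanted, the same argument applies, since \(T_Q\) can be chosen uniformly for \(a\) in a compact subset of \((0,\infty)\).

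I expect the main (mild) obstacle to be the uniform support bound: without it, pointwise convergence would not control the integrals near infinity. Coercivity from \(a>0\) and even \(\nu\) disposes of it. Everything else is routine.
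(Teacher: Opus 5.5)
Your proof is correct and follows the paper's own route: the substitution \(t=rs\) gives the weighted homogeneity; coercivity of the leading term \(at^\nu\) gives a common compact support on compact coefficient sets; and the inequality \(|x_+-y_+|\leq|x-y|\) (or dominated convergence) gives continuity across births and mergers of active components. Your explicit support radius \(T_Q\) and local Lipschitz constant simply make the paper's brief argument quantitative.
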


The substitution \(t=rs\) proves the identity; coercivity gives a common
compact support on compact coefficient sets, and
\(|x_+-y_+|\leq|x-y|\) gives continuity.  The complete compact-uniform
argument is recorded in Appendix~\ref{app:passage}.

\subsection{Localized active-region passage in transverse weights}

After shrinking once, fix
\(\mathcal Q=[-T,T]\times(-I_*,I_*)\) and suppose
\[
  X^-_{\mathbf u}=\partial_t,
  \qquad
  X^+_{\mathbf u}-X^-_{\mathbf u}=h_{\mathbf u}W_{\mathbf u},
  \qquad
  W_{\mathbf u}=A_{\mathbf u}\partial_t+B_{\mathbf u}\partial_I.
\]
The physical field is
\begin{equation}
  X_{\mathbf u}=\partial_t+(h_{\mathbf u})_+W_{\mathbf u},
  \label{eq:physical-transverse-cap-field}
\end{equation}
and the leading mismatch is
\begin{equation}
  \beta=dh_0(0,0)[W_0(0,0)]=g(0,0,0)B_0(0,0).
  \label{eq:transverse-cap-beta}
\end{equation}
Launch the physical trajectory at \((-T,q)\), and let
\(\mathcal D_{\mathrm{cap}}(q,\mathbf u)\) be its outgoing
\(I\)-coordinate minus that of the reference \(X^-\)-trajectory.

\begin{theorem}[Localized Active-Region Passage / Weighted Blow-up:
transverse value estimate]
\label{thm:transverse-cap-blow-up}
Assume \eqref{eq:prepared-cap-graph}--%
\eqref{eq:physical-transverse-cap-field}.  For every compact
\(K\subset\R\times\R^{\nu-2}\), there are \(C_K,r_K>0\) such that for
\((\rho,\mathbf u)\in K\) and \(0<r<r_K\), the physical trajectory stays
in \(\mathcal Q\), reaches \(t=T\), and
\begin{equation}
  \left|
  \mathcal D_{\mathrm{cap}}
  (r^\nu\rho,r^{\nu-1}u_1,\ldots,r^2u_{\nu-2})
  -\beta r^{\nu+1}\mathcal C_{\nu,a}(\rho,\mathbf u)
  \right|
  \leq C_Kr^{\nu+2}.
  \label{eq:physical-cap-weighted-limit}
\end{equation}
The estimate is uniform across every polynomial discriminant stratum in
\(K\), including births, mergers, and disappearances of positive
components.
\end{theorem}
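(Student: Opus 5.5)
Rescale the whole passage by the contact-jet dilation and compare the physical trajectory with the reference one through a Duhamel-type integral. Put \(t=rs\), \(I=r^\nu J\), and write the incoming level and coefficients as \(q=r^\nu\rho\), \((\mathbf u_r)_j=r^{\nu-j}u_j\). By \eqref{eq:prepared-cap-graph} and \eqref{eq:prepared-cap-finite-type},
\[
  \Phi_{\mathbf u_r}(rs)=r^\nu\bigl(a s^\nu+\textstyle\sum_j u_js^j\bigr)+O(r^{\nu+1}),
\]
uniformly for \(|s|\leq T/r\) on compact \(K\). Note that \(a(\mathbf u_r)=a+O(r^2)\), and the remainder term \(t^{\nu+1}R\) contributes \(O(r^{\nu+1}|s|^{\nu+1})\).

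The first step is \textbf{localization}. Coercivity, i.e.\ \(\partial_t^\nu\Phi\geq\kappa_*\) together with the small lower coefficients, gives a constant \(S_K\) with the following property. For \(|s|\geq S_K\), the rescaled seam height \(\Phi/r^\nu\) exceeds \(\rho+1\). On the outer region \(rS_K\leq|t|\leq T\), the full \(\Phi\) exceeds \(q+c\,t^\nu\) with \(c>0\). Consequently, a trajectory whose \(I\)-coordinate stays within \(O(r^{\nu+1})\) of \(q\) has \(h<0\) there. On \(h<0\) the field is exactly \(\partial_t\), so no drift occurs outside \([-rS_K,rS_K]\).

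A bootstrap then closes the argument. Let \(t_*\) be the first exit from the band \(|I-q|\leq Cr^{\nu+1}\). Inside the window, \(|h|\leq Cr^\nu\), so \((h)_+W\) has size \(O(r^\nu)\). Over time \(O(r)\) this displaces \(I\) by \(O(r^{\nu+1})\) and \(t\) by \(O(r^{\nu+1})\). Hence the band is never left, and the trajectory stays in \(\mathcal Q\) and reaches \(t=T\).

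The second step is the \textbf{value estimate}. Since \(\dot t=1+h_+A\), the function \(t\) is a valid time reparametrization, and
\[
  \frac{dI}{dt}=\frac{h_+B}{1+h_+A}=h_+B+O(r^{2\nu}).
\]
Integrate over \(|t|\leq rS_K\), using \eqref{eq:prepared-cap-switching-function}. Along the trajectory \(g(t,I,\mathbf u_r)=g(0,0,0)+O(r)\) and \(B=B_0(0,0)+O(r)\), while \(I=q+O(r^{\nu+1})\). Therefore
\[
  h_+=g(0,0,0)\,r^\nu\bigl(\rho-as^\nu-\textstyle\sum_ju_js^j\bigr)_++O(r^{\nu+1}),
\]
because \(x\mapsto x_+\) is \(1\)-Lipschitz. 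This Lipschitz bound is the point that makes the estimate blind to the topology of the positive set. Integrating with \(dt=r\,ds\) over the bounded window gives
\[
  \beta r^{\nu+1}\mathcal C_{a}(\rho,\mathbf u)+O(r^{\nu+2}),
\]
by \eqref{eq:transverse-cap-beta}.

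The \(O(r^2)\) correction in \(a(\mathbf u_r)\) is absorbed by \cref{lem:cap-weighted-homogeneity}: continuity of the cap on compacts gives a Lipschitz-in-coefficients bound, since the positive-part integrand is Lipschitz in \(a\) on a fixed support. Because the reference trajectory has constant \(I=q\), the difference of outgoing \(I\)-coordinates is exactly this integral. All constants depend only on \(K\) through \(S_K\) and sup norms, which gives uniformity across discriminant strata.

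The main obstacle is the localization and bootstrap uniformity. Two points need care. First, the outer region must stay inactive even though \(\Phi\) is controlled only by the finite-type bound, so one must show \(\Phi(t)-q>0\) for \(rS_K\leq|t|\leq T\) uniformly in \(K\). I would handle this by Taylor-comparing \(\Phi\) with \(at^\nu\) via the \(\nu\)th-derivative bound, using \(|u_j|r^{\nu-j}|t|^j\leq\varepsilon|t|^\nu\) once \(|t|\geq rS_K\). Second, the \(O(r)\) time shift of the entry is harmless, since it only perturbs the integrand within its Lipschitz error.
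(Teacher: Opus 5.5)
Your proposal is correct and follows the paper's proof essentially step by step: coercive localization of all seam events to \(|t|\lesssim r\), then a continuation/bootstrap argument giving \(I=r^\nu\rho+O(r^{\nu+1})\) on the whole box, then the rescaling \(t=rs\), and finally the inequality \(|x_+-y_+|\leq|x-y|\), which compares the physical integrand with the cap integrand uniformly across the discriminant. The differences are cosmetic. You use a thinner bootstrap band, and you localize via Taylor's formula with the \(\nu\)th-derivative bound instead of \(a(\mathbf u_r)\geq a/2\) plus box shrinking. Also, your displayed expansion of \(\Phi_{\mathbf u_r}(rs)\) with error \(O(r^{\nu+1})\) holds only on bounded \(s\)-windows, not for \(|s|\leq T/r\), but you only use it there.
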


\paragraph{Proof outline and the uniform step.}
Coercivity localizes the active set to \(|t|\leq M_Kr\), and continuation
gives \(I(t)=r^\nu\rho+O_K(r^{\nu+1})\).  In the scaled coordinate \(t=rs\),
the positive-part Lipschitz estimate compares the rescaled physical
integrand uniformly with the model cap integrand across the discriminant.
Appendix~\ref{app:passage} contains the localization and bootstrap proof.

\begin{corollary}[Regular transmission of the transverse cap]
\label[corollary]{cor:transverse-cap-transmission}
Let \(\mathcal R_{\mathbf u}\) be a smooth family of regular outgoing maps,
with \(\mathcal R_0(0)=0\) and \(L_0=\mathcal R_0'(0)\neq0\), and define
\[
  P_{\mathbf u}(q)=\mathcal R_{\mathbf u}
  (q+\mathcal D_{\mathrm{cap}}(q,\mathbf u)),
  \qquad P^-_{\mathbf u}(q)=\mathcal R_{\mathbf u}(q).
\]
Then, uniformly on every compact normalized set,
\begin{equation}
  P_{\mathbf u_r}(r^\nu\rho)-P^-_{\mathbf u_r}(r^\nu\rho)
  =L_0\beta r^{\nu+1}\mathcal C_{\nu,a}(\rho,\mathbf u)
  +O_K(r^{\nu+2}),
  \label{eq:transmitted-physical-cap-weighted-limit}
\end{equation}
where \(\mathbf u_r=(r^{\nu-1}u_1,\ldots,r^2u_{\nu-2})\).  Compatible
finite-time transitions on a common itinerary give the same estimate for
the actual first return provided the sections remain transverse and the
composed orbit has no earlier intersection with the incoming section, as in
the recognition hypotheses of \cref{prop:reference-return-composition}.
Root bounds additionally require derivative control.
\end{corollary}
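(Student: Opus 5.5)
The corollary follows by composing \cref{thm:transverse-cap-blow-up} with \cref{prop:weighted-value-transmission}. Set \(H(q,\mathbf u):=\mathcal D_{\mathrm{cap}}(q,\mathbf u)\) and \(\mathcal R=\mathcal R_{\mathbf u}\). By definition,
\[
  P_{\mathbf u}(q)-P^-_{\mathbf u}(q)
  =\mathcal R_{\mathbf u}(q+H(q,\mathbf u))-\mathcal R_{\mathbf u}(q)
  =\mathcal T_{\mathcal R}[H](q,\mathbf u).
\]
Take the weighted path \(p_r=\mathbf u_r=(r^{\nu-1}u_1,\ldots,r^2u_{\nu-2})\) and \(q_r=r^\nu\rho\).

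First, fix a compact \(K\) of normalized pairs \((\rho,\mathbf u)\) and check the hypotheses of \cref{prop:weighted-value-transmission}.
\begin{enumerate}
\item The path size is correct: since \(\nu-j\geq2\) for \(1\leq j\leq\nu-2\), we have \(\lVert\mathbf u_r\rVert=O(r^2)\), uniformly on \(K\).
\item The value estimate holds: \cref{thm:transverse-cap-blow-up} gives
\[
  H(q_r,\mathbf u_r)=\beta r^{\nu+1}\mathcal C_{\nu,a}(\rho,\mathbf u)+O_K(r^{\nu+2}),
\]
uniformly on \(K\) and across discriminant strata. This is exactly the hypothesis with \(\beta_0=\beta\) and \(\mathcal C=\mathcal C_{\nu,a}\).
\end{enumerate}
The conclusion of that proposition is then \eqref{eq:transmitted-physical-cap-weighted-limit}.

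If one wants to see the mechanism behind the proposition, it is a mean-value argument. Write
\[
  \mathcal T_{\mathcal R}[H]=H\int_0^1\mathcal R_{\mathbf u}'(q+\theta H)\dd\theta .
\]
Because \(\mathcal R\) is jointly \(C^1\) (in fact \(C^2\)), the integral equals \(L_0+O(|q|+\lVert\mathbf u\rVert+|H|)=L_0+O(r^2)\). Multiplying by \(H=O(r^{\nu+1})\) changes the leading term only by \(O(r^{\nu+3})\). The error \(O(r^{\nu+2})\) in \(H\) is multiplied by a bounded factor. Uniformity on \(K\) holds because \(\mathcal C_{\nu,a}\) is bounded on \(K\); this follows from the continuity in \cref{lem:cap-weighted-homogeneity}.

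Second, the actual first return. Compatible finite-time transitions along a common itinerary with transverse intermediate sections give a composite of exactly the form \(\mathcal R_{\mathbf u}(q+\mathcal D_{\mathrm{cap}})\). The no-earlier-return hypothesis then identifies this composite with the first-return map, by the same recognition argument as in the last part of \cref{prop:reference-return-composition}. That argument uses only the finite-time flow structure, not the particular form of the local correction. The same estimate therefore holds for the actual return. The remark about root bounds is not a claim to prove; it just notes that value control alone does not suffice.

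The main care point is to confirm that the uniformity of \cref{thm:transverse-cap-blow-up} over \(K\) carries through the transition. This works because the \(O(r^2)\) bound on the transition's derivative depends only on \(\sup_K(|\rho|+\lVert\mathbf u\rVert)\), and not on which discriminant stratum the pair \((\rho,\mathbf u)\) lies in.
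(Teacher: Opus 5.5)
Your proposal is correct and matches the paper's proof, which is the one-line application of \cref{prop:weighted-value-transmission} to \cref{thm:transverse-cap-blow-up}. It uses the weighted path \(p_r=\mathbf u_r=O(r^2)\) and \(\beta_0=\beta\), exactly as you do. Your unpacking of the mechanism, the divided-difference factor \(\int_0^1\mathcal R_{\mathbf u}'(q+\theta H)\dd\theta=L_0+O(r^2)\), is the same argument the paper gives in its appendix proof of that proposition.
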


\begin{proof}
Apply \cref{prop:weighted-value-transmission} to
\cref{thm:transverse-cap-blow-up}.
\end{proof}

\begin{corollary}[Central fixed-order coefficient]
\label[corollary]{cor:central-cap-coefficient}
On the central stratum \(\mathbf u=0\),
\begin{equation}
  \mathcal C_{\nu,a}(\rho,0)
  =\frac{2\nu}{\nu+1}a^{-1/\nu}\rho_+^{1+1/\nu}.
  \label{eq:central-cap-evaluation}
\end{equation}
Consequently,
\begin{equation}
  \mathcal D_{\mathrm{cap}}(q,0)
  =\beta\frac{2\nu}{\nu+1}a^{-1/\nu}q^{1+1/\nu}
  +O(q^{1+2/\nu})
  \quad(q\downarrow0).
  \label{eq:central-cap-physical-asymptotic}
\end{equation}
With \(a=\kappa_\nu/\nu!\), this is
\eqref{eq:grazing-leading-coefficient}.  On this central slice,
\cref{thm:localized-active-passage} strengthens the displayed value
asymptotic to the full smooth ramified expansion.
\end{corollary}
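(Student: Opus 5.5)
We prove the statement in three steps: evaluate the cap exactly, insert the evaluation into \cref{thm:transverse-cap-blow-up}, and match constants with \cref{thm:localized-active-passage}.

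\emph{Step 1: exact evaluation at \(\mathbf u=0\).} The integrand is \((\rho-at^\nu)_+\). For \(\rho\leq0\) it vanishes identically, since \(a>0\) and \(\nu\) is even, so \(t^\nu\geq0\); hence the cap is zero. For \(\rho>0\) the positive set is \((-T,T)\) with \(T=(\rho/a)^{1/\nu}\). We then compute
\[
  \int_{-T}^{T}(\rho-at^\nu)\dd t
  =2\rho T-\frac{2aT^{\nu+1}}{\nu+1}
  =2\rho T\Bigl(1-\frac1{\nu+1}\Bigr)
  =\frac{2\nu}{\nu+1}a^{-1/\nu}\rho^{1+1/\nu}.
\]
Together these cases give \eqref{eq:central-cap-evaluation} with \(\rho_+\). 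As a consistency check, this agrees with the weighted homogeneity of \cref{lem:cap-weighted-homogeneity}.

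\emph{Step 2: the physical asymptotic.} Take \(\mathbf u=0\), \(\rho=1\), and \(r=q^{1/\nu}\), so \(q=r^\nu\). Apply \cref{thm:transverse-cap-blow-up} with the compact set \(K=\{(1,0)\}\). Its estimate \eqref{eq:physical-cap-weighted-limit} gives
\[
  \bigl|\mathcal D_{\mathrm{cap}}(q,0)-\beta q^{1+1/\nu}\mathcal C_{\nu,a}(1,0)\bigr|\leq C_Kq^{(\nu+2)/\nu}=C_Kq^{1+2/\nu}.
\]
This bound is uniform for small \(q>0\). Substituting Step 1 for \(\mathcal C_{\nu,a}(1,0)\) yields \eqref{eq:central-cap-physical-asymptotic}.

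\emph{Step 3: matching with the fixed-order passage.} With \(a=\kappa_\nu/\nu!\) we have \(a^{-1/\nu}=(\nu!/\kappa_\nu)^{1/\nu}\). This turns the leading coefficient in Step 2 into \(\mathfrak g_\nu\) of \eqref{eq:grazing-leading-coefficient}.

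To justify the identification \(a=\kappa_\nu/\nu!\), note the following. In the prepared flow box, \(X^-=\partial_t\) and \(h=g\,(I-\Phi)\) with \(g>0\). Differentiating \(\nu\) times along the flow at the contact point gives
\[
  (X^-)^\nu h=-g\,\nu!\,a .
\]
So \(a\) equals \(\kappa_\nu/\nu!\) up to the positive factor \(g(0,0,0)\). This factor is absorbed consistently, because the same \(g\) enters \(\beta\) through \eqref{eq:transverse-cap-beta}. It can also be normalized to \(1\) by taking \(I\) normalized as in \eqref{eq:grazing-first-integral}, so that \(dI=dh\) at \(z\) and \(g(0,0,0)=1\).

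\emph{Main obstacle.} The constant matching in Step 3 is the only delicate point, because \(\beta\) and \(a\) are defined with different normalizations in the two sections. I would resolve it with the normalization \(dI(z)=dh(z)\), which forces \(g(0,0,0)=1\). The final sentence of the statement needs no new argument: on \(\mathbf u=0\) the hypotheses of \cref{thm:localized-active-passage} hold, and that proposition supplies the full smooth ramified form \eqref{eq:grazing-expanded-transition}. Its leading term coincides with the one found in Step 2.
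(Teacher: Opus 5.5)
Your proof is correct and follows the paper's route: you evaluate the central cap exactly (the paper uses the substitution \(t=(\rho/a)^{1/\nu}s\), you integrate directly), then apply \cref{thm:transverse-cap-blow-up} at the normalized point \((1,0)\) with \(r=q^{1/\nu}\). One wording fix in Step~3: \(\beta=dh(z)[W(z)]\) is the same intrinsic quantity in both settings, so \(g(0,0,0)\) is not absorbed through \(\beta\); an unnormalized box gives leading coefficient \(g(0,0,0)^{1/\nu}\mathfrak g_\nu\), which is a section-coordinate effect as in \eqref{eq:section-coordinate-coefficient-law}, and it is your normalization \(dI(z)=dh(z)\), i.e.\ \(g(0,0,0)=1\), that realizes the hypothesis \(a=\kappa_\nu/\nu!\).
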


\begin{proof}
Substitute \(t=(\rho/a)^{1/\nu}s\) in the cap, then use
\cref{thm:transverse-cap-blow-up} with normalized point \((1,0)\).
\end{proof}

\subsection{A one-parameter tilt and the quadratic crossover}

For \(\nu\geq4\), restrict the contact-jet parameters to the linear
coefficient.  Let
\(b\neq0\) and put
\begin{equation}
  V_\eta(t)=a t^\nu+b\eta t,
  \qquad
  \mathcal C_\nu^{\mathrm{tilt}}(\rho,\eta)
  =\int_\R(\rho-V_\eta(t))_+\dd t.
  \label{eq:tilted-cap}
\end{equation}

\begin{figure}[tbp]
  \centering
  \includegraphics[width=0.96\textwidth]{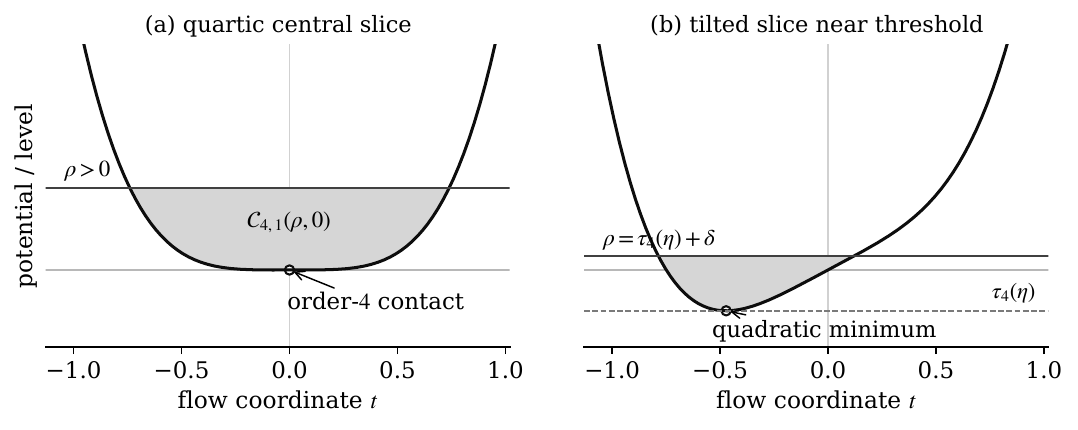}
  \caption{Two potential--level geometries underlying the weighted
  crossover for \(V_\eta(t)=t^4+\eta t\); the shaded areas represent the
  cap.  \Cref{prop:tilted-cap-crossover} supplies the exact weighted
  dilation and the \(5/4\) and \(3/2\) onset laws.  The displayed parameter
  values, \(\rho=0.30\) on the central slice and \(\eta=0.42\) with
  \(\rho=\tau_4(0.42)+0.20\) on the tilted slice, show the corresponding
  local shapes.}
  \label{fig:cap-crossover}
\end{figure}

\begin{proposition}[Threshold and quadratic crossover of the tilted cap]
\label[proposition]{prop:tilted-cap-crossover}
For every \(\eta\neq0\), \(V_\eta\) has a unique global minimum at
\begin{equation}
  t_\eta=-\sgn(b\eta)
  \left(\frac{|b\eta|}{a\nu}\right)^{1/(\nu-1)}.
  \label{eq:tilted-cap-minimizer}
\end{equation}
Its threshold and curvature are
\begin{align}
  \tau_\nu(\eta)
  &=-(\nu-1)a
  \left(\frac{|b\eta|}{a\nu}\right)^{\nu/(\nu-1)},
  \label{eq:tilted-cap-threshold}\\
  \kappa_{\mathrm{eff}}(\eta)
  &=a\nu(\nu-1)
  \left(\frac{|b\eta|}{a\nu}\right)^{(\nu-2)/(\nu-1)}.
  \label{eq:tilted-cap-effective-curvature}
\end{align}
The cap vanishes for \(\rho\leq\tau_\nu(\eta)\).  For each fixed
\(\eta\neq0\), as \(\delta=\rho-\tau_\nu(\eta)\downarrow0\),
\begin{equation}
  \mathcal C_\nu^{\mathrm{tilt}}
  (\tau_\nu(\eta)+\delta,\eta)
  =\frac{4\sqrt2}{3\sqrt{\kappa_{\mathrm{eff}}(\eta)}}
  \delta^{3/2}+O_\eta(\delta^2).
  \label{eq:tilted-cap-three-halves}
\end{equation}
Moreover,
\begin{equation}
  \mathcal C_\nu^{\mathrm{tilt}}(r^\nu\rho,r^{\nu-1}\eta)
  =r^{\nu+1}\mathcal C_\nu^{\mathrm{tilt}}(\rho,\eta).
  \label{eq:tilted-cap-crossover-scaling}
\end{equation}
Thus the crossover scales are
\begin{equation}
  |t|\asymp|\eta|^{1/(\nu-1)},
  \qquad |\rho|\asymp|\eta|^{\nu/(\nu-1)}.
  \label{eq:tilted-cap-crossover-scales}
\end{equation}
\end{proposition}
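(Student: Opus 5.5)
The statement is an explicit computation with a single-variable coercive polynomial, so the plan is to treat each claim directly by calculus on \(V_\eta(t)=at^\nu+b\eta t\) and then derive the scalings from \cref{lem:cap-weighted-homogeneity}.

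\emph{Step 1: the minimum.} Since \(\nu\) is even and \(a>0\), \(V_\eta\) is coercive. Its derivative \(V_\eta'(t)=a\nu t^{\nu-1}+b\eta\) is strictly increasing, because \(\nu-1\) is odd. Hence there is exactly one critical point, the global minimum, where \(t^{\nu-1}=-b\eta/(a\nu)\); this gives \eqref{eq:tilted-cap-minimizer}. Substituting \(b\eta t_\eta=-a\nu t_\eta^{\nu}\) yields \(\tau_\nu(\eta)=V_\eta(t_\eta)=a(1-\nu)|t_\eta|^\nu\), which is \eqref{eq:tilted-cap-threshold}. Then \(V_\eta''(t_\eta)=a\nu(\nu-1)|t_\eta|^{\nu-2}\) gives \eqref{eq:tilted-cap-effective-curvature}, and this curvature is positive for \(\eta\neq0\) (also when \(\nu=2\), where the tilt is absent by convention; the interesting case is \(\nu\ge4\)). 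Because \(\tau_\nu(\eta)\) is the global minimum value, the integrand \((\rho-V_\eta)_+\) vanishes identically when \(\rho\le\tau_\nu(\eta)\), so the cap vanishes there.

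\emph{Step 2: the \(3/2\) onset.} Write \(V_\eta(t_\eta+x)-\tau_\nu(\eta)=\tfrac12\kappa x^2(1+O_\eta(x))\) with \(\kappa=\kappa_{\mathrm{eff}}(\eta)\). By a Morse-lemma change of variables \(y=x\sqrt{1+O(x)}\), smooth near \(0\) with \(dx/dy=1+O_\eta(y)\), the active set for small \(\delta\) is a single interval \(|y|<\sqrt{2\delta/\kappa}\). This uses that the minimum is unique and that coercivity keeps \(V_\eta\) away from \(\tau_\nu\) outside a neighbourhood of \(t_\eta\). The cap then equals
\[
\int_{|y|<\sqrt{2\delta/\kappa}}\bigl(\delta-\tfrac12\kappa y^2\bigr)\bigl(1+O_\eta(y)\bigr)\dd y .
\]
The main term is \(\tfrac{4}{3}\delta\sqrt{2\delta/\kappa}=\tfrac{4\sqrt2}{3\sqrt\kappa}\delta^{3/2}\). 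The correction is bounded by \(\delta\cdot O(\delta^{1/2})\cdot O(\delta^{1/2})=O_\eta(\delta^2\)); in fact the odd part integrates to zero, but the crude bound already suffices. This step is the main obstacle: one must justify the localization to a single interval and the uniformity of the Morse change of variables for fixed \(\eta\). I would handle it by taking the neighbourhood of \(t_\eta\) on which \(V_\eta''\ge\kappa/2\), and using that outside it \(V_\eta-\tau_\nu\) exceeds a positive constant.

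\emph{Step 3: scaling.} The identity \eqref{eq:tilted-cap-crossover-scaling} is the special case \(\mathbf u=(r^{\nu-1}b\eta,0,\dots)\) of \cref{lem:cap-weighted-homogeneity}. Directly, substitute \(t=rs\); then \(r^\nu\rho-ar^\nu s^\nu-br^{\nu-1}\eta\, rs=r^\nu(\rho-V_\eta(s))\), and \(\dd t=r\dd s\). The crossover scales \eqref{eq:tilted-cap-crossover-scales} follow by reading off \(|t_\eta|\asymp|\eta|^{1/(\nu-1)}\) and \(|\tau_\nu|\asymp|\eta|^{\nu/(\nu-1)}\) from Step 1. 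These are exactly the scales at which the weighted dilation normalizes \(\eta\) to order one, so they separate the central \(1+1/\nu\) regime from the quadratic \(3/2\) regime.
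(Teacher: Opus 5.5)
Your proof is correct and follows the paper's argument. You get the unique critical point from the strictly increasing $V_\eta'$, the threshold and curvature from the identity $b\eta=-a\nu t_\eta^{\nu-1}$, the $\delta^{3/2}$ onset from a Taylor expansion at $t_\eta$, and the weighted scaling from the substitution $t=rs$. The only variation is in Step 2. There you straighten the quadratic by a Morse change of variables, whereas the paper rescales $x=\sqrt\delta\,y$ and applies $|x_+-y_+|\leq|x-y|$. Both give the stated $O_\eta(\delta^2)$ error, and your version even improves to $O_\eta(\delta^{5/2})$ by the odd-part cancellation. Your localization concern is also unnecessary: $V_\eta''=a\nu(\nu-1)t^{\nu-2}\geq0$ makes $V_\eta$ convex, so every sublevel set is a single interval.
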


\begin{proof}
Since \(V_\eta'(t)=a\nu t^{\nu-1}+b\eta\) is strictly increasing, it has
the unique zero \eqref{eq:tilted-cap-minimizer}.  The identity
\(b\eta=-a\nu t_\eta^{\nu-1}\) gives the threshold, and direct
differentiation gives the curvature.  Taylor expansion at \(t_\eta\)
gives
\[
  V_\eta(t_\eta+x)-\tau_\nu(\eta)
  =\tfrac12\kappa_{\mathrm{eff}}(\eta)x^2+O_\eta(|x|^3).
\]
The active interval has \(|x|=O_\eta(\sqrt\delta)\).  Substitution
\(x=\sqrt\delta\,y\) and the positive-part Lipschitz inequality give
\[
  \delta^{3/2}\int_\R
  \left(1-\tfrac12\kappa_{\mathrm{eff}}(\eta)y^2\right)_+\dd y
  +O_\eta(\delta^2),
\]
whose integral is the coefficient in
\eqref{eq:tilted-cap-three-halves}.  Weighted homogeneity gives
\eqref{eq:tilted-cap-crossover-scaling}; the stated scales follow from the
minimum formulas.
\end{proof}

\begin{corollary}[Physical meaning of the crossover]
\label[corollary]{cor:physical-tilted-cap-crossover}
On the slice \(\mathbf u=(b\eta,0,\ldots,0)\), uniformly on compact
normalized \((\rho,\eta)\)-sets,
\begin{equation}
  \mathcal D_{\mathrm{cap}}
  (r^\nu\rho,r^{\nu-1}b\eta,0,\ldots,0)
  =\beta r^{\nu+1}\mathcal C_\nu^{\mathrm{tilt}}(\rho,\eta)
  +O_K(r^{\nu+2}).
  \label{eq:physical-tilted-cap-weighted-limit}
\end{equation}
For the physical prepared graph, write
\[
  \Phi_\eta(t)=\Phi_{(b\eta,0,\ldots,0)}(t).
\]
For every fixed sufficiently small \(\eta\neq0\), this graph has a selected
nondegenerate minimum \(\widehat t_\eta\) near the model minimum
\(t_\eta\).  Put
\[
  \widehat\tau_\eta=\Phi_\eta(\widehat t_\eta),
  \qquad
  \widehat\kappa_\eta=\Phi_\eta''(\widehat t_\eta)>0,
  \qquad
  \widehat\beta_\eta
  =dh_\eta(\widehat t_\eta,\widehat\tau_\eta)[W_\eta].
\]
Then the prepared tilted seam has ordinary quadratic grazing there and
\begin{equation}
  \mathcal D_{\mathrm{cap}}
  (\widehat\tau_\eta+\delta,(b\eta,0,\ldots,0))
  =\frac{4\sqrt2\,\widehat\beta_\eta}
  {3\sqrt{\widehat\kappa_\eta}}\delta^{3/2}
  +o_\eta(\delta^{3/2}).
  \label{eq:physical-tilted-cap-three-halves}
\end{equation}
As \(\eta\to0\) with its sign fixed,
\[
  \frac{\widehat t_\eta}{t_\eta}\longrightarrow1,
  \qquad
  \frac{\widehat\kappa_\eta}{\kappa_{\mathrm{eff}}(\eta)}
  \longrightarrow1,
  \qquad
  \frac{\widehat\tau_\eta}{\tau_\nu(\eta)}\longrightarrow1.
\]
This fixed-tilt asymptotic is pointwise; its constants may degenerate as
\(\eta\to0\).  Equation \eqref{eq:physical-tilted-cap-weighted-limit} is
the uniform joint crossover statement.
\end{corollary}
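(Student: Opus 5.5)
The plan has three parts: obtain the uniform weighted estimate directly from the general cap theorem, then treat a fixed tilt as an ordinary quadratic grazing, and finally compare the physical minimum data with the model data.

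\emph{Uniform weighted estimate.} Equation \eqref{eq:physical-tilted-cap-weighted-limit} is \cref{thm:transverse-cap-blow-up} restricted to compact normalized sets of the form \(K=\{(\rho,(b\eta,0,\ldots,0))\}\). The cap on that slice is exactly \(\mathcal C_\nu^{\mathrm{tilt}}(\rho,\eta)\), since \(\mathcal C_{\nu,a}(\rho,(b\eta,0,\ldots,0))=\int(\rho-at^\nu-b\eta t)_+\dd t\). Nothing beyond matching notation is needed here.

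\emph{Selected minimum.} For fixed small \(\eta\neq0\), I would work in rescaled variables. Put \(r=|\eta|^{1/(\nu-1)}\) and \(t=rs\). Then \(r^{-\nu}\Phi_\eta(rs)\) equals \(as^\nu+b\,\sgn(\eta)s\) plus the terms coming from \(a(\mathbf u)-a\) and from \(t^{\nu+1}R\), and both vanish in \(C^2\) on compact \(s\)-sets as \(r\to0\). The model \(as^\nu+b\sgn(\eta)s\) has a unique nondegenerate minimum \(s_\pm\), the value of \(t_\eta/r\) for \(\sgn\eta=\pm1\). The implicit function theorem applied to the derivative equation then gives a unique critical point \(\widehat s\) near \(s_\pm\), with \(\widehat s\to s_\pm\). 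Scaling back yields three facts:
\begin{itemize}
\item \(\widehat t_\eta/t_\eta\to1\);
\item \(\widehat\kappa_\eta=r^{\nu-2}\bigl(\partial_s^2\text{model}+o(1)\bigr)\), so \(\widehat\kappa_\eta/\kappa_{\mathrm{eff}}\to1\);
\item \(\widehat\tau_\eta=r^\nu(\text{model min}+o(1))\), so \(\widehat\tau_\eta/\tau_\nu\to1\), because the model minimum value is nonzero and the model curvature is nonzero at \(s_\pm\).
\end{itemize}
To call this the selected minimum, i.e.\ the global minimum on the box, I would use \eqref{eq:prepared-cap-finite-type}: \(\partial_t^\nu\Phi>0\) gives at most \(\nu-1\) critical points. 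The cleaner route is rescaled uniform convexity. For \(\nu\) even the model satisfies \(\partial_s^2>0\) away from \(s=0\), and the linear tilt forces the derivative to be nonzero near \(s=0\), so \(\widehat s\) is the unique critical point on a large compact set. Outside that set, coercivity keeps \(\Phi\) above \(\widehat\tau_\eta\).

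\emph{Fixed-tilt \(3/2\) law.} At \((\widehat t_\eta,\widehat\tau_\eta)\) the seam is tangent to the \(X^-\)-orbit \(I=\widehat\tau_\eta\) with contact of order two and \(\Phi''>0\). In the oriented convention this is an ordinary quadratic grazing point. I would translate coordinates to center at \(\widehat t_\eta\) and apply \cref{thm:localized-active-passage} with \(\nu=2\) and \(p\) frozen, using the incoming level \(\rho=\delta\) measured relative to \(\widehat\tau_\eta\). Two normalizations need attention.
\begin{itemize}
\item The first integral is \(I-\widehat\tau_\eta\), with \(dI\) versus \(dh=g\,(dI-\Phi'dt)\); at the contact point \(\Phi'=0\), so \(dh=g\,dI\). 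The factor \(g\) can be absorbed into \(h\) by \cref{lem:contact-jet-triangular-invariance}, which rescales \(\kappa_2\) and \(\beta\) consistently.
\item With \(\kappa_2/2=\widehat\kappa_\eta/2\), formula \eqref{eq:grazing-leading-coefficient} at \(\nu=2\) gives \(\frac43(2/\widehat\kappa_\eta)^{1/2}\widehat\beta_\eta=\frac{4\sqrt2}{3\sqrt{\widehat\kappa_\eta}}\widehat\beta_\eta\), which is the coefficient in \eqref{eq:physical-tilted-cap-three-halves}.
\end{itemize}
One further point: the outgoing difference must be measured on the same sections \(t=\pm T\). The passage from the local grazing box to the full box is a regular \(X^-\) flow. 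The correction \(\mathcal D_{\mathrm{gr}}\) is in \(I\)-coordinates, and the flow \(\partial_t\) preserves \(I\) outside the active set, so \(L_0=1\) in \cref{prop:reference-return-composition}. The remainder is \(O_\eta(\delta^2)\), hence \(o_\eta(\delta^{3/2})\).

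\emph{Main obstacle.} I expect the hardest part to be this last matching step. One must check that for fixed \(\eta\) the only active set at levels just above \(\widehat\tau_\eta\) is the single interval near \(\widehat t_\eta\), so that no other near-contact contributes. One must also check that \(\widehat\beta_\eta\), defined with \(dh_\eta\) and the chosen normalization \(dI=dh\), matches \eqref{eq:grazing-beta} after the \(g\)-rescaling. Both follow from the uniqueness of the minimum established above. The nonuniformity as \(\eta\to0\) is acknowledged in the statement, so no uniform constants need to be tracked.
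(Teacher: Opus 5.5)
Your proposal is correct and follows the paper's own argument. You restrict \cref{thm:transverse-cap-blow-up} to the tilt slice, locate $\widehat t_\eta$ by the scaling $t=|\eta|^{1/(\nu-1)}s$ and the implicit-function theorem, and then translate to the nondegenerate minimum and apply the $\nu=2$ case of \cref{thm:localized-active-passage}.

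Your normalization, dividing $h$ by $g>0$, is equivalent to the paper's rescaled transverse coordinate: it leaves $(h)_+W$ and $\widehat\beta_\eta$ unchanged (so $\beta$ is invariant rather than rescaled) while turning $\kappa_2$ into $\widehat\kappa_\eta$. Your global-minimum check can also be shortened using $\Phi_\eta''(t)=t^{\nu-2}\bigl[\nu(\nu-1)a+O(t)\bigr]\geq0$, as in the paper's remark on the prepared pure tilt.
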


\begin{proof}
The weighted statement is the transverse passage theorem restricted to the
tilt slice.  For the physical graph, scale
\(t=|\eta|^{1/(\nu-1)}u\) in \(\Phi_\eta'(t)=0\).  The prepared remainder
is lower order in the scaled equation, while the model critical point is
simple.  The implicit-function theorem gives \(\widehat t_\eta\) and the
three displayed comparisons.  Translate to that nondegenerate minimum and
apply the quadratic case of \cref{thm:localized-active-passage} in the
smoothly rescaled transverse coordinate.  This gives
\eqref{eq:physical-tilted-cap-three-halves} in the original \(I\)-coordinate.
Continuity keeps \(\widehat\beta_\eta\neq0\) when the central mismatch is
nonzero.
\end{proof}

The value estimate remains uniform for general multiwell cap geometry.
Derivative control follows from the finite physical event discriminant
provided by the prepared finite-type bound
\eqref{eq:prepared-cap-finite-type}.  The multi-event argument in
\cref{thm:prepared-multiwell-boundary-curvature,%
thm:physical-multiwell-cyclicity} supplies derivative
monotonicity and the sharp physical root bound on the complete prepared
contact-jet unfolding.

\section{Exact and derivative-controlled transverse cyclicity}
\label{sec:transverse-cyclicity}

The weighted passage theorem gives a uniform value estimate across the
contact-jet unfolding.  A root count also requires derivative control.  We
first prove active-side convexity for the exact polynomial cap, then give a
value-only counterexample, and finally recover derivative monotonicity from
all moving entry and exit events of the prepared physical passage.

\subsection{Sublevel geometry of the exact cap}

Retain \cref{eq:transverse-cap-functional} and write
\begin{equation}
  V_{\mathbf u}(t)=at^\nu+\sum_{j=1}^{\nu-2}u_jt^j,
  \qquad
  \tau(\mathbf u)=\min_{t\in\R}V_{\mathbf u}(t),
  \label{eq:cap-potential-threshold}
\end{equation}
and
\begin{equation}
  \mathcal L(q,\mathbf u)
  =\left|\{t\in\R:V_{\mathbf u}(t)<q\}\right|.
  \label{eq:cap-sublevel-length}
\end{equation}

\begin{lemma}[Positive-part cap and quantitative sublevel growth]
\label[lemma]{lem:cap-strong-convexity}
The map \(q\mapsto\mathcal C_{\nu,a}(q,\mathbf u)\) is \(C^1\), vanishes
together with its first derivative for \(q\leq\tau(\mathbf u)\), and
\begin{equation}
  \partial_q\mathcal C_{\nu,a}(q,\mathbf u)
  =\mathcal L(q,\mathbf u).
  \label{eq:cap-first-derivative-length}
\end{equation}
There are \(C>0\) and a neighbourhood of the origin such that, whenever
\(q_2>q_1>\tau(\mathbf u)\),
\begin{equation}
  \mathcal L(q_2,\mathbf u)-\mathcal L(q_1,\mathbf u)
  \geq C^{-1}r^{1-\nu}(q_2-q_1),
  \label{eq:cap-strong-convexity}
\end{equation}
where
\begin{equation}
  r=\max\left\{|q_1|^{1/\nu},|q_2|^{1/\nu},
  |u_j|^{1/(\nu-j)}:1\leq j\leq\nu-2\right\}.
  \label{eq:cap-pair-gauge}
\end{equation}
The estimate holds through critical values and its modulus diverges
uniformly toward the central contact.
\end{lemma}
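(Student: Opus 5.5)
The plan is to prove the three claims in order: the \(C^1\) structure and the derivative formula \eqref{eq:cap-first-derivative-length}, then reduction of \eqref{eq:cap-strong-convexity} to a unit-gauge statement by weighted homogeneity, and finally a compactness argument on the normalized set.

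\textbf{Step 1: first derivative.} Fix \(\mathbf u\). Coercivity of \(V_{\mathbf u}\) (from \(a>0\) and \(\nu\) even) places the integrand in a compact \(t\)-support, locally uniformly in \(q\). The map \(q\mapsto(q-V_{\mathbf u}(t))_+\) is convex and Lipschitz with constant \(1\), with derivative \(\mathbf 1\{V_{\mathbf u}(t)<q\}\) except on the level set \(\{V_{\mathbf u}=q\}\). That level set is finite, since \(V_{\mathbf u}\) is a nonconstant polynomial. Dominated convergence applied to difference quotients then gives \(\partial_q\mathcal C_{\nu,a}=\mathcal L(q,\mathbf u)\).

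\textbf{Step 2: continuity of \(\mathcal L\) and vanishing below threshold.} For \(q_k\to q\), the indicators converge pointwise off the null level set, so \(\mathcal L\) is continuous in \(q\), including at critical values. Hence \(\mathcal C_{\nu,a}\in C^1\). For \(q\leq\tau(\mathbf u)\) the open sublevel set is empty, so both the cap and \(\mathcal L\) vanish there.

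\textbf{Step 3: reduction to unit gauge.} The substitution \(t=rs\) gives
\[
  \mathcal L(r^\nu q,\delta_r\mathbf u)=r\,\mathcal L(q,\mathbf u),
\]
and the gauge \eqref{eq:cap-pair-gauge} scales by \(r\) under \(\delta_r\). It therefore suffices to prove
\[
  \mathcal L(q_2,\mathbf u)-\mathcal L(q_1,\mathbf u)\geq C^{-1}(q_2-q_1)
\]
for all \((q_1,q_2,\mathbf u)\) with gauge exactly \(1\). The factor \(r^{1-\nu}\) is recovered as \(r\cdot r^{-\nu}\), where \(r^{-\nu}\) is the scaling of \(q_2-q_1\). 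In particular, the bound diverges as \(r\to0\). The gauge-one set is compact, and no neighbourhood restriction is actually needed beyond this.

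\textbf{Step 4: unit-gauge estimate (the main step).} By the coarea (layer-cake) formula, for \(q_2>q_1>\tau\),
\[
  \mathcal L(q_2)-\mathcal L(q_1)
  =\bigl|\{q_1\leq V_{\mathbf u}<q_2\}\bigr|
  =\int_{q_1}^{q_2}\sum_{V_{\mathbf u}(t)=s}\frac{1}{|V'_{\mathbf u}(t)|}\,\dd s.
\]
This holds up to the finitely many critical values, which form a null set. For each regular level \(s\in(q_1,q_2)\) above \(\tau\), the level set is nonempty. The level set lies in a compact interval \(|t|\leq M\) that is uniform over the gauge-one set together with \(|s|\leq1\), because coercivity bounds the sublevel sets uniformly. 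On that interval \(|V'_{\mathbf u}|\leq C\), again uniformly. Each regular level therefore contributes at least \(C^{-1}\), which gives the claim.

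The main obstacle is making this uniform across births and mergers of components. The coarea route avoids the issue, because the bound uses only a single preimage together with the sup bound on \(|V'|\). It never requires a lower bound on \(|V'|\), so degenerate critical levels are harmless: they form a finite, measure-zero set of \(s\).
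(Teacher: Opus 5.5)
Your proposal is correct and follows essentially the paper's argument. You obtain \(\partial_q\mathcal C_{\nu,a}=\mathcal L\) by dominated convergence, then use the rescaling \(t=rs\) with coercivity to confine every relevant level root to a uniform window where \(|V_{\mathbf u}'|\leq C_0r^{\nu-1}\), and finally apply the one-dimensional coarea formula, with the finitely many critical values discarded as a null set. Normalizing to gauge one first and applying the coarea identity directly to the increment \(\lvert\{q_1\leq V_{\mathbf u}<q_2\}\rvert\) only repackages the paper's argument, and exact homogeneity correctly makes your constant global rather than merely local.
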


\begin{proof}
Dominated convergence gives \eqref{eq:cap-first-derivative-length}, and the
sublevel length tends to zero at the global minimum.  For
\(q\in[q_1,q_2]\), put \(t=rs\).  The normalized coefficients lie in a
fixed compact set, so coercivity places every real level root in
\(|s|\leq M\).  Hence
\begin{equation}
  |V_{\mathbf u}'(t)|\leq C_0r^{\nu-1}
  \label{eq:cap-level-slope-bound}
\end{equation}
at every relevant root.  For a regular value, the one-dimensional coarea
formula \cite[Theorem~3.2.22]{Federer1969} gives
\begin{equation}
  \partial_q\mathcal L(q,\mathbf u)
  =\sum_{V_{\mathbf u}(t)=q}\frac1{|V_{\mathbf u}'(t)|}.
  \label{eq:cap-coarea-curvature}
\end{equation}
There are at least two outer level roots.  Thus the sum is bounded below by
\(C_1^{-1}r^{1-\nu}\).  Decompose into the finitely many monotonicity
intervals of \(V_{\mathbf u}\), change variables on each, and use monotone
convergence.  Critical values form a null set, and internal wells only add
terms, proving \eqref{eq:cap-strong-convexity} even across births and
mergers.
\end{proof}

\begin{theorem}[Sharp cyclicity for the exact full transverse cap]
\label{thm:exact-transverse-cap-cyclicity}
Let \(n\geq2\), let \(\nu\geq4\) be even, and suppose
\begin{equation}
  F(q,p,\mathbf u)
  =S(q,p,\mathbf u)+d(p,\mathbf u)\mathcal C_{\nu,a}(q,\mathbf u),
  \label{eq:exact-transverse-cap-family}
\end{equation}
where \(S\) is jointly \(C^{n+1}\), \(d\) is jointly \(C^2\), and
\begin{equation}
  \begin{aligned}
    \partial_q^jS(0,0,0)&=0 &&(0\leq j<n),\\
    c&=\frac{\partial_q^nS(0,0,0)}{n!}\neq0,
    &d_0&=d(0,0)\neq0.
  \end{aligned}
  \label{eq:exact-cap-central-data}
\end{equation}
Then
\begin{equation}
  \Cyc_{(0,0,0)}(F)
  \leq
  \begin{cases}
    n,&cd_0>0,\\
    n+1,&cd_0<0.
  \end{cases}
  \label{eq:exact-transverse-cap-bound}
\end{equation}
If the central coefficient map
\begin{equation}
  p\longmapsto\left(S(0,p,0),\partial_qS(0,p,0),\ldots,
  \frac{\partial_q^{n-1}S(0,p,0)}{(n-1)!}\right)
  \label{eq:exact-cap-rank-map}
\end{equation}
has rank \(n\), the applicable bound is attained on \(\mathbf u=0\).
The upper bound is uniform across the complete exact cap unfolding.
\end{theorem}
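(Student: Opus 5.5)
The plan is to apply the Moving-threshold cyclicity corollary (\cref{cor:moving-threshold-cyclicity}) with \(\lambda=(p,\mathbf u)\), threshold \(\tau(\lambda)=\tau(\mathbf u)=\min V_{\mathbf u}\), and signs \(\eta=\sgn c\), \(\sigma=\sgn d_0\). First, fix the state interval \((-\rho,\rho)\) and shrink the parameter neighbourhood. Since \(\tau(\mathbf u)\to0\) continuously as \(\mathbf u\to0\) (weighted homogeneity, \cref{lem:cap-weighted-homogeneity}), we may assume \(|\tau|<\rho/2\). On the inactive side \(q\leq\tau(\mathbf u)\) the cap vanishes identically, so \(F=S\). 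Because \(S\) is jointly \(C^{n+1}\) and \(\partial_q^nS(0,0,0)=n!c\neq0\), continuity gives \(\sgn\partial_q^nF=\sgn c\) uniformly on \(q\in[-\rho,\rho]\) for small parameters. Hypotheses (1)--(2) then follow, since \(C^1\) matching across \(\tau\) comes from \cref{lem:cap-strong-convexity}: \(\mathcal C\) and \(\partial_q\mathcal C=\mathcal L\) are continuous and vanish at \(\tau\).

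The main step is hypothesis (3): \(q\mapsto\sgn(d_0)\partial_qF\) must be strictly increasing on \([\tau(\mathbf u),\rho)\). Write
\[
  \partial_qF=S_q+d\,\mathcal L .
\]
For \(\tau<q_1<q_2\), the difference splits as
\[
  \bigl[S_q(q_2)-S_q(q_1)\bigr]
  + d\,\bigl[\mathcal L(q_2)-\mathcal L(q_1)\bigr].
\]
Here \(d\) is independent of \(q\); if \(d\) were allowed to depend on \(q\) one would treat \(d_q\mathcal C\) as a lower-order term. The smooth part is bounded by \(C(q_2-q_1)\). The cap part has sign \(\sgn d_0\) and, by \eqref{eq:cap-strong-convexity}, modulus at least \(C^{-1}|d_0|r^{1-\nu}(q_2-q_1)/2\). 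The gauge satisfies \(r\leq\max\{\rho^{1/\nu},|u_j|^{1/(\nu-j)}\}\), so \(r\) is small, and since \(\nu\geq4\) the factor \(r^{1-\nu}\) is large. Choosing \(\rho\) and the \(\mathbf u\)-neighbourhood so that \(C^{-1}|d_0|r^{1-\nu}/2>2C\) makes the cap term dominate, which gives strict monotonicity uniformly, including through the births and mergers of active components. The point I expect to need care is that \eqref{eq:cap-strong-convexity} is only stated in a neighbourhood of the origin. I would therefore pick \(\rho\) inside that neighbourhood first and only then shrink the parameters, checking that the constant \(C\) does not depend on \(\mathbf u\); the lemma asserts exactly this uniformity.

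With (1)--(3) established uniformly, \cref{cor:moving-threshold-cyclicity} gives at most \(n\) distinct zeros when \(cd_0>0\) and at most \(n+1\) when \(cd_0<0\), all isolated, uniformly in \((p,\mathbf u)\). This proves \eqref{eq:exact-transverse-cap-bound}. For attainment, restrict to \(\mathbf u=0\). By \cref{cor:central-cap-evaluation}, \(d(p,0)\mathcal C_{\nu,a}(q,0)=q_+^{1+1/\nu}\,d(p,0)\tfrac{2\nu}{\nu+1}a^{-1/\nu}\), so \(F(\cdot,p,0)\) is an order-\((n,\nu)\) ramified displacement in the sense of \cref{def:ramified-displacement}. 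Its \(K(s,q,p)\) is independent of \(s\), it is \(C^2\), and its central value has the sign of \(d_0\). The rank-\(n\) hypothesis on \eqref{eq:exact-cap-rank-map} is \eqref{eq:rank-n}, so \cref{thm:rank-lifting-root-realization} produces \(n\) simple negative roots, together with one additional positive root when \(cd_0<0\). This attains the bound along a path with \(\mathbf u=0\).
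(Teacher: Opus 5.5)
Your proposal is correct and follows essentially the same route as the paper. On the inactive side \(F=S\) carries a signed \(n\)th derivative, and on the active side the sublevel growth bound of \cref{lem:cap-strong-convexity} makes \(|d_0|r^{1-\nu}/(2C)\) dominate the Lipschitz constant of \(S_q\), after which \cref{cor:moving-threshold-cyclicity} gives the uniform bound and \cref{cor:central-cap-coefficient} with rank lifting gives attainment on \(\mathbf u=0\), exactly as in the paper.
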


\begin{proof}
On \(q\leq\tau(\mathbf u)\), the cap and its first derivative vanish, so
\(F=S\) and \(\sgn(c)\partial_q^nF>0\) after shrinking.  On the active
side, \(S_q\) has a uniform Lipschitz constant \(M\), while \(d\) has the
sign of \(d_0\) and modulus at least \(|d_0|/2\).  Thus for
\(q_2>q_1>\tau\), \cref{lem:cap-strong-convexity} gives
\[
  \sgn(d_0)(F_q(q_2)-F_q(q_1))
  \geq\left(\frac{|d_0|}{2C}r^{1-\nu}-M\right)(q_2-q_1)>0
\]
on a sufficiently small neighbourhood.  Since \(F\) is \(C^1\) across
the moving threshold, \cref{cor:moving-threshold-cyclicity} gives
\eqref{eq:exact-transverse-cap-bound} and isolation.  On \(\mathbf u=0\),
\cref{cor:central-cap-coefficient} reduces the cap to a nonzero multiple of
\(q_+^{1+1/\nu}\); \cref{thm:sharp-cyclicity} and the rank hypothesis give
attainment.
\end{proof}

\subsection{The limitation of value-only control}

The following explicit example identifies the boundary of the weighted
passage theorem.  It has exactly the next weighted value order, while the
derivative structure required by the moving-threshold corollary is absent.

\begin{proposition}[Unbounded cyclicity under value-only control]
\label[proposition]{prop:value-only-no-cyclicity}
Fix even \(\nu\geq4\), \(n\geq2\), and choose
\begin{equation}
  \frac1\nu<\gamma<\frac2\nu,
  \qquad
  R(q)=
  \begin{cases}
    q^{1+2/\nu}\sin(q^{-\gamma}),&q>0,\\
    0,&q\leq0.
  \end{cases}
  \label{eq:oscillatory-weighted-remainder}
\end{equation}
Then \(R\in C^1(\R)\), \(R(0)=R'(0)=0\), and on every compact
\(q\)-set
\begin{equation}
  |R(r^\nu q)|\leq C_Kr^{\nu+2}.
  \label{eq:oscillatory-weighted-bound}
\end{equation}
Nevertheless, for every nonzero \(c,d\), the family
\begin{equation}
  a_0+a_1q+cq^n
  +d\frac{2\nu}{\nu+1}a^{-1/\nu}q_+^{1+1/\nu}+R(q)
  \label{eq:value-only-counterexample-family}
\end{equation}
has unbounded local cyclicity at the origin.
\end{proposition}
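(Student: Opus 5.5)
First I will check the regularity and size claims directly. For \(q>0\),
\[
  R'(q)=\Bigl(1+\tfrac2\nu\Bigr)q^{2/\nu}\sin(q^{-\gamma})
  -\gamma q^{2/\nu-\gamma}\cos(q^{-\gamma}).
\]
Since \(\gamma<2/\nu\), both terms tend to \(0\) as \(q\downarrow0\). The difference quotient \(R(q)/q=q^{2/\nu}\sin(q^{-\gamma})\to0\), so \(R\in C^1(\R)\) with \(R(0)=R'(0)=0\). The weighted bound \eqref{eq:oscillatory-weighted-bound} follows from \(|R(r^\nu q)|\leq (r^\nu|q|)^{1+2/\nu}=r^{\nu+2}|q|^{1+2/\nu}\). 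Note that \(R''\) blows up like \(q^{2/\nu-2\gamma-1}\). This is far larger than the cap curvature \(\asymp q^{1/\nu-1}\), because \(2/\nu-2\gamma<1/\nu\) is equivalent to \(\gamma>1/(2\nu)\), which holds. So the active-side monotonicity hypothesis of \cref{cor:moving-threshold-cyclicity} fails. The example uses exactly this failure.

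To get unbounded cyclicity, I will work on the active side \(q>0\) and show that the rescaled function oscillates. Write the cap coefficient as \(D=d\frac{2\nu}{\nu+1}a^{-1/\nu}\). I will choose \(a_0=a_1=0\), or rather small values of them, since the cyclicity definition requires a supremum over every parameter neighbourhood. The function \(g(q)=cq^n+Dq^{1+1/\nu}+q^{1+2/\nu}\sin(q^{-\gamma})\) is dominated by \(Dq^{1+1/\nu}\), because \(n\geq2>1+1/\nu\). The oscillation is smaller by the factor \(q^{1/\nu}\), so it creates no zeros by itself. I will therefore use \(a_0,a_1\) to cancel the dominant part locally. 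Fix a small window \([Q,2Q]\) and let \(\ell_Q(q)=a_0+a_1q\) be the secant line through the values of \(\phi(q)=cq^n+Dq^{1+1/\nu}\) at the endpoints. The interpolation error \(\phi-\ell_Q\) is bounded on the window by \(\sup|\phi''|\,Q^2\asymp Q^{1+1/\nu}\). That is still larger than the oscillation amplitude \(Q^{1+2/\nu}\), so a single window does not suffice.

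The better step is to shrink the window to \([Q,Q+w]\) with \(w\) chosen so that it contains about \(N\) periods of \(\sin(q^{-\gamma})\). The local period is \(\asymp Q^{1+\gamma}\), so I take \(w\asymp NQ^{1+\gamma}\). On this window the secant error is \(O(Q^{1/\nu-1}w^2)=O(N^2Q^{1/\nu+1+2\gamma})\). The oscillation amplitude is \(\asymp Q^{1+2/\nu}\). The comparison \(1/\nu+2\gamma>2/\nu\) holds because \(\gamma>1/(2\nu)\), so the error is smaller than the amplitude for fixed \(N\) once \(Q\) is small. Then, with \(a_0,a_1\) equal to minus the secant coefficients, the function equals \(\phi-\ell_Q+R\) on the window. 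This is \(R\) plus an error of strictly smaller order, and \(R\) alternates sign at about \(2N\) points where \(\sin(q^{-\gamma})=\pm1\). The intermediate value theorem then gives at least about \(2N-1\) zeros in \((Q,Q+w)\).

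Next I will check that the parameters are small. We have \(a_1\approx\phi'(Q)=O(Q^{1/\nu})\) and \(a_0=O(Q^{1+1/\nu})\), both tending to \(0\) as \(Q\to0\). So every parameter neighbourhood and every state interval \((-\rho,\rho)\) contain such configurations, and the supremum in \eqref{eq:formal-local-cyclicity} is at least \(2N-1\) for every \(N\). This gives unbounded local cyclicity. Each zero produced this way is either isolated, or the count of isolated zeros is still large. To settle this, I will note that the function is real-analytic on \(q>0\), so its zeros there are isolated unless it vanishes identically, which it does not.

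I expect the main obstacle to be the scale bookkeeping. The order of quantifiers matters: \(N\) is fixed first, then \(Q\to0\). I must also confirm that the secant error estimate uses \(\sup|\phi''|\) on the window, not at \(Q\) alone, which is harmless because \(w\ll Q\) since \(\gamma>0\). Finally, the exponent inequality \(\gamma>1/(2\nu)\) is implied by the hypothesis \(\gamma>1/\nu\).
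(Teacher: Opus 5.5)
Your proof is correct and follows essentially the same route as the paper. You cancel the smooth part \(cq^n+Dq^{1+1/\nu}\) with an affine choice of \((a_0,a_1)\) near a small point, compare the resulting \(O(N^2Q^{1+1/\nu+2\gamma})\) error with the oscillation amplitude \(Q^{1+2/\nu}\) using \(2\gamma>1/\nu\), count sign alternations at the phase extrema, and use real analyticity on \(q>0\) for isolation. The differences are cosmetic: you take a secant line over a window of about \(N\) periods, whereas the paper takes the tangent line at \(q_k\), with \(q_k^{-\gamma}=2\pi M_k\), and uses \(N+1\) consecutive half-period extrema.
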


\begin{proof}
For \(q>0\),
\[
  R'(q)=\left(1+\frac2\nu\right)q^{2/\nu}\sin(q^{-\gamma})
  -\gamma q^{2/\nu-\gamma}\cos(q^{-\gamma}).
\]
The choice \(\gamma<2/\nu\) proves the \(C^1\) extension, and the weighted
bound follows directly from
\((r^\nu q)^{1+2/\nu}=r^{\nu+2}q^{1+2/\nu}\).

For \(q>0\), let
\[
  H(q)=cq^n+d\frac{2\nu}{\nu+1}a^{-1/\nu}q^{1+1/\nu}
\]
and choose \(q_k\downarrow0\) with \(q_k^{-\gamma}=2\pi M_k\),
\(M_k\in\mathbb N\).  Put
\begin{equation}
  a_{1,k}=-H'(q_k),
  \qquad a_{0,k}=q_kH'(q_k)-H(q_k).
  \label{eq:oscillatory-affine-path}
\end{equation}
Both parameters tend to zero.  Given an integer \(N\geq1\), define the first
\(N+1\) neighboring phase extrema, for \(0\leq j\leq N\), by
\[
  q_{k,j}=\left(2\pi M_k+(j+\tfrac12)\pi\right)^{-1/\gamma},
\]
At these points the oscillatory term alternates in sign and has modulus
comparable with
\(q_k^{1+2/\nu}\).  Taylor's theorem gives
\[
  |H(q_{k,j})-H(q_k)-H'(q_k)(q_{k,j}-q_k)|
  \leq CN^2q_k^{1+1/\nu+2\gamma}
  =o(q_k^{1+2/\nu}),
\]
because \(2\gamma>1/\nu\).  Choose \(k\) sufficiently large that all points
and both parameters lie in the prescribed
neighbourhoods.  The family values alternate, producing roots in \(N\)
disjoint intervals.  Real analyticity for \(q>0\) excludes a zero interval.
\end{proof}

Thus finite cyclicity requires derivative information beyond the value
estimate in \cref{thm:transverse-cap-blow-up}, even with its sharp
next-weight error.
This example belongs to the larger abstract value-controlled class.  In the
ramified coordinate \(s=q^{1/\nu}\), its coefficient relative to the
central active power is
\[
  \frac{R(s^\nu)}{s^{\nu+1}}
  =s\sin(s^{-\nu\gamma}).
\]
Because \(\nu\gamma>1\), its derivative has no continuous extension to
\(s=0\).  The smooth passage on the fixed stratum and the finite-event
structure away from it exclude this uncontrolled oscillation.

\subsection{Boundary-event curvature for prepared physical multiwells}

The four scalar and dynamical settings in this section differ by the
information retained from the passage:
\begin{center}
\small
\begin{tabular}{>{\raggedright\arraybackslash}p{0.20\textwidth}
                >{\raggedright\arraybackslash}p{0.34\textwidth}
                >{\raggedright\arraybackslash}p{0.33\textwidth}}
\toprule
Setting & Available structure & Cyclicity mechanism \\
\midrule
Fixed order-\(\nu\) stratum
  & Full ramified return with all lower contact jets zero
  & Cross-seam Hermite bound and separated-scale attainment \\
Complete exact cap
  & Positive-part area of the full polynomial contact jet
  & Coarea and quantitative sublevel growth through its discriminant \\
Prepared physical multiwell
  & Positive-part scalar flow over a finite-type contact graph
  & Same-sign entry--exit curvature, continued through births and mergers \\
Abstract value-controlled class
  & Weighted value asymptotic without event-derivative information
  & The counterexample in \cref{prop:value-only-no-cyclicity} shows that
    further derivative structure is necessary \\
\bottomrule
\end{tabular}
\end{center}

\paragraph{Prepared physical multiwell passage.}
Fix a common flow box
\([-T,T]\times(-I_*,I_*)\) and a parameter germ \(\theta=0\).  A
prepared physical multiwell passage of even order \(\nu\) consists of a
seam and a physical scalar flow of the form
\begin{equation}
  h_\theta(t,I)=g(t,I,\theta)(I-\Phi_\theta(t)),
  \qquad
  I'=(I-\Phi_\theta(t))_+B(t,I,\theta),
  \label{eq:prepared-physical-multiwell-definition}
\end{equation}
where \(g>0\), \(\Phi\) is jointly \(C^\nu\), \(B\) is jointly \(C^2\),
\(B(0,0,0)\neq0\), and \(\partial_t^\nu\Phi_\theta\) has a positive
uniform lower bound, with
\(\Phi_0(t)=at^\nu+o(|t|^\nu)\) for some \(a>0\).  The passage is a
prepared physical multiwell
\emph{return} when a smooth compatible transition outside the box closes
the same finite-time itinerary, all intervening sections are transverse,
and no orbit meets the incoming section before the designated return.

In the prepared box write
\[
  h_\theta(t,I)=g(t,I,\theta)(I-\Phi_\theta(t)),
  \qquad
  W_\theta=A_{\mathrm{geo}}\partial_t+B_{\mathrm{geo}}\partial_I,
  \qquad g>0.
\]
With
\begin{equation}
  B_{\mathrm{act}}(t,I,\theta)
  =\frac{gB_{\mathrm{geo}}}
  {1+g(I-\Phi_\theta(t))A_{\mathrm{geo}}},
  \label{eq:single-well-active-coefficient}
\end{equation}
the physical equation is exactly
\(I'=(I-\Phi_\theta(t))_+B_{\mathrm{act}}\).  Write \(B=B_{\mathrm{act}}\).

\begin{theorem}[Prepared Multiwell Boundary-Event Theorem]
\label{thm:prepared-multiwell-boundary-curvature}
\label{thm:boundary-event-curvature}
\label{prop:physical-single-well-curvature}
Let \(\nu\geq2\) be even, let \(\theta\) range near
\(0\in\R^m\), and suppose \(\Phi(t,\theta)\) is jointly \(C^\nu\) and
\(B(t,I,\theta)\) is jointly \(C^2\) on a common product
neighbourhood.  Write \(\Phi_\theta(t)=\Phi(t,\theta)\), and assume
\begin{equation}
  \Phi_0(t)=at^\nu+o(|t|^\nu),\quad a>0,
  \qquad
  \partial_t^\nu\Phi_\theta(t)\geq\kappa_*>0
  \quad (|t|\leq T).
  \label{eq:multiwell-finite-type}
\end{equation}
Consider
\begin{equation}
  I'=(I-\Phi_\theta(t))_+B(t,I,\theta),
  \qquad I(-T)=q,
  \qquad B(0,0,0)=\beta\neq0,
  \label{eq:multiwell-physical-ode}
\end{equation}
and put
\begin{equation}
  D(q,\theta)=I(T;q,\theta)-q,\qquad
  \tau_\theta=\min_{|t|\leq T}\Phi_\theta(t).
  \label{eq:multiwell-threshold-correction}
\end{equation}
After shrinking the common box and parameter germ, there is \(\rho_0>0\)
such that \(|\tau_\theta|<\rho_0\), every trajectory with
\(|q|+\|\theta\|<\rho_0\) is inactive at both \(t=-T\) and \(t=T\), and
the following statements hold.
\begin{enumerate}
\item For \(|q|<\rho_0\), the correction vanishes for
  \(q\leq\tau_\theta\).  On \(|q|<\rho_0\) it is \(C^1\) in \(q\), with \(D_q\) jointly
  continuous in \((q,\theta)\).  For each fixed \(\theta\), there is a set
  \(E_\theta\subset[\tau_\theta,\rho_0)\) of at most \(\nu-1\) levels that
  contains \(\tau_\theta\), such that \(D\) is \(C^2\) in \(q\) off
  \(E_\theta\).  In particular, \(D\) is smooth on the open inactive side
  and \(C^2\) at every regular active level.
\item At a regular active level, write
  \begin{equation}
    \mathcal A(q,\theta)
    =\{t:I(t;q,\theta)>\Phi_\theta(t)\}
    =\bigcup_{k=1}^{N}(a_k,b_k),
    \label{eq:multiwell-active-union}
  \end{equation}
  where \(a_k\) is an entry and \(b_k\) is the following exit.  Then
  \begin{equation}
    1\leq N\leq\frac{\nu}{2}.
    \label{eq:multiwell-component-bound}
  \end{equation}
  With
  \(\mathcal F=(I-\Phi_\theta)B\) and \(J=\partial_qI>0\),
  \begin{align}
    D_{qq}=J(T)\Bigg[&
    \sum_{k=1}^{N}\left\{
      \frac{B(b_k,I(b_k),\theta)J(b_k)}{\Phi_\theta'(b_k)}
      -\frac{B(a_k,I(a_k),\theta)J(a_k)}{\Phi_\theta'(a_k)}
    \right\}
    \notag\\[-2mm]
    &+\int_{\mathcal A(q,\theta)}
      \mathcal F_{II}(t,I(t),\theta)J(t)\dd t\Bigg].
    \label{eq:physical-passage-curvature-formula}
  \end{align}
\item There are \(\rho_1\in(0,\rho_0]\) and constants
  \(b_0,C_2>0\) such that \(\sgn B=\sgn\beta\), \(|B|\geq b_0\), and
  \(|\mathcal F_{II}|\leq C_2\) on the resulting common box.  For
  \(0<\rho\leq\rho_1\), put
  \begin{align}
    L(\rho)
    &=\sup\{\operatorname{meas}\mathcal A(q,\theta):
      |q|+\|\theta\|\leq\rho\},
      \label{eq:multiwell-active-length-modulus}\\
    \omega(\rho)
    &=\sup\{|\Phi_\theta'(c)|:c\text{ is a regular event},
      \ |q|+\|\theta\|\leq\rho\},
      \label{eq:multiwell-event-slope-modulus}
  \end{align}
  with the supremum of an empty event set defined to be zero.  Both
  moduli tend to zero with \(\rho\).  At every regular active point with
  \(|q|+\|\theta\|\leq\rho\), the event set is nonempty and
  \(\omega(\rho)>0\), and the quantitative curvature estimate
  \begin{equation}
    \sgn(\beta)D_{qq}(q,\theta)
    \geq\frac12\left(
      \frac{b_0}{\omega(\rho)}-2C_2L(\rho)\right)
    \label{eq:physical-passage-quantitative-curvature}
  \end{equation}
  holds.  Moreover, \(D,D_q\to0\) uniformly as
  \((q,\theta)\to(0,0)\).  Consequently, for every
  \(\varepsilon,M>0\), there is \(\rho\in(0,\rho_1]\) such that
  \(|D|+|D_q|<\varepsilon\) whenever
  \(|q|+\|\theta\|<\rho\), and at every regular active point there,
  \begin{equation}
    \sgn(\beta)D_{qq}>M.
    \label{eq:physical-passage-curvature-sign}
  \end{equation}
  Across the exceptional levels this becomes the global increment bound
  \begin{equation}
    \sgn(\beta)\bigl(D_q(q_2,\theta)-D_q(q_1,\theta)\bigr)
    \geq M(q_2-q_1)
    \label{eq:multiwell-derivative-increment}
  \end{equation}
  whenever \(\tau_\theta<q_1<q_2\) and
  \(|q_i|+\|\theta\|<\rho\) for \(i=1,2\).
\end{enumerate}
Thus, on a sufficiently small common neighbourhood,
\(\sgn(\beta)D_q(\cdot,\theta)\) is strictly increasing throughout the
active-level interval \(q>\tau_\theta\), including across levels at which
active components are born or merge.
\end{theorem}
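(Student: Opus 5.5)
The argument treats the passage as a scalar Carathéodory ODE in \(t\) with a Lipschitz right-hand side. Variational equations supply \(D_q\), and differentiating across the moving events supplies \(D_{qq}\).

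\emph{Step 1: localization and inactivity.} From \eqref{eq:multiwell-finite-type} and Rolle, \(\Phi_\theta-c\) has at most \(\nu\) zeros on \([-T,T]\) for every level \(c\). Since \(\Phi_0(\pm T)\geq aT^\nu/2>0\), shrinking the parameter germ and choosing \(\rho_0\) small makes every trajectory starting with \(|q|<\rho_0\) stay below \(\Phi_\theta\) near \(t=\pm T\). There it is inactive, and by Gronwall \(I\) stays \(O(\rho_0)\)-close to \(q\). For \(q\leq\tau_\theta\) the right side vanishes identically along \(I\equiv q\), so \(D=0\).

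\emph{Step 2: the first variation.} The right side \(f=(I-\Phi)_+B\) is Lipschitz in \(I\), and \(\partial_If=\mathbf 1_{\{I>\Phi\}}(B+(I-\Phi)B_I)\) is continuous except on the measure-zero set where \(I=\Phi\). Consequently \(I\) is \(C^1\) in \(q\), and
\[
J(t)=\exp\int_{-T}^t\mathbf 1_{\mathcal A}\,\mathcal F_I\dd s>0.
\]
Joint continuity of \(D_q=J(T)-1\) follows from dominated convergence, because the active set varies continuously in measure.

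\emph{Step 3: event structure and the second variation.} The events are the zeros of \(t\mapsto I(t)-\Phi_\theta(t)\). On inactive stretches \(I\) is constant, and each regular event is a transversal crossing of the graph. Because the crossing points of a constant level with \(\Phi_\theta\) number at most \(\nu\), I expect at most \(\nu/2\) active components; that gives \eqref{eq:multiwell-component-bound}. I will also check that the nonregular levels, meaning tangential events, number at most \(\nu-1\), using the count of critical points of \(\Phi_\theta\).

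At a regular level the events \(a_k,b_k\) depend on \(q\) with \(\partial_qa_k=-J(a_k)/(I'(a_k)-\Phi'(a_k))=J(a_k)/\Phi'(a_k)\), since \(I'=0\) at an event. Differentiate the formula for \(\log J(T)\) from Step 2 in \(q\) with variable limits. The integrand \(\mathcal F_I\) equals \(B\) at the endpoints, so the boundary terms give the bracket in \eqref{eq:physical-passage-curvature-formula}, and the interior term gives \(\int\mathcal F_{II}J\). Multiplying by \(J(T)\) gives the formula. The orientation is the expected one: \(\Phi'(a_k)<0<\Phi'(b_k)\), so every boundary term equals \(|B|J/|\Phi'|\) with sign \(\sgn\beta\). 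This is the physical analogue of \eqref{eq:cap-coarea-curvature}.

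\emph{Step 4: the quantitative estimate.} Continuity gives \(\sgn B=\sgn\beta\), \(|B|\geq b_0\), and \(J\in[1/\sqrt2,\sqrt2]\) after shrinking. At least one entry–exit pair exists, so \(\sgn(\beta)D_{qq}\geq\tfrac12(b_0/\omega-2C_2L)\) after adjusting constants.

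The moduli tend to zero as follows. Near the central parameter, active sets and event locations lie in \(|t|\lesssim\rho^{1/\nu}\), because \(\Phi_\theta(t)-\tau_\theta\) is bounded below by a coercive degree-\(\nu\) comparison. This rests on \(\partial_t^\nu\Phi\geq\kappa_*\) together with the smallness of the lower derivatives at \(\theta\to0\). Hence \(L\to0\), and \(\Phi'\) at any event is \(O(\rho^{(\nu-1)/\nu})\to0\). From \(|D|\leq\int_{\mathcal A}|I-\Phi||B|\) and \(|D_q|\leq J(T)-1\), both \(D\) and \(D_q\) tend to zero.

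\emph{Step 5: crossing exceptional levels.} For \(\tau_\theta<q_1<q_2\), \(D_q\) is continuous and the exceptional levels are finite. Integrating \(D_{qq}>M\) on the intervening regular intervals gives \eqref{eq:multiwell-derivative-increment}, with no need for \(D_{qq}\) at births or mergers.

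The main obstacle is Step 4's uniform event-slope bound \(\omega(\rho)\to0\) over the entire unfolding, including events deep in internal wells. I would get it by weighted rescaling \(t=rs\) as in \cref{lem:cap-strong-convexity}. In that scaling all level roots are confined to a compact set, and the prepared remainder contributes only lower-order slope. The Step 3 bound of \(\nu-1\) exceptional levels also needs care when the level is constant only piecewise.
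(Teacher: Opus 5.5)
Your overall route is the paper's: the exponential formula for \(J\), differentiation of \(\log J(T)\) with moving limits, the sign pattern \(\Phi_\theta'(a_k)<0<\Phi_\theta'(b_k)\), and joining regular chambers through continuity of \(D_q\). There is, however, a gap in Step~3's counting.

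The bound \(N\le\nu/2\) does not follow from crossings of a single constant level with \(\Phi_\theta\). The level \(I\) moves on every active component, so successive entries and exits occur at different heights, and a one-level Rolle count says nothing about them. The missing idea is to count events per monotonicity interval of \(\Phi_\theta\):
\begin{enumerate}
\item Since \(I'=0\) wherever \(h=I-\Phi_\theta\) vanishes, \(h'=-\Phi_\theta'\) at every event.
\item Hence on an open interval where \(\Phi_\theta'<0\) every event is an upward crossing (an entry), and on one where \(\Phi_\theta'>0\) every event is an exit.
\item Two crossings in the same direction require an opposite one in between, so each such interval carries at most one event.
\item There are at most \(\nu-1\) critical times, so a regular level has at most \(\nu\) events. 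They alternate from a first entry to a final exit because both box ends are inactive, which gives \(2N\le\nu\).
\end{enumerate}

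Your count of exceptional levels likewise needs more than the number of critical points. A tangential event must occur at a critical time \(t_c\). You then need strict monotonicity of \(q\mapsto I(t_c;q,\theta)\), which follows from \(J>0\) or from forward/backward uniqueness of the scalar flow. Only then does each \(t_c\) carry at most one level, giving at most \(\nu-1\) exceptional levels.

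Step~2 silently uses the same event analysis. The null set that matters is the set of times the trajectory spends on the seam. It is null (indeed finite) only because the finite-type bound excludes seam intervals: \(h\equiv0\) on an interval would force \(\Phi_\theta'\equiv0\) there.

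In Step~4, your weighted localization \(|t|\lesssim\rho^{1/\nu}\) is valid. Joint \(C^\nu\) regularity makes the lower Taylor coefficients of \(\Phi_\theta\) at \(t=0\) of size \(O(\lVert\theta\rVert)\), and the argument even yields rates. But the step you call the main obstacle is handled more simply in the paper. Trajectories converge uniformly to \(0\), and \(\Phi_0>0\) on \([-T,T]\setminus\{0\}\), so every active time, and hence every event, converges to \(t=0\). Then \(\Phi_\theta'(c)\to\Phi_0'(0)=0\) by continuity, with no separate treatment of internal wells.
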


\paragraph{Why multiple wells reinforce the endpoint mechanism.}
At every regular entry, \(\Phi_\theta'(a_k)<0\); at every regular exit,
\(\Phi_\theta'(b_k)>0\).  After shrinking, \(B\) has the fixed sign of
\(\beta\).  Consequently every summand in
\eqref{eq:physical-passage-curvature-formula} has that sign, independently
of the number or ordering of the active intervals.  The sum therefore
grows as all event slopes collapse, while the integral is bounded by the
total active duration.  At a birth or merger the transverse formula itself
is unavailable.  The difference-quotient argument in
Appendix~\ref{app:boundary-events} proves continuity of \(D_q\) there and
then joins the signed chamberwise estimates.  This derivative structure is
what the value-only family in \cref{prop:value-only-no-cyclicity} lacks.

The prepared pure tilt
\begin{equation}
  \Phi_{p,\eta}(t)
  =a(p,\eta)t^\nu+b\eta t+t^{\nu+1}R(t,p,\eta),
  \qquad a(0,0)>0,\quad b\neq0,
  \label{eq:prepared-pure-linear-tilt}
\end{equation}
is a single-well special case of the multiwell theorem.  Indeed,
\[
  \partial_t^2\Phi_{p,\eta}(t)
  =
  t^{\nu-2}
  \left[\nu(\nu-1)a(p,\eta)+O(t)\right]
\]
uniformly for small \((p,\eta)\).  After shrinking the fixed box, the
bracket is positive; hence \(\partial_t\Phi_{p,\eta}\) is strictly
increasing and has exactly one zero.  Thus the theorem applies to the
generic centered linear contact-jet direction, with neutral return
parameters allowed in the smooth coefficients.

\begin{corollary}[Smooth regular transmission preserves multiwell curvature]
\label[corollary]{cor:multiwell-curvature-transmission}
\label[corollary]{cor:single-well-curvature-transmission}
Let \(\mathcal R_\theta\) be a smooth family of regular outgoing maps with
\(L_0=\mathcal R_0'(0)\neq0\), and put
\begin{equation}
  G(q,\theta)=\mathcal R_\theta(q+D(q,\theta))-
  \mathcal R_\theta(q).
  \label{eq:single-well-transmitted-correction}
\end{equation}
Then \(G\) is \(C^1\) in \(q\), and \(G=G_q=0\) on the inactive side
and at the threshold.  For every \(M>0\), after shrinking,
\begin{equation}
  \sgn(L_0\beta)
  \bigl(G_q(q_2,\theta)-G_q(q_1,\theta)\bigr)
  \geq M(q_2-q_1)
  \label{eq:transmitted-curvature-sign}
\end{equation}
whenever \(\tau_\theta<q_1<q_2\).  At every regular active level,
\(G_{qq}\) has sign \(L_0\beta\), and its modulus diverges toward the
central germ.
\end{corollary}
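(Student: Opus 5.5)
The plan is to treat the statement as a direct application of \cref{prop:threshold-curvature-transmission} with \(H=D\), the threshold \(\tau=\tau_\theta\), and the exceptional set \(E_\theta\) supplied by \cref{thm:prepared-multiwell-boundary-curvature}.

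\emph{Step 1: the hypotheses of \cref{prop:threshold-curvature-transmission}.}
\begin{itemize}
\item Part~(1) of the theorem shows that \(D\) is \(C^1\) in \(q\), with \(D_q\) jointly continuous, and that \(D\) vanishes for \(q\leq\tau_\theta\).
\item Since \(D\equiv0\) there, \(D_q=0\) on the inactive side and at the threshold. One-sided continuity of \(D_q\) at \(\tau_\theta\) gives the same value from the active side.
\item \(D\) is \(C^2\) off the finite set \(E_\theta\) of at most \(\nu-1\) levels.
\item Part~(3) gives \(D,D_q\to0\) uniformly, and \(\sgn(\beta)D_{qq}>M\) at regular active points for any prescribed \(M\). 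This is exactly ``\(\sgn H_{qq}=\sgn\beta_0\), \(|H_{qq}|\to\infty\)'' with \(\beta_0=\beta\).
\end{itemize}

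\emph{Step 2: the regular-level formula and the sign of \(G_{qq}\).}
At a regular active level, differentiate \(G\) twice:
\[
G_{qq}=\mathcal R_\theta''(q+D)(1+D_q)^2-\mathcal R_\theta''(q)+\mathcal R_\theta'(q+D)D_{qq}.
\]
The first two terms are bounded, since \(\mathcal R\) is \(C^2\) on a compact germ. The factor \(\mathcal R_\theta'(q+D)\) tends to \(L_0\). Hence \(\sgn(L_0\beta)G_{qq}\geq \tfrac{|L_0|}{2}\sgn(\beta)D_{qq}-C\). Choosing \(M\) large in Step 1 makes this exceed any prescribed bound, so \(G_{qq}\) has the sign of \(L_0\beta\) and its modulus diverges. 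The \(C^1\) regularity of \(G\), and \(G=G_q=0\) for \(q\leq\tau_\theta\), follow from \(G_q=\mathcal R_\theta'(q+D)(1+D_q)-\mathcal R_\theta'(q)\) together with \(D=D_q=0\) there.

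\emph{Step 3: the increment bound \eqref{eq:transmitted-curvature-sign} across exceptional levels.}
I expect this to be the main delicate point, because \(G_{qq}\) is undefined at births and mergers. My first approach avoids second derivatives altogether by writing, for \(\tau_\theta<q_1<q_2\),
\[
G_q(q_2)-G_q(q_1)=\mathcal R'(q_2+D_2)\bigl(D_q(q_2)-D_q(q_1)\bigr)+\bigl[\text{terms } \mathcal R'(q_i+D_i)(1+D_q(q_1))-\mathcal R'(q_i)\text{ differences}\bigr].
\]
\begin{itemize}
\item The bracketed terms are bounded by \(C(q_2-q_1)\), because \(\mathcal R'\) is Lipschitz and \(D\) is Lipschitz with small constant.
\item The main term is at least \(\tfrac{|L_0|}{2}M'(q_2-q_1)\) in signed form, using the global increment bound \eqref{eq:multiwell-derivative-increment} with a large \(M'\).
\end{itemize}
Taking \(M'\) with \(\tfrac{|L_0|}{2}M'-C\geq M\) yields \eqref{eq:transmitted-curvature-sign} directly. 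No continuity argument through \(E_\theta\) is needed, since the increment estimate from the theorem already spans the exceptional levels.

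Alternatively, one could invoke \cref{prop:threshold-curvature-transmission} verbatim for monotonicity and then use this explicit decomposition only to obtain the quantitative constant \(M\).
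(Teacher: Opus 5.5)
Your proposal is correct. Steps~1 and~2 coincide with the paper's proof. The paper specializes \cref{prop:threshold-curvature-transmission} to \((H,p,\beta_0)=(D,\theta,\beta)\), computes \(G_{qq}\) on regular chambers by the same formula you use, and reads off the inactive-side vanishing of \(G\) and \(G_q\) from \(D=D_q=0\).

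The difference is in Step~3. The paper first obtains a large lower bound on \(\sgn(L_0\beta)G_{qq}\) on each regular interval. It integrates this over subintervals trimmed away from the finite exceptional set, then joins the pieces by continuity of \(G_q\). You bypass \(G_{qq}\) entirely. Instead you push the already-joined increment bound \eqref{eq:multiwell-derivative-increment} for \(D_q\) through the first-order identity
\[
\begin{aligned}
G_q(q_2)-G_q(q_1)
&=\mathcal R_\theta'(q_2+D_2)\bigl(D_q(q_2)-D_q(q_1)\bigr)\\
&\quad+\bigl(1+D_q(q_1)\bigr)\bigl[\mathcal R_\theta'(q_2+D_2)-\mathcal R_\theta'(q_1+D_1)\bigr]\\
&\quad-\bigl[\mathcal R_\theta'(q_2)-\mathcal R_\theta'(q_1)\bigr],
\end{aligned}
\]
whose first term has the sign of \(L_0\) times that of \(\beta\) after shrinking.

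Your route needs only a Lipschitz bound on \(\mathcal R_\theta'\) and a small Lipschitz constant for \(D\). It reuses the joining across births and mergers that the boundary-event theorem has already carried out at the level of \(D_q\). The paper's route keeps the transmission step abstract: it asks of \(H\) only \(C^1\) continuity and chamberwise curvature sign and divergence. It would therefore also apply to corrections for which no global increment bound had been established in advance.

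The alternative you mention at the end, invoking the proposition verbatim, is exactly the paper's argument.
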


\begin{proof}
Apply \cref{prop:threshold-curvature-transmission}.  Its algebraic proof
appears with the event estimates in Appendix~\ref{app:boundary-events}.
\end{proof}

\begin{theorem}[Sharp physical cyclicity on the prepared multiwell unfolding]
\label{thm:physical-multiwell-cyclicity}
\label{thm:physical-tilt-cyclicity}
Let an actual return displacement over the complete prepared contact graph
\eqref{eq:prepared-cap-graph} have the exact decomposition
\begin{equation}
  \Delta(q,p,\mathbf u)=S(q,p,\mathbf u)+G(q,p,\mathbf u),
  \label{eq:physical-tilt-displacement}
\end{equation}
where \(S\) is the smooth reference \(X^-\)-displacement.  Let
\(D(q,\theta)\) be the passage correction from
\cref{thm:prepared-multiwell-boundary-curvature}, let
\(\mathcal R_\theta\) be its regular outgoing transition, and set
\begin{equation}
  G(q,\theta)=\mathcal R_\theta(q+D(q,\theta))-\mathcal R_\theta(q),
  \qquad L_0=\mathcal R_0'(0)\neq0.
  \label{eq:physical-multiwell-transmitted-correction}
\end{equation}
Here \(\theta=(p,\mathbf u)\), the family satisfies
\cref{thm:prepared-multiwell-boundary-curvature} on a common box, and in
\eqref{eq:prepared-cap-graph} the coefficient \(a\) and the smooth
remainder may also depend smoothly on the neutral-return parameter \(p\).
Assume
\begin{equation}
  \begin{aligned}
    \partial_q^jS(0,0,0)&=0 &&(0\leq j<n),\\
    c&=\frac{\partial_q^nS(0,0,0)}{n!}\neq0,
    &d&=L_0\beta\neq0.
  \end{aligned}
  \label{eq:physical-tilt-central-data}
\end{equation}
Then there are \(r_0>0\) and a neighbourhood \(V\) of \((p,\mathbf u)=(0,0)\)
such that, for every \((p,\mathbf u)\in V\),
\begin{equation}
  \#\{q:|q|<r_0,\ \Delta(q,p,\mathbf u)=0\}
  \leq
  \begin{cases}
    n,&cd>0,\\
    n+1,&cd<0.
  \end{cases}
  \label{eq:physical-tilt-cyclicity-bound}
\end{equation}
All zeros in this interval are isolated.  If the central smooth coefficient
map
\begin{equation}
  p\longmapsto\left(S(0,p,0),\partial_qS(0,p,0),\ldots,
  \frac{\partial_q^{n-1}S(0,p,0)}{(n-1)!}\right)
  \label{eq:physical-multiwell-rank-map}
\end{equation}
has rank \(n\), and the central slice \(\mathbf u=0\) is a fixed-contact
actual-return family satisfying \(\mathrm{(FR)}_\nu\), the applicable bound
is sharp on that slice; its simple periodic orbits have alternating stability.
\end{theorem}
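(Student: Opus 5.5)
The plan is to verify, on one common state--parameter neighbourhood, the three hypotheses of \cref{cor:moving-threshold-cyclicity} for \(F(q,\theta)=\Delta(q,p,\mathbf u)\), using the moving threshold \(\tau(\theta)=\tau_\theta\) of \cref{thm:prepared-multiwell-boundary-curvature}, and then to treat attainment separately on the central slice.

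First, the regularity across the threshold. By \cref{thm:prepared-multiwell-boundary-curvature}(1), \(D\) vanishes with its \(q\)-derivative for \(q\leq\tau_\theta\), is \(C^1\) in \(q\), and has \(D_q\) jointly continuous. By \cref{cor:multiwell-curvature-transmission}, the same holds for \(G\): it is \(C^1\) in \(q\) with \(G=G_q=0\) on the inactive side and at the threshold. Since \(S\) is jointly \(C^{n+1}\), \(\Delta\) is \(C^1\) across \(q=\tau_\theta\) and coincides with \(S\) for \(q\leq\tau_\theta\). Because \(\partial_q^nS(0,0,0)=n!\,c\neq0\), joint continuity gives \(\sgn(c)\,\partial_q^n\Delta>0\) on \(q\leq\tau_\theta\) after shrinking \(r_0\) and \(V\). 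Here one needs \(|\tau_\theta|<\rho_0\) with \(\tau_\theta\to0\), which follows from \(\Phi_0(t)=at^\nu+o(|t|^\nu)\) and continuity of the minimum. This settles hypotheses (1) and (2) of the corollary with \(\eta=\sgn c\).

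Second, the active-side monotonicity, which I expect to be the main point. Let \(M_S\) be a uniform Lipschitz bound for \(S_q\) on the box. Apply \eqref{eq:transmitted-curvature-sign} with \(M=2M_S\) to obtain, for \(\tau_\theta<q_1<q_2\),
\[
  \sgn(d)\bigl(\Delta_q(q_2)-\Delta_q(q_1)\bigr)\geq(2M_S-M_S)(q_2-q_1)>0 .
\]
By continuity of \(\Delta_q\), the same strict increase extends to the closed interval starting at \(\tau_\theta\). The delicate issue is that \eqref{eq:transmitted-curvature-sign} must hold across births and mergers, that is, across the exceptional set \(E_\theta\) of at most \(\nu-1\) levels. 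This is exactly what the global increment bound \eqref{eq:multiwell-derivative-increment} provides, and \cref{prop:threshold-curvature-transmission} carries it through \(\mathcal R_\theta\). So I only need to check that the shrinking for \(M=2M_S\) can be made uniform in \(\theta\in V\), which the quantitative estimate \eqref{eq:physical-passage-curvature-formula}–\eqref{eq:physical-passage-quantitative-curvature} gives, since \(\omega(\rho),L(\rho)\to0\). This yields hypothesis (3) with \(\sigma=\sgn d\). \Cref{cor:moving-threshold-cyclicity} then bounds the number of distinct zeros by \(n\) when \(cd>0\) and by \(n+1\) when \(cd<0\), and shows that every zero is isolated, uniformly on \(V\).

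Third, attainment. On \(\mathbf u=0\) the prepared graph has order-\(\nu\) contact with no lower jets. Under \(\mathrm{(FR)}_\nu\), \cref{thm:dynamics-to-scalar-reduction} puts the displacement into the class of \cref{def:ramified-displacement}, with \(d=L_0\beta\) by \eqref{eq:composition-leading-coefficient} (cf.\ \cref{cor:central-cap-coefficient}). The rank-\(n\) hypothesis on \eqref{eq:physical-multiwell-rank-map} is then \eqref{eq:rank-n}. Consequently \cref{thm:sharp-cyclicity} and \cref{thm:rank-lifting-root-realization} produce \(n\) or \(n+1\) simple roots along paths in the slice, and \cref{cor:stability} gives alternating stability. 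The remaining bookkeeping is to check that the central-slice coefficient \(d\) agrees in sign with \(L_0\beta\), so that the same dichotomy governs the bound and its attainment.
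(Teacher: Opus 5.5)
Your proposal is correct and follows the paper's proof essentially step for step. It uses \(\Delta=S\) on the inactive side for the signed \(n\)th derivative, and \cref{cor:multiwell-curvature-transmission} with a modulus exceeding the bound on \(S_{qq}\) to make \(\sgn(d)\Delta_q\) strictly increasing across births and mergers. It then applies \cref{cor:moving-threshold-cyclicity} for the uniform count and isolation, and settles the central slice via \(\mathrm{(FR)}_\nu\), \cref{thm:sharp-cyclicity}, and \cref{cor:stability}.

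One small correction: on the central slice, \eqref{eq:composition-leading-coefficient} gives the ramified coefficient as \(L_0\beta\) times the positive factor \(\frac{2\nu}{\nu+1}(\nu!/\kappa_\nu)^{1/\nu}\), up to positive section-coordinate factors, rather than \(L_0\beta\) itself. That is exactly why only the sign agreement you flag is needed.
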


\begin{proof}
Let \(\tau(p,\mathbf u)\) be the moving minimum.  On the inactive side \(G=0\),
so \(\sgn(c)\partial_q^n\Delta>0\) after shrinking.  On the active side,
\(S_{qq}\) is uniformly bounded.  Choose the modulus in
\cref{cor:multiwell-curvature-transmission} larger than that bound.  It
follows directly from the derivative increment that
\(\sgn(d)\Delta_q\) is strictly increasing across every regular chamber
and event-discriminant level.  The displacement is \(C^1\) across the
moving threshold, so
\cref{cor:moving-threshold-cyclicity} proves the bound and isolation.
The central rank statement follows from
\cref{cor:fixed-order-circuit-recognition,thm:sharp-cyclicity,cor:stability}.
\end{proof}

The exact-cap and physical theorems use different inputs.  Coarea controls
the exact polynomial cap directly.  The physical theorem instead uses the
prepared finite-type condition, scalar-flow order, fixed sign of the two-field
mismatch, and \(C^1\) continuation across the finite event discriminant.
The oscillatory value-only counterexample lies outside this physical class
and identifies the boundary-event argument as the additional input required
beyond the weighted value estimate.

\section{Closed planar realization}
\label{sec:closed-realization}
\subsection{A closed planar realization for every even
  \texorpdfstring{\(\nu\geq4\)}{nu >= 4}}
\label{sec:closed-polynomial-realization}

The preceding scalar and contact-incidence theorems distinguish an abstract
root bound from an actual return realization.  We now establish
attainability and sharpness in an explicit closed planar family.  Every
return below arises directly
as a finite-time Poincar\'e map of a continuous piecewise-polynomial vector
field.

Fix \(n\geq2\), an integer \(\ell\geq2\), and put
\begin{equation}
  \nu=2\ell,
  \qquad
  \varrho=x^2+y^2-1,
  \qquad
  h_\ell(x,y)=\varrho-(1-x)^\ell.
  \label{eq:closed-polynomial-seam}
\end{equation}
Let
\[
  R(x,y)=(-y,x),
  \qquad
  V(x,y)=(x,y),
\]
and, for
\[
  \lambda=(\lambda_0,\ldots,\lambda_{n-1})\in\R^n,
\]
define
\begin{equation}
  G_\lambda(x,\varrho)
  =
  \gamma \varrho^n
  +\sum_{j=1}^{n-1}\lambda_j\varrho^j
  +\lambda_0(1-x)^\ell,
  \qquad \gamma\neq0.
  \label{eq:closed-polynomial-radial-factor}
\end{equation}
For a fixed \(\beta\neq0\), consider the two polynomial extensions
\begin{equation}
  X^-_\lambda
  =R+\frac12G_\lambda V,
  \qquad
  X^+_\lambda
  =X^-_\lambda+\frac{\beta}{2}h_\ell V,
  \label{eq:closed-polynomial-branches}
\end{equation}
and the physical continuous piecewise-polynomial field
\begin{equation}
  X_\lambda
  =
  \begin{cases}
    X^-_\lambda,&h_\ell\leq0,\\
    X^+_\lambda,&h_\ell\geq0.
  \end{cases}
  \label{eq:closed-polynomial-physical-field}
\end{equation}
The two extensions agree on \(h_\ell=0\).  Equivalently,
\(X_\lambda=X^-_\lambda+(\beta/2)(h_\ell)_+V\), so the physical field is
locally Lipschitz and has unique local solutions.

The seam geometry directly displays the first nonquadratic case.  For
\(\ell=2\), the seam is the parabola
\(x=1-y^2/2\).  On the unit circle,
\[
  h_2(x,y)=-(1-x)^2,
\]
so the circle stays in \(h_2<0\) except at \(z=(1,0)\).  The local gap in
\cref{fig:closed-quartic-geometry} makes the order-four contact explicit.

\begin{theorem}[Closed polynomial realization of even-order osculation]
\label{thm:closed-polynomial-realization}
For every \(n\geq2\) and every even \(\nu=2\ell\geq4\), the family
\eqref{eq:closed-polynomial-physical-field} has the following properties.

\begin{enumerate}
\item At \(\lambda=0\), the unit circle is a periodic orbit of period
  \(2\pi\).  It has one grazing point
  \(z=(1,0)\), of contact order \(\nu\), with
  \begin{equation}
    \kappa_\nu=\frac{\nu!}{2^\ell},
    \qquad
    \beta_{\mathrm{gr}}=\beta.
    \label{eq:closed-polynomial-contact-data}
  \end{equation}
  The same point remains an order-\(\nu\) tangency of the reference
  \(X^-\)-family for all sufficiently small \(\lambda\).

\item A fixed incoming ray before \(z\) carries a smooth
  parameter-dependent coordinate \(q\), with \(q=0\) on the reference
  \(X^-_\lambda\)-orbit through \(z\), hence on the central periodic orbit
  when \(\lambda=0\).  For every sufficiently small \(\lambda\), the physical
  first-return map is a finite-time Poincar\'e map, and its
  displacement has the form
  \begin{equation}
    \Delta(q,\lambda)
    =S(q,\lambda)
    +q_+^{1+1/\nu}
      K(q_+^{1/\nu},q,\lambda).
    \label{eq:closed-polynomial-displacement}
  \end{equation}
  On the central fibre,
  \begin{equation}
    S(q,0)=2\pi\gamma q^n+O(q^{n+1}),
    \qquad
    K(0,0,0)=
    \frac{2\nu}{\nu+1}\sqrt2\,\beta.
    \label{eq:closed-polynomial-leading-data}
  \end{equation}

\item If
  \[
    a_k(\lambda)=\frac1{k!}\partial_q^kS(0,\lambda),
    \qquad 0\leq k<n,
  \]
  then the coefficient map
  \(\lambda\mapsto(a_0,\ldots,a_{n-1})\) has rank \(n\) at the
  origin.  More precisely, its derivative is triangular, with diagonal
  \begin{equation}
    C_\ell,2\pi,\ldots,2\pi,
    \qquad
    C_\ell=
    \int_0^{2\pi}(1-\cos\theta)^\ell\dd\theta
    =\frac{2\pi(2\ell)!}{2^\ell(\ell!)^2}>0.
    \label{eq:closed-polynomial-rank-diagonal}
  \end{equation}

\item The selected local periodic-orbit cyclicity in this physical
  family is therefore
  \begin{equation}
    \begin{cases}
      n,&\gamma\beta>0,\\
      n+1,&\gamma\beta<0.
    \end{cases}
    \label{eq:closed-polynomial-cyclicity}
  \end{equation}
  In each sign case, the applicable value is attained by actual simple
  periodic orbits for parameter sequences \(\lambda\to0\).  These cycles
  are hyperbolic after the germ is shrunk, and their stability alternates
  along the return section.
\end{enumerate}
\end{theorem}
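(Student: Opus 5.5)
The plan is to verify the hypotheses of $\mathrm{(FR)}_\nu$ for the explicit family and then invoke \cref{thm:dynamics-to-scalar-reduction}, \cref{thm:sharp-cyclicity} and \cref{cor:stability}. The computations use polar-type coordinates. Write $x=r\cos\theta$, $y=r\sin\theta$, so that $V=r\partial_r$ and $R=\partial_\theta$. Then $X^-_\lambda$ reads $\dot\theta=1$, $\dot\varrho=(\varrho+1)G_\lambda$, using $\dot{(r^2)}=r^2G_\lambda$, and $G_\lambda$ depends on $(\theta,\varrho)$ through $1-x=1-\sqrt{\varrho+1}\cos\theta$.

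\textbf{Item 1.} At $\lambda=0$, $G_0=\gamma\varrho^n$ vanishes on $\varrho=0$, so the unit circle is invariant with $\dot\theta=1$ and period $2\pi$. On the circle $h_\ell=-(1-\cos\theta)^\ell$, which vanishes only at $\theta=0$, to order $2\ell=\nu$ in $\theta$. Because $\dot\theta=1$ there, $(X^-)^\nu h$ equals $\partial_\theta^\nu$ of $-(\theta^2/2)^\ell$, giving $\kappa_\nu=\nu!/2^\ell$. For $\beta_{\rm gr}=dh[W]$ with $W=(\beta/2)V$ at $(1,0)$, we have $dh=d\varrho=2x\,dx+\cdots$, so $dh[V]=2$ and $\beta_{\rm gr}=\beta$.

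For small $\lambda$, the terms $\lambda_j\varrho^j$ and $\lambda_0(1-x)^\ell$ vanish at $z$ to orders that leave $L^k_{X^-}h(z)=0$ for $k<\nu$. I will verify this by checking that $X^-$ and $R$ agree at $z$ to sufficient order along the reference orbit. Concretely, $\varrho$ along the orbit through $z$ is $O(\theta^{\ell+1})$, and this propagates.

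\textbf{Item 2.} I use the radial coordinate $q=\varrho$ on the ray $\theta=-\theta_0$, recentred at the reference orbit through $z$, and the flow from $\theta=\theta_0$ around to $-\theta_0$ as $\mathcal R_\lambda$. Since $\dot\theta=1$ for both branches, because the mismatch is radial, all sections $\theta=\mathrm{const}$ are transverse. The return time is finite, and the return is the first one. The local passage fits \cref{prop:even-order-grazing-comparison} after normalizing $I$ so that $dI=dh$ at $z$. This change of section coordinate has positive derivative, so the section coordinate law \eqref{eq:section-coordinate-coefficient-law} tracks $c$ and $d$.

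At $\lambda=0$, $\dot\varrho=\gamma\varrho^n(1+\varrho)$, so over time $2\pi$ the displacement is $2\pi\gamma q^n+O(q^{n+1})$, with $L_0=1$. Then \eqref{eq:composition-leading-coefficient} gives $K(0,0,0)=\frac{2\nu}{\nu+1}(2^\ell)^{1/\nu}\beta=\frac{2\nu}{\nu+1}\sqrt2\,\beta$.

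\textbf{Item 3.} This is a first-variation computation of $S(q,\lambda)$ in $\lambda$ at $\lambda=0$ along $\varrho=q$. At $q$ near $0$, the derivative with respect to $\lambda_j$ of the time-$2\pi$ displacement is $\int_0^{2\pi}q^j\,d\theta\,(1+O(q))$, up to the variational factor. This contributes $2\pi$ to $a_j$ and nothing to $a_k$ for $k<j$. The derivative with respect to $\lambda_0$ contributes $\int_0^{2\pi}(1-\cos\theta)^\ell d\theta=C_\ell$ to $a_0$. The result is a triangular derivative with the stated diagonal. The Wallis-type value of $C_\ell$ is standard.

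\textbf{Main obstacle.} The hardest step is the parameter dependence: the reference orbit through $z$ and the contact data must vary smoothly, and the normalization of the passage must be uniform in $\lambda$. To control this, I will use the exact decomposition $\Delta=S+\mathcal T_{\mathcal R}[\mathcal D_{\rm gr}]$ directly from \cref{prop:reference-return-composition}. With Item 3 established, item 4 follows from \cref{thm:dynamics-to-scalar-reduction}, using $\sgn(cd)=\sgn(\gamma\beta)$.
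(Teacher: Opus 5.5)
Your route is the paper's: polar coordinates with $\dot\theta=1$ on both branches, the localized passage plus regular transmission with $L_0=1$, first variations of the full-turn map for the triangular coefficient matrix, and then \cref{thm:sharp-cyclicity} with \cref{cor:stability}. However, the step showing that $z$ remains an order-$\nu$ tangency for small $\lambda$ fails as written. This step matters beyond item 1, because \cref{prop:even-order-grazing-comparison} needs order-$\nu$ contact at every $\lambda$ in the germ to produce the ramified form \eqref{eq:closed-polynomial-displacement}.

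Since $h_\ell=\varrho-(1-x)^\ell$ contains $\varrho$ \emph{linearly}, your estimate $\varrho=O(\theta^{\ell+1})$ along the $X^-_\lambda$-orbit through $z$ only gives $h_\ell=O(\theta^{\ell+1})$ there. That yields $L^k_{X^-_\lambda}h_\ell(z)=0$ for $k\leq\ell$, and says nothing about $\ell<k<\nu$ or about $\kappa_\nu$, since $\ell+1<2\ell$. The state-space vanishing orders of the perturbing terms do not help either, because $\lambda_1\varrho$ vanishes only to first order at $z$.

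What is needed, and what the paper proves, is $\varrho_\lambda(\theta)=O(\theta^{2\ell+1})$ uniformly for small $\lambda$. Every term of $G_\lambda$ other than $\lambda_0(1-x)^\ell$ is a multiple of $\varrho^j$ with $j\geq1$. The remaining forcing at $\varrho=0$ is $\lambda_0(1-\cos\theta)^\ell=O(\theta^{2\ell})$, because $1-x$ is $O(\theta^2)$ along the orbit, not $O(\theta)$. Together with $\varrho_\lambda(0)=0$, variation of constants gives the estimate. Then along the orbit $1-x=\theta^2/2+O(\theta^4)+O(\theta^{2\ell+1})$ and $h_\ell=-2^{-\ell}\theta^{2\ell}+O(\theta^{2\ell+1})$. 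This gives both the vanishing of the lower Lie derivatives and the unchanged $\kappa_\nu$.

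Three smaller points should also be made explicit.

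\emph{Normalization of $K(0,0,0)$.} The stated value refers to the radial coordinate $q$. By \eqref{eq:section-coordinate-coefficient-law}, passing from the first-integral coordinate multiplies $d$ by $a^{1/\nu}$, so you need derivative exactly one, not merely a positive derivative. This follows because $dh_\ell(z)=d\varrho(z)$ gives $dI_0(z)[\partial_\varrho]=1$. The radial multiplier from the ray to $z$ along the central circle is also one, so $\partial_q\rho_0(0)=1$.

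\emph{Seam events outside the box.} For $\mathcal R_\lambda$ to be a smooth $X^-_\lambda$-transition, you must confine all seam events to the grazing box. The function $h_\ell$ has a strictly negative maximum on the central circle outside the box. A common full-turn domain (Gronwall on a fixed annulus) and continuous dependence then keep all nearby physical and reference arcs in $h_\ell<0$ there.

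\emph{Recentring in the $\lambda_0$-column.} Write $\Phi_0$ for the central full-turn map and $\sigma(\lambda)$ for the ray coordinate of the orbit through $z$. The recentring contributes $(\Phi_0'(q)-1)\partial_{\lambda_0}\sigma(0)$. This vanishes at $q=0$ because $\Phi_0'(0)=1$, so the diagonal entry $C_\ell$ is unaffected.
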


\begin{proof}[Proof outline]
The identity \(\dot\theta=1\) gives the time-\(2\pi\) return, the seam
gap \(-(1-\cos t)^\ell\) gives the unique order-\(2\ell\) contact, and the
\(\lambda\)-variations give a triangular jet matrix with diagonal
\(C_\ell,2\pi,\ldots,2\pi\).  The common-domain and variational details are
in \cref{app:closed-realization}; the passage and sharp scalar theorems then
give the stated cycles.
\end{proof}

\begin{corollary}[Quartic grazing benchmarks]
\label[corollary]{cor:quartic-polynomial-benchmarks}
For \(\ell=2\),
\begin{equation}
  h_2=y^2+2x-2,
  \qquad
  \kappa_4=6,
  \qquad
  c=2\pi\gamma,
  \qquad
  d=\frac{8\sqrt2}{5}\beta,
  \label{eq:quartic-polynomial-data}
\end{equation}
and the rank determinant is
\begin{equation}
  3\pi(2\pi)^{n-1}.
  \label{eq:quartic-polynomial-rank}
\end{equation}
For \(n=2\), \eqref{eq:closed-polynomial-branches} is a degree-five
piecewise-polynomial family realizing two or three local cycles according
as \(\gamma\beta>0\) or \(\gamma\beta<0\); for \(n=3\), its degree is
seven and it realizes three or four.  This explicit algebraic benchmark
realizes both the grazing geometry and the sharp scalar configurations in a
closed planar vector field with finite-time Poincar\'e maps.  Its unfolding
remains within the fixed-order-\(\nu\)
stratum; general transverse perturbations may split the high-order
tangency into several lower-order events.
\end{corollary}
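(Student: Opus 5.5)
This corollary is a specialization of \cref{thm:closed-polynomial-realization} to \(\ell=2\). The plan is to substitute \(\nu=4\) into its formulas and then read off the polynomial degrees from \eqref{eq:closed-polynomial-branches}.

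\textbf{Contact data.} First expand the seam:
\[
  h_2=x^2+y^2-1-(1-x)^2=y^2+2x-2.
\]
From \eqref{eq:closed-polynomial-contact-data} with \(\nu=4\) and \(\ell=2\),
\[
  \kappa_4=\frac{4!}{2^2}=6.
\]
From \eqref{eq:closed-polynomial-leading-data}, the neutral coefficient is \(c=2\pi\gamma\). The transmitted coefficient is
\[
  d=\frac{2\cdot4}{5}\sqrt2\,\beta=\frac{8\sqrt2}{5}\beta.
\]
As a consistency check against \eqref{eq:composition-leading-coefficient}, we have \((4!/\kappa_4)^{1/4}=4^{1/4}=\sqrt2\), which matches the \(\sqrt2\) factor provided \(L_0=1\) in the time-\(2\pi\) normalization.

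\textbf{Rank determinant.} The derivative of the coefficient map is triangular by \eqref{eq:closed-polynomial-rank-diagonal}, so its determinant is the product of the diagonal entries, \(C_2(2\pi)^{n-1}\). We then compute
\[
  C_2=\int_0^{2\pi}(1-\cos\theta)^2\dd\theta=2\pi+\pi=3\pi.
\]
This agrees with the closed form \(2\pi\cdot4!/(4\cdot4)=3\pi\). The determinant is therefore \(3\pi(2\pi)^{n-1}\).

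\textbf{Degrees.} Here \(V\) is linear, \(h_2\) is quadratic, and \(G_\lambda\) has degree \(2n\) in \((x,y)\), coming from \(\gamma\varrho^n\). Its other terms have degree at most \(\max(2(n-1),2)\). Hence \(G_\lambda V\) has degree \(2n+1\), while \(h_2V\) has degree \(3\) and \(R\) has degree \(1\). The field therefore has degree \(2n+1\), which is \(5\) for \(n=2\) and \(7\) for \(n=3\). The leading coefficient \(\gamma\ne0\) keeps the degree exact.

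\textbf{Cycle counts.} Since \(\sgn(cd)=\sgn(\gamma\beta)\), part~(4) of the theorem gives \(n\) or \(n+1\) cycles. For \(n=2\) this is two or three, and for \(n=3\) it is three or four.

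\textbf{Scope remark.} The final remark about the fixed stratum needs only the following observation. For every \(\lambda\), \(h_2\) restricted to the \(X^-_\lambda\)-orbit through \(z\) keeps order-four contact, by part~(1) of the theorem. Transverse splitting of the tangency is described by \cref{sec:transverse-cap} and is not claimed here.

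I do not expect a real obstacle. The only care needed is the normalization of \(d\), namely that \(L_0=1\) for the rotation-dominated return. This is already encoded in \eqref{eq:closed-polynomial-leading-data}, so the corollary can simply cite it.
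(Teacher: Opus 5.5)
Your proposal is correct and matches the paper's route: the corollary is a direct specialization of \cref{thm:closed-polynomial-realization} to \(\ell=2\) (\(\nu=4\)), and the paper gives it no separate proof. Your computations of \(h_2\), \(\kappa_4=6\), \(d=8\sqrt2\beta/5\), \(C_2=3\pi\), the field degree \(2n+1\), and \(\sgn(cd)=\sgn(\gamma\beta)\) are exactly the checks that specialization requires.
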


\begin{theorem}[Contact-jet-transverse extension of the closed family]
\label{thm:closed-contact-jet-transverse-extension}
Let \(n\geq2\), \(\ell\geq2\), and \(\nu=2\ell\).  For
\(\eta=(\eta_1,\ldots,\eta_{\nu-2})\), replace
\eqref{eq:closed-polynomial-seam} by
\begin{equation}
  h_{\ell,\eta}(x,y)
  =\varrho-(1-x)^\ell
   +\sum_{j=1}^{\nu-2}\eta_jy^j,
  \label{eq:closed-transverse-seam}
\end{equation}
keep \(X^-_\lambda\) from
\eqref{eq:closed-polynomial-branches}, and set
\begin{equation}
  X^+_{\lambda,\eta}
  =X^-_\lambda+\frac{\beta}{2}h_{\ell,\eta}V,
  \qquad
  X_{\lambda,\eta}
  =X^-_\lambda+\frac{\beta}{2}
    (h_{\ell,\eta})_+V.
  \label{eq:closed-transverse-field}
\end{equation}
Then the following statements hold near
\((z,\lambda,\eta)=((1,0),0,0)\).

\begin{enumerate}
\item The physical field is continuous and locally Lipschitz, and a fixed
  ray carries time-\(2\pi\) Poincar\'e maps on a common state and
  parameter domain.

\item Define the full contact map
  \begin{equation}
    \mathcal J_\nu^{\mathrm{cl}}(z',\lambda,\eta)
    =\bigl(
      h_{\ell,\eta},
      X^-_\lambda h_{\ell,\eta},
      \ldots,
      (X^-_\lambda)^{\nu-1}h_{\ell,\eta}
    \bigr)(z').
    \label{eq:closed-contact-map}
  \end{equation}
  Its derivative with respect to the two state variables and the
  \(\nu-2\) parameters \(\eta\) is invertible at the base point.  In
  particular, the hypotheses of
  Definition~\ref{def:ambient-transverse-contact} and
  Theorem~\ref{thm:ambient-contact-incidence} hold.
  Hence
  the order-\(\nu\) contact stratum in the physical parameter
  space is locally \(\eta=0\) and has codimension \(\nu-2\).  Thus
  \(\eta\) supplies a minimal transverse contact-jet slice for this
  selected incidence germ after the contact point is allowed to move; the
  two omitted jet directions are
  precisely displacement normal to the seam and recentering along the
  reference flow.

\item Let \(a_0(\lambda),\ldots,a_{n-1}(\lambda)\) be the smooth-reference
  coefficients from \cref{thm:closed-polynomial-realization}.  The combined
  map
  \begin{equation}
    (\lambda,\eta)\longmapsto
    \bigl(a_0(\lambda),\ldots,a_{n-1}(\lambda),
          \eta_1,\ldots,\eta_{\nu-2}\bigr)
    \label{eq:closed-total-codimension-map}
  \end{equation}
  has rank \(n+\nu-2\).  Consequently, the simultaneous order-\(\nu\)
  grazing and order-\(n\) neutral-return locus has physical codimension
  \(n+\nu-2\) in this explicit family, in agreement with
  \cref{thm:total-physical-codimension}.

\item Throughout the same small \((\lambda,\eta)\)-domain, the number of
  periodic orbits whose section coordinates remain near zero is at most
  \begin{equation}
    \begin{cases}
      n,&\gamma\beta>0,\\
      n+1,&\gamma\beta<0.
    \end{cases}
    \label{eq:closed-transverse-physical-bound}
  \end{equation}
  Every such fixed point is isolated.  The applicable bound is attained
  on \(\eta=0\), where the simple periodic orbits alternate in stability.
\end{enumerate}

On the central stratum \(\eta=0\), all conclusions of
\cref{thm:closed-polynomial-realization} remain unchanged and provide the
sharp witnesses.  The upper bound in
\eqref{eq:closed-transverse-physical-bound} applies across the complete
prepared contact-jet slice, including parameters with several active
crossing intervals.
\end{theorem}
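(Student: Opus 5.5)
The plan treats the four items in turn, reusing the central-stratum computations of \cref{thm:closed-polynomial-realization} wherever possible.

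\emph{Item 1.} Write \(X_{\lambda,\eta}=X^-_\lambda+\frac{\beta}{2}(h_{\ell,\eta})_+V\). Since \(x\mapsto x_+\) is Lipschitz and the remaining data are polynomial, the field is locally Lipschitz and solutions are unique. In polar coordinates, \(X^-_\lambda\) and \(V\) contribute nothing to \(\dot\theta\), so \(\dot\theta=1\) holds for the physical field as well. Hence the return to a fixed ray is the time-\(2\pi\) map. Continuous dependence on a compact annulus near \(\varrho=0\) then gives a common state and parameter domain.

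\emph{Item 2.} Compute \(\mathcal J^{\mathrm{cl}}_\nu\) at \(z=(1,0)\) and \(\lambda=\eta=0\). There \(X^-_0=R=\partial_\theta\) on and near the circle, up to the radial factor \(\gamma\varrho^n V/2\). Because \(\varrho\) and all its \(R\)-derivatives vanish to order \(n\geq2\) on the circle, the terms in \((X^-_0)^kh\) coming from \(G\) vanish at \(z\) for \(k<\nu\).

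\emph{Derivative in \(\eta\).} Along \(\theta\) we have \(y=\sin\theta\), so \(\partial_{\eta_j}(X^-_0)^k h|_z=\partial_\theta^k(\sin^j\theta)|_{\theta=0}\). This equals \(0\) for \(k<j\) and \(j!\) for \(k=j\). The \(\eta\)-columns therefore form a triangular block in rows \(1,\dots,\nu-2\), with nonzero diagonal \(j!\).

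\emph{Derivative in the state.} As in the outline of \cref{thm:ambient-contact-incidence}, a radial displacement changes row \(0\) by \(dh(z)[\partial_r]=2\neq0\). A displacement along the flow shifts \(\theta\) and changes only row \(\nu-1\), since \(\partial_\theta^{\nu}(-(1-\cos\theta)^\ell)|_0=-\kappa_\nu\neq0\).

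Ordering the columns as (radial, \(\eta_1,\dots,\eta_{\nu-2}\), flow) gives a block-triangular \(\nu\times\nu\) matrix with nonzero diagonal, so it is invertible. This verifies \cref{def:ambient-transverse-contact}. The incidence set is then a graph over \(\lambda\) determining \((z,\eta)\). Since \(\eta=0\) already carries an order-\(\nu\) contact at \(z=(1,0)\) for every small \(\lambda\) (item 1 of \cref{thm:closed-polynomial-realization}), uniqueness in the implicit function theorem identifies the stratum with \(\eta=0\), which has codimension \(\nu-2\).

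\emph{Item 3.} The coefficients \(a_k\) depend only on \(\lambda\), because \(X^-_\lambda\) does not involve \(\eta\). The Jacobian of \eqref{eq:closed-total-codimension-map} is therefore block diagonal, with blocks \(D_\lambda A\) of rank \(n\) (from \eqref{eq:closed-polynomial-rank-diagonal}) and the identity of rank \(\nu-2\). \Cref{thm:total-physical-codimension} then gives codimension \(n+\nu-2\).

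\emph{Item 4.} This is the main obstacle. I would apply \cref{thm:physical-multiwell-cyclicity}, which requires putting the passage near \(z\) into the prepared multiwell form \eqref{eq:prepared-physical-multiwell-definition}.

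\emph{Step 1: flow box and seam graph.} Use the \(X^-_\lambda\)-flow box \((t,I)\), with \(t=\theta\) and \(I\) the rescaled first integral. The seam becomes \(I=\Phi_{\lambda,\eta}(t)\), where \(\Phi_{0,0}(t)=2^{-\ell}t^{\nu}+O(t^{\nu+2})\).

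\emph{Step 2: finite type.} By continuity, \(\partial_t^\nu\Phi\geq\kappa_*>0\) on a small box, uniformly in \((\lambda,\eta)\).

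\emph{Step 3: preparation.} Apply \cref{lem:transverse-cap-preparation}. The centred coefficients \(u_j\) are smooth functions of \((\lambda,\eta)\) with \(\partial u/\partial\eta\) invertible, by the triangular structure found in item 2. This makes \(\eta\) a valid transverse contact-jet slice. The active coefficient \(B_{\mathrm{act}}\) of \eqref{eq:single-well-active-coefficient} is then \(C^2\), and at the base point it equals \(\beta\) times a positive factor, so it is nonzero.

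\emph{Step 4: transition and cyclicity.} The regular transition is the remaining part of the time-\(2\pi\) flow. It satisfies \(\mathcal R'_0(0)>0\), which gives \(\sgn d=\sgn\beta\). By item 2 of \cref{thm:closed-polynomial-realization}, \(\sgn c=\sgn\gamma\). Then \cref{cor:multiwell-curvature-transmission} and \cref{thm:physical-multiwell-cyclicity} give the bound \eqref{eq:closed-transverse-physical-bound} uniformly in \((\lambda,\eta)\), together with isolation of the zeros.

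\emph{Step 5: attainment.} On \(\eta=0\), attainment and alternating stability follow from item 4 of \cref{thm:closed-polynomial-realization}.

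The delicate point is uniformity: the flow box, the constant \(\kappa_*\), and the return domain must all be chosen once for the full \((\lambda,\eta)\)-neighbourhood. I would handle this by compactness of the box together with joint smoothness of the polynomial data.
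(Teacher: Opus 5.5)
Your proposal follows the paper's proof essentially step for step: \(\dot\theta=1\) with a Gronwall full-turn domain, the same lower-triangular contact matrix with columns (normal, \(\eta_1,\dots,\eta_{\nu-2}\), flow) and pivots \(dh_\ell(z)[N]\neq0\), \(j!\), \(-\kappa_\nu\), implicit-function uniqueness to identify the stratum with \(\eta=0\), the block-diagonal rank for the combined map, and the preparation lemma feeding \cref{thm:physical-multiwell-cyclicity}, with attainment inherited from \cref{thm:closed-polynomial-realization}. The paper makes two routine points more explicit than you do: every seam event stays in the selected box uniformly in \(\eta\) (because \(h_{\ell,0}\leq-\delta_0\) along the central circle outside the box and \(\sum_j\eta_jy^j\) is uniformly small), which is what makes the rest of the turn a genuine regular \(X^-_\lambda\)-transition; and the prepared incoming coordinate is an increasing conjugate of the radial coordinate on the fixed ray, so the sign data and fixed-point count transfer via \eqref{eq:section-coordinate-coefficient-law}.
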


\begin{proof}[Proof outline]
The state normal, flow tangent, and \(\eta\)-directions form a triangular
contact-jet matrix with diagonal
\[
  1,1!,\ldots,(\nu-2)!,-\frac{\nu!}{2^\ell}.
\]
The return-coefficient block is the preceding invertible \(\lambda\)-block,
so the combined derivative is block diagonal.  Appendix
\ref{app:closed-realization} proves the common return domain and rank;
local preparation and \cref{thm:physical-multiwell-cyclicity} give the
fourth conclusion.
\end{proof}

\section{Discussion and scope}
\label{sec:conclusion}

The results separate two roles: the neutral order \(n\) controls the sharp
cross-seam orbit count, while the contact order \(\nu\) controls physical
codimension, passage scale, and transverse weights.  Rank-\(n\) unfoldings
attain the \(n\) or \(n+1\) bound, and contact incidence gives total
codimension \(n+\nu-2\).

Two reusable tools connect the flow analysis to the zero count.
The Cross-Seam theorem bounds roots of two branches that share a finite seam
jet and allows a full smooth ramified remainder; it applies whenever a
dynamical or geometric reduction produces this scalar structure.  The
Prepared Multiwell Boundary-Event Theorem treats a positive-part scalar flow
over a finite-type contact graph.  Its entry and exit terms have a common
sign, and its first variation continues through births and mergers.  This
gives a general criterion for carrying derivative-based root bounds across
changing event topology.

For the two-field flow, localized passage and regular transmission embed
these scalar results in an actual first-return map.  The weighted cap
connects exponent \(1+1/\nu\) to the generic quadratic-contact exponent
\(3/2\), and the multiwell theorem preserves the sharp total through
repeated entries and exits.  For every even \(\nu\geq4\), the closed
polynomial family realizes all hypotheses and sharp configurations in one
planar system.

\subsection*{Scope of the multiwell conclusion}

The multiwell theorem assumes a uniform finite-type condition, a nonvanishing
active coefficient of fixed sign, and a passage that is inactive at both
boundaries of a common flow box.  These hypotheses make every endpoint
reinforce the same signed curvature.  They also identify extensions worth
studying: sign-changing
transmission between wells and contact graphs without uniform finite type
require a different organizing mechanism.  The oscillatory \(C^1\) family in
\cref{prop:value-only-no-cyclicity} identifies event structure, beyond
weighted value control, as essential for finite cyclicity.

\section*{Declarations}

\noindent\textbf{Acknowledgements and generative-AI disclosure.}
OpenAI Codex (GPT-5, accessed August 2026; immutable snapshot identifier
unavailable) assisted with language and structural editing, cross-reference
and symbolic consistency checks, literature searches, and figure-code and
layout editing.  The author verified the mathematical statements, citations,
figures, and resulting text and accepts full responsibility for the article.
\par\smallskip
\noindent\textbf{Funding.}
This research received no specific grant.
\par\smallskip
\noindent\textbf{Competing interests.}
The author declares no competing interests.
\par\smallskip
\noindent\textbf{Data and code availability.}
No empirical data were used.  The source files, symbolic checks, and
figure-generation scripts are available in the
\href{https://github.com/h-lu/-RAMIFIED-SEAM-SINGULARITIES}{project
repository}.  All analytical arguments are contained in the article.

\clearpage
\appendix
\section{Uniform Hermite and separated-scale estimates}
\label{app:uniform-hermite}

This appendix proves the finite-regularity estimates behind
\cref{thm:cross-seam-hermite-zero}, the uniform
ramified shape estimate, and the two-scale persistence argument used in
\cref{thm:rank-lifting-root-realization}.

\subsection{One-sided signs and the confluent node}

\begin{lemma}[Positive-side seam signs]
\label[lemma]{lem:positive-seam-signs}
Let \(g\in C^1([0,r])\) and suppose that \(g'\) is strictly increasing.
Then:
\begin{enumerate}
\item \(g\) has at most two distinct zeros in \([0,r]\);
\item if \(0<y_1<y_2\leq r\) are zeros, then
  \(g(0)>0\) and \(g'(0)<0\);
\item if \(0\) and \(y>0\) are zeros, then \(g'(0)<0\);
\item if \(g(0)>0\) and \(g'(0)>0\), then \(g\) has no positive zero.
\end{enumerate}
\end{lemma}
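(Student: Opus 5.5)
The whole lemma follows from two facts: \(g\) is strictly convex in the sense that \(g'\) is strictly increasing, so \(g\) has at most one critical point in \([0,r]\); and Rolle's theorem relates zeros of \(g\) to zeros of \(g'\). We treat the four items in turn, using only these two facts and the intermediate value theorem.

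For item (1), suppose \(g\) had three distinct zeros \(y_1<y_2<y_3\) in \([0,r]\). Rolle's theorem gives \(\xi_1\in(y_1,y_2)\) and \(\xi_2\in(y_2,y_3)\) with \(g'(\xi_1)=g'(\xi_2)=0\) and \(\xi_1<\xi_2\). This contradicts strict monotonicity of \(g'\).

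For item (2), let \(0<y_1<y_2\) be zeros and let \(\xi\in(y_1,y_2)\) be the Rolle point, so \(g'(\xi)=0\). Strict monotonicity gives \(g'<0\) on \([0,\xi)\). Hence \(g'(0)<0\), and \(g\) is strictly decreasing on \([0,y_1]\), which gives \(g(0)>g(y_1)=0\).

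For item (3), the same argument applies. Rolle gives \(\xi\in(0,y)\) with \(g'(\xi)=0\), so \(g'(0)<g'(\xi)=0\).

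For item (4), since \(g'(0)>0\) and \(g'\) is increasing, \(g'>0\) on \([0,r]\). Then \(g\) is strictly increasing, so \(g(y)>g(0)>0\) for every \(y>0\), and \(g\) has no positive zero.

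There is no real obstacle here. The only point needing care is item (2): the strict inequality \(g(0)>0\) needs strict decrease of \(g\) on \([0,y_1]\), and this comes from \(g'<0\) on that interval, which holds because \(y_1<\xi\).
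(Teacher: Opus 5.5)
Your proposal is correct and follows essentially the same argument as the paper: Rolle for (1), and Rolle/mean value theorem combined with strict increase of \(g'\) for (2)--(4). The only cosmetic difference is in (2). The paper applies the mean value theorem on \([0,y_1]\) to write \(g'(\xi)=-g(0)/y_1\) and compares this with the Rolle point in \((y_1,y_2)\). You instead deduce \(g(0)>0\) from strict decrease of \(g\) on \([0,y_1]\), which is an equivalent step.
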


\begin{proof}
Three distinct zeros would give two distinct zeros of \(g'\), contradicting
strict monotonicity.  If \(0<y_1<y_2\) are zeros, the mean value theorem
gives \(\xi\in(0,y_1)\) and \(\zeta\in(y_1,y_2)\) with
\[
  g'(\xi)=-\frac{g(0)}{y_1},
  \qquad
  g'(\zeta)=0.
\]
Since \(\xi<\zeta\), strict increase gives \(g'(\xi)<0\), hence
\(g(0)>0\), and then \(g'(0)<g'(\xi)<0\).  The same argument on
\([0,y]\) proves the third assertion.  For the fourth assertion,
\(g'(q)>g'(0)>0\) for every \(q>0\), so a positive seam value remains
positive.
\end{proof}

\begin{lemma}[\(n\) inactive roots fix the seam signs]
\label[lemma]{lem:n-negative-signs}
Assume \eqref{eq:left-shape}.  If \(f\) has \(n\) distinct roots
\(x_1<\cdots<x_n<0\), then
\begin{equation}
  \sgn f(0)=\eta,
  \qquad
  \sgn f'(0)=\eta.
  \label{eq:n-negative-seam-signs}
\end{equation}
\end{lemma}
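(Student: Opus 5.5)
The plan is to use the Newton form of the polynomial interpolating \(f\) at the nodes \(x_1,\ldots,x_n,0\), combined with the Hermite–Genocchi/Rolle representation of the top divided difference.

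\textbf{Sign of \(f(0)\).} Since \(f\) vanishes at \(x_1,\ldots,x_n\), the \(n+1\)-point divided difference is
\[
  f[x_1,\ldots,x_n,0]=\frac{f(0)}{\prod_{i=1}^n(0-x_i)}=\frac{f(0)}{\prod_i(-x_i)}.
\]
By the finite-regularity mean value theorem for divided differences (valid because \(f\in C^n([-\ell,0])\)), this equals \(f^{(n)}(\xi)/n!\) for some \(\xi\in(x_1,0)\). The denominator is positive, so \eqref{eq:left-shape} gives \(\sgn f(0)=\eta\) directly.

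\textbf{Sign of \(f'(0)\).} The cleanest route avoids confluent nodes and uses Rolle repeatedly. The function \(f\) has \(n\) distinct zeros in \([x_1,x_n]\). Hence \(f^{(k)}\) has at least \(n-k\) distinct zeros in \((x_1,x_n)\) for \(k\le n-1\). In particular, \(f^{(n-1)}\) has a zero \(\zeta<x_n<0\).

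Because \(\eta f^{(n)}>0\) on \([-\ell,0]\), \(\eta f^{(n-1)}\) is strictly increasing there. It is therefore positive on \((\zeta,0]\), and \(f^{(n-1)}\) has exactly one zero in \([-\ell,0]\).

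\textbf{Downward induction on \(k\).} The claim is that \(\eta f^{(k)}(0)>0\) and that all zeros of \(f^{(k)}\) in \([-\ell,0]\) lie below \(x_n\). Suppose this holds for \(f^{(k+1)}\). Then \(\eta f^{(k+1)}>0\) on an interval \((z_{k+1},0]\), where \(z_{k+1}\) is the largest zero of \(f^{(k+1)}\).

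By Rolle, \(f^{(k)}\) has a zero below the largest zero of \(f^{(k-1)}\), and so on down to \(f\), whose largest zero is \(x_n\). More precisely, the largest zero of \(f^{(k)}\) lies strictly between two consecutive zeros of \(f^{(k-1)}\), so it is \(<x_n\). Conversely, the largest zero of \(f^{(k+1)}\) exceeds the largest zero of \(f^{(k)}\). That is, \(z_{k+1}>z_k\): a zero of \(f^{(k+1)}\) lies after \(z_k\), since \(f^{(k)}\) must have zero count exactly \(n-k\). The exactness follows from the fact that \(f^{(n)}\) has no zeros, so counts cannot exceed \(n-k\).

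\textbf{Sign propagation.} On \((z_k,0]\), \(f^{(k)}\) therefore has no zero. On \((z_{k+1},0]\) it is \(\eta\)-increasing. Its value at \(z_k<z_{k+1}\) is \(0\), and on \([z_k,z_{k+1}]\) there is no further zero. So \(\eta f^{(k)}\) is positive at \(z_{k+1}\), hence at \(0\), because it is increasing after \(z_{k+1}\).

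\textbf{Main obstacle.} The delicate step is exactly this ordering bookkeeping. It relies on Rolle counts being sharp, which is forced because \(f^{(n)}\) has no zeros and at most \(n-k\) zeros are possible for \(f^{(k)}\). It also relies on the interlacing \(z_k<z_{k+1}\) of the largest zeros.

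Applying the induction with \(k=1\) gives \(\sgn f'(0)=\eta\). Applying it with \(k=0\) reproves \(\sgn f(0)=\eta\) and serves as a consistency check against the divided-difference computation above.
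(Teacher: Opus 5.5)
Your argument for \(\sgn f(0)\) is the paper's argument, and your base case \(k=n-1\) is fine. The induction for \(f'(0)\), however, rests on a false step. The interlacing \(z_k<z_{k+1}\) that you flag as essential is backwards, and it contradicts your own preceding sentence.

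Because \(\eta f^{(n)}>0\) on \([-\ell,0]\), \(f^{(k+1)}\) has at most \(n-k-1\) zeros there. So if \(w_1<\cdots<w_{n-k}\) are the zeros of \(f^{(k)}\), the Rolle points in the gaps \((w_i,w_{i+1})\) are \emph{all} the zeros of \(f^{(k+1)}\). Hence \(z_{k+1}\in(w_{n-k-1},w_{n-k})\), that is, \(z_{k+1}<z_k\). The exact count therefore forbids, rather than forces, a zero of \(f^{(k+1)}\) after \(z_k\).

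The sign-propagation step built on the wrong order is also a non sequitur. Knowing \(f^{(k)}(z_k)=0\) and that \(f^{(k)}\) has no zero on \((z_k,z_{k+1}]\) says nothing about the sign of \(f^{(k)}(z_{k+1})\). Your monotonicity information is only available to the right of \(z_{k+1}\).

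With the correct order the step becomes simpler. For \(k\geq1\), all zeros of \(f^{(k)}\) in \([-\ell,0]\) lie in \((x_1,x_n)\). The inductive hypothesis \(\eta f^{(k+1)}>0\) on \((z_{k+1},0]\) makes \(\eta f^{(k)}\) strictly increasing on \([z_{k+1},0]\), an interval that contains the zero \(z_k<0\). Hence \(\eta f^{(k)}>0\) on \((z_k,0]\). Starting from your base case \(k=n-1\) and descending to \(k=1\) gives \(\sgn f'(0)=\eta\).

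Once corrected, your route is a genuinely different, divided-difference-free proof. It even yields \(\sgn f^{(k)}(0)=\eta\) for every \(0\leq k\leq n-1\). The cost is that it needs the exact zero count and the ordering of largest zeros.

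The paper avoids that bookkeeping. It takes the \(n-1\) Rolle points \(\xi_i\) of \(f'\) and applies the same divided-difference mean-value theorem to \(f'\) at \(\xi_1,\ldots,\xi_{n-1},0\). This gives \(f'(0)/\prod_i(-\xi_i)=f^{(n)}(\zeta)/(n-1)!\) directly, using only the existence of those Rolle points.
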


\begin{proof}
The divided-difference mean value theorem
\cite[Proposition~43 and the Schwarz corollary that follows it]{deBoor2005}
gives
\[
  [x_1,\ldots,x_n,0]f=\frac{f^{(n)}(\xi)}{n!}
\]
for some \(\xi\in(x_1,0)\).  Since the values at the \(x_i\) vanish,
\[
  [x_1,\ldots,x_n,0]f
  =\frac{f(0)}{\prod_{i=1}^n(0-x_i)}.
\]
The denominator is positive, proving the first sign.  Rolle's theorem
supplies \(\xi_i\in(x_i,x_{i+1})\) with \(f'(\xi_i)=0\).  Apply the same
argument to \(f'\) at \(\xi_1,\ldots,\xi_{n-1},0\) to obtain
\[
  \frac{f'(0)}{\prod_{i=1}^{n-1}(0-\xi_i)}
  =\frac{f^{(n)}(\zeta)}{(n-1)!}
\]
for some \(\zeta\in(\xi_1,0)\).  This proves the second sign under the
stated finite regularity.
\end{proof}

\begin{lemma}[Confluent seam divided difference]
\label[lemma]{lem:confluent-seam}
Assume \eqref{eq:left-shape} and suppose that \(f\) has \(n-1\) distinct
roots \(x_1<\cdots<x_{n-1}<0\).  Set
\[
  a=f(0),\quad b=f'(0),\quad
  Q=\prod_{i=1}^{n-1}(-x_i),\quad
  L=\sum_{i=1}^{n-1}\frac1{-x_i}.
\]
Then \(\sgn(b-La)=\eta\).
\end{lemma}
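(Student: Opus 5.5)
We use the confluent divided difference on the $n+1$ nodes $x_1,\ldots,x_{n-1},0,0$, in which the seam node is doubled. Since $f\in C^n([-\ell,0])$, the Hermite--Genocchi representation, or equivalently the confluent version of the divided-difference mean value theorem in \cite{deBoor2005} already used in \cref{lem:n-negative-signs}, gives
\[
  [x_1,\ldots,x_{n-1},0,0]f=\frac{f^{(n)}(\xi)}{n!}
\]
for some $\xi\in(x_1,0)$. Condition \eqref{eq:left-shape} then says that this divided difference has sign $\eta$. If one wants to avoid citing the confluent form, there is an alternative. Write $f-H$, where $H$ is the degree-$n$ Hermite interpolant that vanishes at the $x_i$ and matches $f(0)$ and $f'(0)$. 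The difference $f-H$ has $n+1$ zeros counted with multiplicity, including a double zero at $0$. Applying Rolle's theorem $n$ times yields a zero of $(f-H)^{(n)}$ in $(x_1,0)$. That is, $f^{(n)}(\xi)=n!\cdot\mathrm{lc}(H)$, where $\mathrm{lc}(H)$ denotes the leading coefficient of $H$.

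Next I compute the divided difference explicitly. Put $\omega(x)=\prod_{i=1}^{n-1}(x-x_i)$, so that $\omega(0)=(-1)^{n-1}Q$. Because $f$ vanishes at every $x_i$, the interpolant can be written as $H(x)=\omega(x)(\alpha+\beta' x)$. Here $\beta'$ is the leading coefficient, and $\alpha+\beta'x$ is the linear interpolant of $g=f/\omega$ at the double node $0$. It follows that $\beta'=g'(0)=\bigl(b\,\omega(0)-a\,\omega'(0)\bigr)/\omega(0)^2$. We also have $\omega'(0)/\omega(0)=\sum_i 1/(0-x_i)=L$. This gives
\[
  \beta'=\frac{b-La}{\omega(0)}=\frac{(-1)^{n-1}(b-La)}{Q}.
\]

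At this point there is a sign subtlety. The divided difference on these nodes should equal $(b-La)/Q$, with positive denominator, as the main text asserts. The factor $(-1)^{n-1}$ above suggests I should recheck the orientation. Indeed, $\omega(x)=\prod(x-x_i)$ evaluated at $0$ is $\prod(-x_i)=Q>0$, since every $-x_i>0$. So there is no sign issue: $\omega(0)=Q$ and $\beta'=(b-La)/Q$. Combining this with the first step gives $\sgn(b-La)=\sgn f^{(n)}(\xi)=\eta$.

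The main obstacle I expect is justifying the Rolle/Hermite step at exactly the available regularity: $f\in C^n$ on $[-\ell,0]$ only up to the one-sided endpoint, with a double node at that endpoint. The cleanest route is the repeated-Rolle argument applied to $f-H$ on $[x_1,0]$. At the first stage this gives $n-1$ interior zeros of $(f-H)'$ together with the zero of $(f-H)'$ at $0$, which makes $n$ zeros. Each subsequent derivative loses one zero, so after $n$ stages we obtain a point $\xi\in(x_1,0)$. The endpoint zero at $0$ is used only once, at the first differentiation, and one-sided derivatives suffice for this.
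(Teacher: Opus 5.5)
Your proof is correct and follows the paper's argument. Both use the degree-$n$ Hermite interpolant $H(x)=\prod_{i=1}^{n-1}(x-x_i)\,(ux+v)$, whose leading coefficient is $u=(b-La)/Q$, and then apply repeated Rolle to $f-H$ (simple zeros at the $x_i$, a double zero at $0$) to obtain $f^{(n)}(\xi)=n!\,u$; your momentary factor $(-1)^{n-1}$ was rightly discarded because $\prod_i(0-x_i)=Q>0$, and your endpoint zero count needs only the one-sided derivative at $0$.
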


\begin{proof}
Let \(H\) be the polynomial of degree at most \(n\) satisfying
\(H(x_i)=0\), \(H(0)=a\), and \(H'(0)=b\).  It has the form
\[
  H(x)=\prod_{i=1}^{n-1}(x-x_i)(ux+v),
\]
and the two seam conditions give
\begin{equation}
  v=\frac aQ,
  \qquad
  u=\frac{b-La}{Q}.
  \label{eq:Hermite-leading-coefficient}
\end{equation}
The function \(f-H\) vanishes at the \(n-1\) nodes and to order at least
two at zero.  This is the confluent repeated-node case of the same
mean-value principle
\cite[the Schwarz corollary following Proposition~43]{deBoor2005};
repeated Rolle gives
\(\xi\in(x_1,0)\) with
\[
  0=(f-H)^{(n)}(\xi)=f^{(n)}(\xi)-n!u.
\]
Thus \(u\) has sign \(\eta\), and \(Q>0\) completes the proof.
\end{proof}

\begin{proof}[Proof of \cref{thm:cross-seam-hermite-zero}]
Replace \(f\) by \(g=\sigma f\).  This leaves the zero set unchanged,
makes \(g'\) strictly increasing on the active branch, and gives
\(\sgn g^{(n)}=\eta\sigma=: \tau\) on the inactive branch.  Repeated
Rolle and \cref{lem:positive-seam-signs} give
\begin{equation}
  N_-(g)+z_0(g)\leq n,
  \qquad
  N_+(g)\leq
  \begin{cases}
    2,&z_0(g)=0,\\
    1,&z_0(g)=1.
  \end{cases}
  \label{eq:separate-estimates}
\end{equation}

Suppose first that \(\tau=-1\).  If \(g(0)=0\), the separate estimates
give a total at most \(n+1\).  If \(g(0)\neq0\) and
\(N_-(g)\leq n-1\), the same follows.  In the remaining case
\(N_-(g)=n\), \cref{lem:n-negative-signs} gives \(g(0)<0\), whereas two
active zeros would give \(g(0)>0\).  Hence \(N_+(g)\leq1\).

Now let \(\tau=1\).  If \(g(0)\neq0\), only two configurations could
exceed \(n\).  If \(N_-(g)=n\), then
\cref{lem:n-negative-signs} gives \(g(0)>0\) and \(g'(0)>0\), excluding
every active zero.  If \(N_-(g)=n-1\) and there were two active zeros,
\cref{lem:positive-seam-signs} would give \(a=g(0)>0\) and
\(b=g'(0)<0\).  But \cref{lem:confluent-seam} requires \(b-La>0\),
while \(L>0\) makes it negative.  If instead zero itself is a root, a
total above \(n\) requires \(n-1\) inactive roots and one active root.
The confluent lemma with \(a=0\) gives \(g'(0)>0\), whereas the seam and
active roots force \(g'(0)<0\).

Finally, an inactive zero interval contradicts the signed \(n\)th
derivative.  An active zero interval makes \(g'\) vanish at two points and
contradicts strict increase.  An interval crossing the seam contains an
interval of one of these types.
\end{proof}

\subsection{Uniform shape of the full ramified remainder}

\begin{proof}[Proof of \cref{prop:shape-control}]
The inactive assertion follows by continuity from
\(\partial_q^nS(0,0)=n!c\).  For the active assertion put
\[
  \widehat K(s,p)=K(s,s^\nu,p),
  \qquad
  M(s,p)=\Delta_q(s^\nu,p).
\]
Because \(\alpha_\nu>1\), the ramified summand and its \(q\)-derivative
tend to zero at the seam, so \(\Delta\) is \(C^1\) in \(q\) and
\(M(0,p)=S_q(0,p)\).  For \(s>0\),
\[
  M(s,p)=S_q(s^\nu,p)
  +\frac{\nu+1}{\nu}s\widehat K(s,p)
  +\frac1\nu s^2\widehat K_s(s,p).
\]
Differentiation gives
\[
  \begin{aligned}
  M_s(s,p)={}&\nu s^{\nu-1}S_{qq}(s^\nu,p)
  +\frac{\nu+1}{\nu}\widehat K(s,p)\\
  &+\frac{\nu+3}{\nu}s\widehat K_s(s,p)
  +\frac1\nu s^2\widehat K_{ss}(s,p).
  \end{aligned}
\]
The assumed regularity makes this continuous at the origin, where
\(M_s(0,0)=(\nu+1)d/\nu\).  Shrinking uniformly in \((s,p)\) proves
\eqref{eq:positive-uniform-sign}.
\end{proof}

\subsection{Lifted coefficient paths and the active scale}

\begin{proof}[Proof of \cref{thm:rank-lifting-root-realization}]
After restricting to an \(n\)-dimensional parameter slice, the submersion
theorem gives a local right inverse
\[
  \pi:(\R^n,0)\longrightarrow(\R^m,0),
  \qquad A(\pi(\bfa))=\bfa.
\]
Take
\(p_t=\pi(\widehat a_0(t),\ldots,\widehat a_{n-1}(t))\).
Since \(\widehat a_j(t)=O(t^{n-j})\) for \(j<n\), this path is \(O(t)\).

Put \(c(p)=\partial_q^nS(0,p)/n!\).  Uniform Taylor expansion gives
\begin{equation}
  S(q,p_t)
  =\mathcal P_{n,t}(q)
  +(c(p_t)-c)q^n+q^{n+1}R_0(q,p_t),
  \label{eq:sharp-Taylor}
\end{equation}
with \(R_0\) and its first derivative bounded.  If
\(Q(x)=\prod_{j=1}^n(x+r_j)\), then
\begin{align*}
  t^{-n}S(tx,p_t)
  &=cQ(x)+(c(p_t)-c)x^n+tx^{n+1}R_0(tx,p_t),\\
  \frac{\dd}{\dd x}\bigl[t^{-n}S(tx,p_t)\bigr]
  &=cQ'(x)+n(c(p_t)-c)x^{n-1}+t x^nR_1(tx,p_t).
\end{align*}
The rescaled functions converge in \(C^1\) on compact \(x\)-sets to
\(cQ\).  Its simple roots persist, giving
\eqref{eq:persistent-negative-roots}.  The ramified term vanishes on this
branch, so these are roots of the full displacement.

On the active scale, set \(q=t^{\gamma_{n,\nu}}y\), \(y>0\), and put
\[
  Z_t(y)=
  \left(t^{n/(\nu+1)}y^{1/\nu},
        t^{\gamma_{n,\nu}}y,p_t\right).
\]
The constant coefficient of \(\mathcal P_{n,t}\) is
\(c(\prod_jr_j)t^n\).  For \(j\geq1\),
\[
  t^{-n}\widehat a_j(t)q^j
  =O\!\left(t^{j(\gamma_{n,\nu}-1)}\right)\longrightarrow0.
\]
The varying \(q^n\) coefficient and Taylor remainder also vanish after
division by \(t^n\), while
\[
  t^{-n}q^{\alpha_\nu}K(q^{1/\nu},q,p_t)
  =y^{\alpha_\nu}K(Z_t(y))
  \longrightarrow d y^{\alpha_\nu}.
\]
Thus the rescaled displacement converges locally uniformly on \(y>0\) to
\eqref{eq:positive-limit-function}.

For derivative control define
\(F_t(y)=t^{-n}\Delta(t^{\gamma_{n,\nu}}y,p_t)\).  Then
\begin{equation}
  F_t'(y)=t^{\gamma_{n,\nu}-n}
  \Delta_q(t^{\gamma_{n,\nu}}y,p_t).
  \label{eq:positive-derivative-scaling}
\end{equation}
The differentiated smooth terms converge by \eqref{eq:sharp-Taylor}.  The
ramified contribution is \(y^{\alpha_\nu}K(Z_t(y))\), whose derivative is
\begin{align}
  &\alpha_\nu y^{\alpha_\nu-1}K(Z_t(y))
  \notag\\
  &\quad+y^{\alpha_\nu}\left[
    \frac{t^{n/(\nu+1)}}{\nu}y^{1/\nu-1}\partial_1K(Z_t(y))
    +t^{\gamma_{n,\nu}}\partial_2K(Z_t(y))
  \right].
  \label{eq:ramified-scaled-derivative}
\end{align}
On every \(I\Subset(0,\infty)\), the powers of \(y\) are bounded and
\(K\in C^2\) gives
\[
  F_t'\longrightarrow
  \alpha_\nu d y^{\alpha_\nu-1}
  \quad\text{uniformly on }I.
\]
The factor \(y^{1/\nu-1}\) may be singular at \(y=0\), so the argument uses
uniform convergence on compact subsets of \((0,\infty)\).

When \(cd<0\), the limiting function has the unique simple positive root
\[
  y_0=
  \left(\left|\frac cd\right|\prod_{j=1}^nr_j
  \right)^{\nu/(\nu+1)}.
\]
Uniform \(C^1\) convergence on a compact interval about \(y_0\) gives a
unique simple nearby root
\(q_{\mathrm{ram}}(t)=t^{n\nu/(\nu+1)}(y_0+o(1))\).
Equation \eqref{eq:positive-derivative-scaling} gives its derivative sign
\(\sgn d\), proving all assertions.
\end{proof}

\section{Contact-jet invariance and persistence}
\label{app:contact-strata}

This appendix supplies the differential-topological details behind the
contact-stratum statements in \cref{sec:ambient-codimension}.  The selected
parameter projection used in the ambient codimension theorem requires more
than a regular-level-set calculation.

\subsection{The selected incidence projection}

\begin{proof}[Proof of \cref{thm:ambient-contact-incidence}]
The regular-level-set theorem applied to
\eqref{eq:ambient-contact-jet} gives
\[
  \dim\mathcal I_\nu=m+2-\nu.
\]
The projection requires a separate argument because regularity of the
incidence manifold alone leaves the geometry of its image in parameter space
undetermined.

At the contact point, differentiation along the vector field gives
\[
  d_zH_k(z_0,p_0)\bigl[X_{p_0}(z_0)\bigr]
  =H_{k+1}(z_0,p_0).
\]
Consequently,
\begin{equation}
  D_z\mathcal J_\nu(z_0,p_0)
  \bigl[X_{p_0}(z_0)\bigr]
  =\bigl(0,\ldots,0,H_\nu(z_0,p_0)\bigr).
  \label{eq:app-contact-jet-flow-direction}
\end{equation}
Choose \(W\in T_{z_0}M\) with
\(d_zh_{p_0}(z_0)[W]\neq0\).  The first component of
\(D_z\mathcal J_\nu[W]\) is nonzero, whereas the first component in
\eqref{eq:app-contact-jet-flow-direction} vanishes.  These two images are
linearly independent.  Since \(M\) is two-dimensional,
\begin{equation}
  \rank D_z\mathcal J_\nu(z_0,p_0)=2;
  \label{eq:app-state-contact-jet-rank}
\end{equation}
in particular, \(D_z\mathcal J_\nu\) is injective.

Now let \((\xi,\eta)\in T_{(z_0,p_0)}\mathcal I_\nu\) lie in the kernel of
\(D\pi_P\).  Then \(\eta=0\), and tangency to the incidence manifold gives
\[
  D_z\mathcal J_\nu(z_0,p_0)[\xi]=0.
\]
By \eqref{eq:app-state-contact-jet-rank}, \(\xi=0\).  Hence
\(D(\pi_P|_{\mathcal I_\nu})\) is injective at \((z_0,p_0)\), and remains
injective after shrinking.  The local immersion theorem provides a smaller
neighbourhood on which \(\pi_P|_{\mathcal I_\nu}\) is an embedding.  All
projection statements refer to this neighbourhood of the selected incidence
point.  Its image has dimension \(m+2-\nu\), so its codimension in \(P\) is
\(\nu-2\).  The inverse of this local embedding gives the smooth
contact-point selection \(p\mapsto z(p)\).
\end{proof}

\subsection{Triangular covariance}

\begin{proof}[Proof of \cref{lem:contact-jet-triangular-invariance}]
Leibniz' rule gives
\begin{equation}
  L_X^k(uh)
  =\sum_{j=0}^k\binom{k}{j}
    (L_X^{k-j}u)(L_X^jh),
  \label{eq:app-defining-function-triangularity}
\end{equation}
which is lower triangular in
\((h,L_Xh,\ldots,L_X^kh)\), with diagonal coefficient \(u\).
Inductively, the differential operator \((aL_X)^k\) has the form
\begin{equation}
  (aL_X)^k=a^kL_X^k+
  \sum_{j=1}^{k-1}b_{kj}L_X^j
  \label{eq:app-time-triangularity}
\end{equation}
for smooth coefficients \(b_{kj}\); the assertion follows by applying
\(aL_X\) once more.  Combining
\eqref{eq:app-defining-function-triangularity} and
\eqref{eq:app-time-triangularity} proves
\eqref{eq:contact-jet-triangular-law}.  At a zero of
\(\mathcal J_\nu\), differentiating
\(\widehat{\mathcal J}_\nu=\mathsf T_\nu\mathcal J_\nu\) eliminates the term
\((D\mathsf T_\nu)\mathcal J_\nu\), and the order-\(\nu\) identity follows
from the same triangular calculation.  Since every diagonal entry is
nonzero, all rank assertions are preserved.

Naturality of Lie derivatives under a diffeomorphism gives
\eqref{eq:contact-jet-coordinate-covariance}.  The map
\((z,p)\mapsto(\Phi_p(z),p)\), together with any chosen local change of
parameter coordinates, is a diffeomorphism of the total space and commutes
with parameter projection up to that coordinate change.  It therefore carries
the selected embedded image germ to a diffeomorphic image germ of the same
codimension.
\end{proof}

\subsection{Persistence of the selected branch}

\begin{proof}[Proof of \cref{prop:ambient-stratum-persistence}]
Because \(D\mathcal J_\nu(z_0,p_0)\) is onto, choose \(\nu\) local
coordinates among the two state and \(m\) parameter coordinates on which
the corresponding square derivative is invertible.  Treating all remaining
coordinates as free variables, the implicit function theorem writes the
incidence set as a graph.  Invertibility of this square derivative is open,
so the same graph construction applies to every sufficiently small
perturbation and, in a smooth finite-dimensional perturbation family,
depends smoothly on the extra parameter.

The inequalities \(X\neq0\), \(dh\neq0\), and \(H_\nu\neq0\) are open.
At every nearby incidence point they again imply, by the calculation in
\eqref{eq:app-contact-jet-flow-direction}, that the state derivative of the
perturbed contact jet has rank two.  The projection argument in
\cref{thm:ambient-contact-incidence} therefore applies uniformly after a
further restriction.  Equivalently, retain a fixed nonzero minor of the
differential of the projected incidence map and apply the
parameter-dependent constant-rank theorem; this gives the asserted smooth
dependence of the embedded image germ.  Finally, the sign of \(H_\nu\) is
open.  The \(C^\nu\) topology is sufficient because the contact jet and its
first differential, together with \(H_\nu\), involve derivatives through
order \(\nu\).
\end{proof}

\section{Localized active-region passage and weighted blow-up}
\label{app:passage}

This appendix proves the fixed-contact and transverse passage theorems by
localization, continuation, rescaling, and event selection.  The
fixed-contact theorem produces a full smooth ramified remainder.  The
transverse theorem gives a value estimate that remains uniform as the
number of active components changes.

\subsection{Fixed even contact: full ramified smoothness}

\begin{proof}[Proof of \cref{prop:even-order-grazing-comparison}]
A parameter-dependent \(X_p^-\)-flow box gives coordinates \((t,I)\), with
\(z_p=(0,0)\), in which \(X_p^-=\partial_t\) and
\begin{equation}
  h_p(t,I)
  =I-\frac{\kappa_\nu(p)}{\nu!}t^\nu
  +t^{\nu+1}R_0(t,I,p)+tI R_1(t,I,p)+I^2R_2(t,I,p),
  \label{eq:grazing-flow-box}
\end{equation}
for jointly smooth \(R_0,R_1,R_2\).  This is an exact
Taylor--Hadamard decomposition.  The pure-\(t\) Taylor formula uses the
contact conditions to remove the powers \(t,\ldots,t^{\nu-1}\), while
Hadamard division in \(I\) gives
\[
  h_p(t,I)-h_p(t,0)=I g(t,I,p),
  \qquad g(0,0,p)=1,
\]
and a second division writes \(g-1=tR_1+IR_2\).  The normalization of the
first integral fixes the linear \(I\)-term.

Writing \(W_p=a_p\partial_t+b_p\partial_I\), the plus field satisfies
\begin{equation}
  \dot t=1+h_pa_p(t,I),
  \qquad
  \dot I=h_pb_p(t,I),
  \qquad
  b_p(0,0)=\beta(p).
  \label{eq:grazing-plus-field}
\end{equation}
For \(\rho=r^\nu\), put \(t=ru\) and
\(I=r^\nu+r^{\nu+1}w\).  Then
\[
  H(u,w,r,p)
  =r^{-\nu}h_p(ru,r^\nu+r^{\nu+1}w)
  =1-\frac{\kappa_\nu(p)}{\nu!}u^\nu+r\widetilde H(u,w,r,p)
\]
extends jointly smoothly to \(r=0\) on bounded sets.  On the plus
excursion,
\begin{equation}
  \frac{dw}{du}
  =\frac{H(u,w,r,p)b_p(ru,r^\nu+r^{\nu+1}w)}
  {1+r^\nu H(u,w,r,p)a_p(ru,r^\nu+r^{\nu+1}w)}.
  \label{eq:grazing-rescaled-ode}
\end{equation}
The denominator is uniformly positive for small \(r\).

We first localize every seam event in the entire fixed grazing box.  Choose
the box as \(|t|\leq\delta\).  Shrinking \(\delta\) and the parameter set,
and then choosing \(M>1\), gives \(c_0,r_0>0\) such that
\begin{equation}
  h_p(t,I)\leq-c_0|t|^\nu
  \quad\text{if}\quad
  Mr\leq|t|\leq\delta,
  \quad |I-r^\nu|\leq r^\nu,
  \quad0<r<r_0.
  \label{eq:grazing-annular-sign}
\end{equation}
Indeed, on this annulus \(r^\nu\leq M^{-\nu}|t|^\nu\); after \(M\) is
large, the terms \(tI R_1\) and \(I^2R_2\) are absorbed by the leading
term.

This estimate closes a continuation argument.  As long as
\(|I-r^\nu|\leq r^\nu\), the active part is confined to \(|t|<Mr\).
There \(|h_p|=O(r^\nu)\), \(\dot t\) is uniformly positive, and
\eqref{eq:grazing-plus-field} gives \(|dI/dt|\leq Cr^\nu\).  The active
interval has length at most \(2Mr\), so
\begin{equation}
  I(t)=r^\nu+O(r^{\nu+1})
  \label{eq:grazing-fixed-box-I-control}
\end{equation}
throughout the fixed box.  For small \(r\) this strictly improves the
bootstrap strip.  Hence all seam events lie in \(|t|<Mr\), and the field is
inactive on the rest of the box.

Set
\[
  a_\nu(p)=\left(\frac{\nu!}{\kappa_\nu(p)}\right)^{1/\nu}.
\]
On \([-M,M]\), \(H(u,0,r,p)\) converges in \(C^1\), uniformly in \(p\),
to
\[
  H_0(u,p)=1-\frac{u^\nu}{a_\nu(p)^\nu}.
\]
Uniform neighbourhoods of the two simple roots
\(u=\pm a_\nu(p)\), together with the sign gap on their complement, give
a unique first entry \(u_-(r,p)\) near \(-a_\nu(p)\).

Let \(w(u;r,p)\) solve \eqref{eq:grazing-rescaled-ode} with
\(w(u_-(r,p);r,p)=0\).  Smooth ODE dependence bounds \(w\) and
\(\partial_uw\) uniformly.  The physical event function
\[
  E(u,r,p)=H(u,w(u;r,p),r,p)
\]
therefore converges to \(H_0\) in \(C^1\).  A second implicit-function
argument gives the unique first exit \(u_+(r,p)\) near \(a_\nu(p)\), and
the sign gap proves \(E>0\) exactly between the two roots.  After exit the
physical trajectory has constant \(w_+(r,p)\); applying the same sign gap
to \(H(u,w_+,r,p)\), together with \eqref{eq:grazing-annular-sign}, shows
that no further seam event occurs in the fixed box.

The event maps and
\[
  \mathcal A_\nu(r,p)=w(u_+(r,p);r,p)
\]
are jointly smooth.  Hence
\(\mathcal D_{\mathrm{gr}}(r^\nu,p)=r^{\nu+1}\mathcal A_\nu(r,p)\).
At \(r=0\), equation \eqref{eq:grazing-rescaled-ode} gives
\begin{align*}
  \mathcal A_\nu(0,p)
  &=\beta(p)\int_{-a_\nu(p)}^{a_\nu(p)}
    \left(1-\frac{u^\nu}{a_\nu(p)^\nu}\right)\dd u\\
  &=\frac{2\nu}{\nu+1}a_\nu(p)\beta(p)
  =\mathfrak g_\nu(p).
\end{align*}

For \(\rho<0\), shrinking the box keeps the reference arc in \(h_p<0\),
so the physical and reference \(X_p^-\)-excursions agree.  At \(\rho=0\), the
physical field \(X_p^-+(h_p)_+W_p\) is locally Lipschitz.  The central
reference \(X_p^-\)-arc is a solution and touches the seam only at \(z_p\), so
uniqueness makes it the physical arc as well.  Before entry the two
trajectories coincide; after their exits both follow \(X_p^-\), and \(I_p\)
is its exact first integral.  Thus the computed \(I_p\)-difference reaches
the fixed outgoing section.  Finally, one-sided Hadamard factorization
\(\mathcal A_\nu(r,p)-\mathcal A_\nu(0,p)=rB_\nu(r,p)\) yields the full
smooth expansion in the statement.
\end{proof}

\subsection{Fixed and weighted-value transmission}

\begin{proof}[Proof of
\cref{prop:reference-return-composition,prop:weighted-value-transmission}]
We prove both transmission propositions together.  First write \(q=s^\nu\) and
\(\mathcal D_{\mathrm{gr}}(q,p)=s^{\nu+1}\mathcal A_\nu(s,p)\).  The exact
divided difference is
\begin{equation}
  \begin{aligned}
  &\mathcal R_p(q+\mathcal D_{\mathrm{gr}}(q,p))-\mathcal R_p(q)\\
  &\quad={}
  s^{\nu+1}\mathcal A_\nu(s,p)
  \int_0^1(\mathcal R_p)'
  \left(q+\theta s^{\nu+1}\mathcal A_\nu(s,p)\right)\dd\theta .
  \end{aligned}
  \label{eq:regular-transmission-divided-difference}
\end{equation}
The factor after \(s^{\nu+1}\) is jointly smooth for independent
\(s\geq0,q,p\), and its value at the origin is
\(L_0\mathfrak g_\nu(0)\).  This proves the full ramified expansion,
including its leading coefficient.  Under the common-itinerary,
transverse-section, and no-earlier-intersection hypotheses of
\cref{prop:reference-return-composition}, the displayed composition is the
actual first return.  This operation postcomposes the local passage
correction with the regular transition; a change of Poincar\'e section
instead acts by conjugacy.

For the weighted-value proposition, let \(H_r\) denote the active correction
along the stated weighted path.  The same divided difference writes the
transmitted correction as
\[
  H_r\int_0^1\mathcal R_{p_r}'(q_r+\theta H_r)\dd\theta.
\]
Uniformly on the compact normalized set, the integral is
\(L_0+O_K(r^2)\): the largest transverse parameter is \(O(r^2)\), while
\(q_r=O(r^\nu)\) and \(H_r=O(r^{\nu+1})\).  Multiplying the assumed
\(H_r=\beta_0 r^{\nu+1}\mathcal C+O_K(r^{\nu+2})\) proves the transmitted
estimate with the same \(O_K(r^{\nu+2})\) error.
\end{proof}

\subsection{Preparation and the transverse value estimate}

\begin{proof}[Proof of \cref{lem:transverse-cap-preparation}]
The regularity of the seam gives its graph by the implicit function
theorem.  Before centering, solve
\[
  \partial_t^{\nu-1}\Phi(\tau(p),p)=0
\]
for \(\tau(p)\).  The derivative with respect to \(\tau\) at the central
contact is \(\partial_t^\nu\Phi(0,0)=\nu!a\neq0\).  The centered graph is
\[
  \Phi_{\mathrm{new}}(t,p)
  =\Phi_{\mathrm{old}}(t+\tau(p),p)-\Phi_{\mathrm{old}}(\tau(p),p).
\]
Thus translations of \(t\) and \(I\) remove the \(t^{\nu-1}\) and
constant coefficients while preserving \(X_p^-=\partial_t\).  The rank
hypothesis and constant-rank theorem give a transverse slice on which the
remaining coefficients are parameter coordinates.  Taylor's formula gives
\eqref{eq:prepared-cap-graph}.  Hadamard division by the regular graph gives
\eqref{eq:prepared-cap-switching-function}, and the chosen side orientation
makes \(g>0\) after shrinking.
The lower contact monomials disappear after \(\nu\) derivatives, whereas
\[
  \partial_t^\nu\bigl(t^{\nu+1}R(t,\mathbf u)\bigr)=O(t)
\]
uniformly for small \(\mathbf u\).  Since
\(\nu!a(\mathbf u)\to\nu!a>0\), one further shrink gives
\eqref{eq:prepared-cap-finite-type}.
\end{proof}

\begin{proof}[Proof of \cref{lem:cap-weighted-homogeneity}]
The identity follows from \(t=rs\).  On a compact coefficient set, the
leading term \(-at^\nu\) places all positive sets in a common bounded
interval.  Dominated convergence, or the elementary inequality
\begin{equation}
  |x_+-y_+|\leq|x-y|,
  \label{eq:positive-part-lipschitz}
\end{equation}
proves continuity, including at multiple boundary roots and at component
births and mergers.
\end{proof}

\begin{proof}[Proof of \cref{thm:transverse-cap-blow-up}]
Write
\[
  \mathbf u_r=(r^{\nu-1}u_1,\ldots,r^2u_{\nu-2})
\]
and let \(I_r(t)\) be the physical trajectory launched from
\(I_r(-T)=r^\nu\rho\).  Since the normalized coefficients range over a
compact set \(K\), there is \(M>1\), independent of \(r\) and the chosen
point in \(K\), for which
\begin{equation}
  \Phi_{\mathbf u_r}(t)>I
  \quad\text{whenever}\quad
  Mr\leq|t|\leq T,
  \qquad |I-r^\nu\rho|\leq r^\nu,
  \label{eq:cap-support-localization}
\end{equation}
after reducing \(r_K\).  Indeed, \(a(\mathbf u_r)\geq a/2\), and for
\(|t|\geq Mr\),
\[
  \left|\sum_{j=1}^{\nu-2}r^{\nu-j}u_jt^j\right|
  \leq |t|^\nu
  \sum_{j=1}^{\nu-2}|u_j|M^{-(\nu-j)}.
\]
Choose \(M\) so the sum is a small fraction of \(a|t|^\nu\); shrinking the
fixed box absorbs \(t^{\nu+1}R\), and increasing \(M\) handles the bounded
level \(\rho\).  The same estimates give \(|h_{\mathbf u_r}|\leq Cr^\nu\)
for \(|t|\leq Mr\) in the bootstrap strip.

On the active side, \(t\) is a valid independent variable and
\begin{equation}
  \frac{\dd I_r}{\dd t}
  =\frac{(h_{\mathbf u_r})_+B_{\mathbf u_r}(t,I_r)}
  {1+(h_{\mathbf u_r})_+A_{\mathbf u_r}(t,I_r)}.
  \label{eq:physical-cap-I-equation}
\end{equation}
The denominator is at least \(1/2\).  Before a hypothetical first escape
from \(|I_r-r^\nu\rho|<r^\nu\), the right-hand side is supported on an
interval of length at most \(2Mr\) and has size \(O(r^\nu)\).  Hence
\begin{equation}
  I_r(t)=r^\nu\rho+O_K(r^{\nu+1}),
  \label{eq:physical-reference-cap-error}
\end{equation}
which closes the continuation argument and confines every contributing
component to the rescaled window.

For \(|s|\leq M\), set
\(I_r(rs)=r^\nu\rho+r^{\nu+1}w_r(s)\).  The preceding estimate bounds
\(w_r\).  Taylor expansion of the prepared graph gives, uniformly on the
same compact set,
\begin{align}
  &r^{-\nu}h_{\mathbf u_r}
  (rs,r^\nu\rho+r^{\nu+1}w_r(s))
  \notag\\
  &\qquad={}
  g(0,0,0)\left(\rho-as^\nu-
    \sum_{j=1}^{\nu-2}u_js^j\right)+O_K(r).
  \label{eq:rescaled-physical-cap-function}
\end{align}
Here \(a(\mathbf u_r)-a=O(r^2)\), while both the graph remainder and the
physical displacement contribute \(O(r)\).  Applying
\eqref{eq:positive-part-lipschitz} in \eqref{eq:physical-cap-I-equation},
and using smoothness of \(A_{\mathbf u},B_{\mathbf u}\), yields
\begin{equation}
  w_r'(s)=\beta\left(\rho-as^\nu-
  \sum_{j=1}^{\nu-2}u_js^j\right)_++O_K(r)
  \label{eq:rescaled-physical-cap-ode}
\end{equation}
uniformly for \(|s|\leq M\).  This uniform argument treats the full positive
set at once, independently of its number of connected components.

The trajectory agrees with the reference before \(-Mr\), so
\(w_r(-M)=0\), and its \(I\)-coordinate is constant after \(Mr\).
Integration gives
\[
  r^{-(\nu+1)}\mathcal D_{\mathrm{cap}}
  (r^\nu\rho,\mathbf u_r)
  =\beta\mathcal C_{\nu,a}(\rho,\mathbf u)+O_K(r).
\]
The definition of \(M\) makes the cap integrand vanish outside
\([-M,M]\).  Multiplication by \(r^{\nu+1}\) proves the claimed estimate,
uniformly across the polynomial discriminant.
\end{proof}

\section{Boundary-event sensitivity}
\label{app:boundary-events}

This appendix proves
\cref{thm:prepared-multiwell-boundary-curvature}.  Differentiating a
regular passage creates one endpoint term at every entry and exit.  They
all have the same sign and dominate the bounded interior variation.  The
separate issue at births and mergers is first-order regularity: we prove it
directly from scalar-flow difference quotients and then join the regular
chambers.  Standard transverse-event sensitivity gives the first-order
background
\cite{BurdenSastryKoditschekRevzen2016,KhanBarton2017,
SacconVandeWouwNijmeijer2014}; here the event slopes collapse, so the
multi-event sign, uniform divergence, and continuation across the event
discriminant require a separate calculation.

\begin{proof}[Proof of
  \cref{thm:prepared-multiwell-boundary-curvature}]
Choose \(T\) so that \(\Phi_0(t)>0\) on
\([-T,T]\setminus\{0\}\).  The right-hand side of
\eqref{eq:multiwell-physical-ode} is locally Lipschitz in \(I\), uniformly
in \((t,\theta)\), so its solution is unique and depends continuously on
\((q,\theta)\).  The central solution is \(I\equiv0\).  Since
\(\Phi_0(\pm T)>0\), shrinking the initial and parameter germs makes both
endpoints strictly inactive.  If \(q\leq\tau_\theta\), the constant
function \(I\equiv q\) is the unique solution, and hence \(D=0\).

\paragraph{Finite event topology.}
The finite-type inequality implies, by repeated Rolle, that
\(\Phi_\theta'\) has at most \(\nu-1\) distinct zeros.  Between consecutive
critical times the graph is strictly monotone.  At any boundary point of
the active set, with \(h=I-\Phi_\theta\),
\begin{equation}
  h'=-\Phi_\theta'
  \qquad\text{when }h=0.
  \label{eq:multiwell-boundary-direction}
\end{equation}
Thus a regular entry has \(\Phi_\theta'<0\), a regular exit has
\(\Phi_\theta'>0\), and one monotonicity interval contains at most one
event.  The active set is consequently a finite union of intervals, paired
because both box endpoints are inactive.  There are at most \(\nu\)
monotonicity intervals and therefore at most \(\nu\) events.  Since the
events are paired, the number of active components satisfies
\(N\leq\nu/2\), proving \eqref{eq:multiwell-component-bound}.  The active
set cannot contain a seam
interval: if \(h=0\) on an interval, then \(I'=0\) there and hence
\(\Phi_\theta'=0\) there, contradicting
\(\partial_t^\nu\Phi_\theta\geq\kappa_*>0\).

A nontransverse event can occur only at a critical time \(t_c\) of
\(\Phi_\theta\).  Uniqueness for both the forward and reversed scalar
equations makes the time-\(t_c\) flow map
\(q\mapsto I(t_c;q,\theta)\) injective and strictly increasing.  Therefore
\(I(t_c;q,\theta)=\Phi_\theta(t_c)\) has at most one solution in \(q\).
There are at most \(\nu-1\) exceptional levels in
\([\tau_\theta,\rho_0)\), including the threshold when it is exceptional.

\paragraph{First variation through births and mergers.}
Put
\[
  f_\theta(t,I)=(I-\Phi_\theta(t))_+B(t,I,\theta).
\]
For two initial values \(q\) and \(q+\delta\), strict order permits the
bounded divided difference
\[
  A_\delta(t)=
  \frac{f_\theta(t,I(t;q+\delta,\theta))
       -f_\theta(t,I(t;q,\theta))}
       {I(t;q+\delta,\theta)-I(t;q,\theta)}.
\]
The solution difference satisfies a scalar linear equation, so
\begin{equation}
  \frac{I(t;q+\delta,\theta)-I(t;q,\theta)}{\delta}
  =\exp\left(\int_{-T}^{t}A_\delta(s)\dd s\right).
  \label{eq:multiwell-difference-quotient}
\end{equation}
The zero set of \(h(t)=I(t;q,\theta)-\Phi_\theta(t)\) is finite by the
event argument above; the exclusion of seam intervals also rules out
accumulation.  Away from that null set, dominated
convergence gives
\begin{equation}
  J(t):=\partial_qI(t;q,\theta)
  =\exp\left(\int_{-T}^{t}
    \mathbf 1_{\{h>0\}}
    \mathcal F_I(s,I(s),\theta)\dd s\right)>0,
  \label{eq:multiwell-first-variation}
\end{equation}
where \(\mathcal F=(I-\Phi_\theta)B\) on the active set.  The same
argument for a convergent sequence \((q_j,\theta_j)\to(q,\theta)\) uses
uniform convergence of the trajectories and almost-everywhere convergence
of the active indicators.  It proves joint continuity of \(J\), including
at births, mergers, and simultaneous equal-depth events.  Hence
\begin{equation}
  D_q(q,\theta)=J(T;q,\theta)-1
  \label{eq:multiwell-outgoing-first-variation}
\end{equation}
exists and is jointly continuous.  At \(q=\tau_\theta\) the active set
shrinks to a null set, so \(J(T)\to1\); this matches the inactive
derivative.  Transverse event selection and \(C^2\) ODE dependence give
\(C^2\) dependence on every regular chamber.

\paragraph{The multi-event second variation.}
On a regular chamber use
\eqref{eq:multiwell-active-union}.  Equation
\eqref{eq:multiwell-first-variation} gives
\[
  \log J(T)=\int_{\mathcal A(q,\theta)}
  \mathcal F_I(t,I(t),\theta)\dd t.
\]
At any regular event \(c\), differentiating
\(I(c;q,\theta)=\Phi_\theta(c)\) and using \(I'(c)=0\) yields
\begin{equation}
  \partial_qc=\frac{J(c)}{\Phi_\theta'(c)}.
  \label{eq:multiwell-event-derivative}
\end{equation}
Differentiate the integral over all active components.  Since
\(\mathcal F_I=B\) at every event, the endpoint contributions from
\eqref{eq:multiwell-event-derivative}, together with the interior
variation \(\mathcal F_{II}J\), give
\eqref{eq:physical-passage-curvature-formula}.  Notice that the factor
\(J(a_k)\) remains at every later entry; only the first entry has
\(J(a_1)=1\).

\paragraph{Uniform endpoint domination.}
As \(|q|+\|\theta\|\to0\), the trajectories converge uniformly to zero.
If active times failed to converge uniformly to \(0\), a limiting active
time \(t_*\neq0\) would satisfy \(\Phi_0(t_*)\leq0\), contrary to the
choice of \(T\).  Consequently the total active length \(L(\rho)\) and
largest regular event slope \(\omega(\rho)\) defined in
\eqref{eq:multiwell-active-length-modulus}--%
\eqref{eq:multiwell-event-slope-modulus} both tend to zero with \(\rho\).
The empty-event convention makes \(\omega\) defined throughout the germ.
Every regular event lies in the closure
of its active set, so the same localization applies to its position and
slope.

Shrink once more so that \(B\) has the sign of \(\beta\) and
\(|B|\geq b_0>0\).  Choose \(A,C_2\) with
\(|\mathcal F_I|\leq A\) and \(|\mathcal F_{II}|\leq C_2\) on the
common box.  From \eqref{eq:multiwell-first-variation},
\[
  e^{-AL(\rho)}\leq J(t)\leq e^{AL(\rho)},
\]
so \(1/2\leq J\leq2\) for small \(\rho\).  Every nonempty active set has
at least one entry--exit pair.  Multiplication of the bracket in
\eqref{eq:physical-passage-curvature-formula} by \(\sgn\beta\) therefore
gives the lower bound
\[
  \frac{b_0}{\omega(\rho)}-2C_2L(\rho).
\]
Since \(\omega(\rho)\to0\) and \(L(\rho)\to0\), shrink \(\rho_1\) so that
this lower bound is nonnegative for \(0<\rho\leq\rho_1\).  All additional
wells add positive endpoint terms.  The outer factor
\(J(T)\geq1/2\) proves the explicit bound
\eqref{eq:physical-passage-quantitative-curvature}.  The same limits show
that this bound yields
\eqref{eq:physical-passage-curvature-sign} with any prescribed modulus.
Equations \eqref{eq:multiwell-first-variation} and
\eqref{eq:multiwell-outgoing-first-variation} also give
\(D,D_q\to0\).

For fixed \(\theta\), partition
\([q_1,q_2]\) at its finitely many exceptional levels.  Integrating the
regular curvature estimate first on compact subintervals trimmed away from
the exceptional endpoints, and then using continuity of \(D_q\) as the
trimming tends to zero, gives
\eqref{eq:multiwell-derivative-increment}.  This completes the proof.
\end{proof}

\subsection{Transmission of the boundary curvature}

\begin{proof}[Proof of
  \cref{prop:threshold-curvature-transmission,%
cor:multiwell-curvature-transmission}]
Set
\[
  G(q,p)=\mathcal T_{\mathcal R}[H](q,p)
  =\mathcal R_p(q+H(q,p))-\mathcal R_p(q).
\]
The assumed \(C^1\) regularity of \(H\) makes \(G_q\) continuous across
the active-level interval.  At every regular point,
\[
  G_{qq}
  =\mathcal R_p''(q+H)(1+H_q)^2
  +\mathcal R_p'(q+H)H_{qq}
  -\mathcal R_p''(q).
\]
Along the active germ, continuity of \(\tau\) and the assumptions
\(H,H_q\to0\) give \((q+H,p)\to(0,0)\).  After shrinking,
\(\mathcal R_p'(q+H)\) has the sign of \(L_0\) and
modulus at least \(|L_0|/2\).  The other second-derivative terms remain
bounded, while \(H_{qq}\) has the sign of \(\beta_0\) with divergent
modulus.  Hence \(G_{qq}\) has the sign of \(L_0\beta_0\) and any
prescribed lower modulus on every regular interval.  Partition at the
finite exceptional set and use continuity of \(G_q\) to join these
interval estimates.  This proves
\cref{prop:threshold-curvature-transmission}.

For \cref{cor:multiwell-curvature-transmission}, specialize
\((H,p,\beta_0)=(D,\theta,\beta)\).  The preceding boundary-event theorem
supplies the active-side hypotheses.  On the inactive side \(D=D_q=0\),
so the transmitted correction and its first derivative vanish there and
at the threshold, as claimed.
\end{proof}

\section{Closed realization: return and rank calculations}
\label{app:closed-realization}

This appendix supplies the common-return-domain, Poincar\'e expansion,
coefficient-rank, and contact-jet-rank calculations used in
\cref{sec:closed-polynomial-realization}.

\begin{proof}[Proof of \cref{thm:closed-polynomial-realization}]
Work in polar coordinates on an annulus about the unit circle.  Any vector
field \(R+(H/2)V\) satisfies
\begin{equation}
  \dot\theta=1,
  \qquad
  \dot\varrho=(1+\varrho)H.
  \label{eq:closed-polynomial-polar-equations}
\end{equation}
Indeed,
\(x\dot y-y\dot x=x^2+y^2\), while
\(\dot\varrho=2x\dot x+2y\dot y=(x^2+y^2)H\).
For the physical field, \(H=G_\lambda+\beta(h_\ell)_+\).
To make the return domain explicit, choose a closed annulus
\(\lvert\varrho\rvert\leq a\), with \(a<1\).  The radial right-hand side is
locally Lipschitz in \(\varrho\), uniformly in the angular variable, and it
vanishes at \((\varrho,\lambda)=(0,0)\).  Hence on this annulus
\[
  |\dot\varrho|\leq L|\varrho|+C\lVert\lambda\rVert
\]
for fixed \(L,C\).  Gronwall's inequality supplies a smaller interval
\(\lvert\varrho\rvert<a_0\) and a parameter neighbourhood for which every
such solution remains in \(\lvert\varrho\rvert<a\) throughout
\(0\leq\theta-\theta_0\leq2\pi\).  Standard continuation then gives a
common full-turn domain.  Since
\(\theta(t)=\theta_0+t\), the selected ray cannot be met for
\(0<t<2\pi\); its time-\(2\pi\) intersection is therefore the first
return.  At
\(\lambda=0\), equation \eqref{eq:closed-polynomial-polar-equations} on
\(\varrho=0\) gives \(\dot\varrho=0\), so the unit circle is a
\(2\pi\)-periodic orbit.  Along it,
\begin{equation}
  h_\ell(\cos t,\sin t)
  =-(1-\cos t)^\ell
  =-2^{-\ell}t^{2\ell}+O(t^{2\ell+2}).
  \label{eq:closed-polynomial-osculation}
\end{equation}
It is strictly negative away from \(t=0\pmod{2\pi}\).  Thus \(z\) is the
unique grazing point on the central circle and
\(- (X^-_0)^\nu h_\ell(z)=\nu!/2^\ell\).

We also verify that the unfolding stays in the fixed-contact-order stratum.
Parametrize by \(t=\theta\) the \(X^-_\lambda\)-orbit through \(z\), and
write its radial variable as \(\varrho_\lambda(t)\), with
\(\varrho_\lambda(0)=0\).  Its equation is
\begin{equation}
  \varrho_\lambda'
  =(1+\varrho_\lambda)
  \left[
    \gamma \varrho_\lambda^n
    +\sum_{j=1}^{n-1}\lambda_j\varrho_\lambda^j
    +\lambda_0
      \bigl(1-\sqrt{1+\varrho_\lambda}\cos t\bigr)^\ell
  \right].
  \label{eq:closed-polynomial-tangent-orbit}
\end{equation}
Since the inhomogeneous term at \(\varrho=0\) is \(O(t^{2\ell})\), smooth ODE
estimates (or variation of constants followed by Gronwall's inequality)
give \(\varrho_\lambda(t)=O(t^{2\ell+1})\), uniformly for small \(\lambda\).
It follows that
\[
  1-\sqrt{1+\varrho_\lambda(t)}\cos t
  =\frac12t^2+O(t^4)+O(t^{2\ell+1}),
\]
and hence
\begin{equation}
  h_\ell\left(
    \sqrt{1+\varrho_\lambda(t)}\cos t,
    \sqrt{1+\varrho_\lambda(t)}\sin t
  \right)
  =-2^{-\ell}t^{2\ell}+O(t^{2\ell+1}).
  \label{eq:closed-polynomial-persistent-contact}
\end{equation}
This proves persistence of the order and of \(\kappa_\nu\).

Hadamard factorization is already explicit in
\eqref{eq:closed-polynomial-branches}:
\[
  \frac{X^+_\lambda-X^-_\lambda}{h_\ell}
  =\frac\beta2V.
\]
For \(\ell\geq2\), one has \(dh_\ell(z)=2\,dx\), and therefore
\begin{equation}
  dh_\ell(z)\left[\frac\beta2V(z)\right]=\beta.
  \label{eq:closed-polynomial-mismatch}
\end{equation}

Choose a ray \(\theta=-\delta\), with \(\delta>0\) small, immediately
before the grazing box.  Let \(\sigma(\lambda)\) be the \(\varrho\)-coordinate
on this ray of the reference \(X^-_\lambda\)-orbit that reaches \(z\).  Smooth ODE
dependence makes \(\sigma\) smooth.  We use
\(q=\varrho-\sigma(\lambda)\) on the ray.  Let \(I_\lambda\) be the first
integral normalized by \eqref{eq:grazing-first-integral}, transported to
this ray along the reference \(X^-_\lambda\)-flow, and write its restriction as
\(\rho_\lambda(q)\).  It vanishes at \(q=0\).  On the central orbit the
radial variational multiplier from the ray to \(z\) is one, while
\[
  dh_\ell(z)=2\,dx,
  \qquad
  \left.\frac{\partial}{\partial\varrho}
    \bigl(\sqrt{1+\varrho},0\bigr)\right|_{\varrho=0}
  =\left(\frac12,0\right).
\]
The normalization \(dI_0(z)=dh_\ell(z)\) therefore gives
\(\partial_q\rho_0(0)=1\).  Hadamard factorization and smooth parameter
dependence now yield the exact relation
\begin{equation}
  \rho_\lambda(q)=q\,u(q,\lambda),
  \qquad
  u(0,0)=1,
  \qquad
  u(q,\lambda)>0
  \label{eq:closed-polynomial-input-factor}
\end{equation}
after shrinking.

The compact part of the central circle outside the selected grazing box has
a strictly negative maximum of \(h_\ell\).  Continuous dependence, together
with the common annulus above, therefore keeps all relevant nearby physical
and reference arcs in \(h_\ell<0\) there.  Hence the selected box contains the
only seam events in the return itinerary.  Substitution of
\eqref{eq:closed-polynomial-input-factor} into
\cref{prop:even-order-grazing-comparison} preserves the ramified class:
\[
  \rho_{\lambda,+}^{1+1/\nu}
  =
  q_+^{1+1/\nu}u(q,\lambda)^{1+1/\nu},
  \qquad
  \rho_{\lambda,+}^{1/\nu}
  =
  q_+^{1/\nu}u(q,\lambda)^{1/\nu}.
\]
Both factors are smooth in the stipulated variables because \(u>0\).
The outgoing coordinate change and the remaining reference \(X^-_\lambda\)-flow are smooth
local diffeomorphisms.  Their divided-difference composition, exactly as in
\cref{prop:reference-return-composition}, therefore gives
\eqref{eq:closed-polynomial-displacement} with its single ramified term.

On the central reference \(X^-_0\)-orbit the transverse variational equation along
\(\varrho=0\) has multiplier one, because
\[
  \left.\partial_\varrho
    [\gamma(1+\varrho)\varrho^n]\right|_{\varrho=0}=0.
\]
Thus the outgoing reference derivative is one.  Together with
\(u(0,0)=1\), the total
transmission coefficient in the \(q\)-coordinate is \(L_0=1\).  Inserting
\eqref{eq:closed-polynomial-contact-data} into
\eqref{eq:grazing-leading-coefficient} gives
\[
  K(0,0,0)
  =\frac{2\nu}{\nu+1}
    \left(2^\ell\right)^{1/\nu}\beta
  =\frac{2\nu}{\nu+1}\sqrt2\,\beta.
\]

It remains to calculate the smooth jet.  Let \(\Phi_\lambda\) denote the
\(\varrho\)-transition of the reference \(X^-_\lambda\)-flow over one full
turn from the selected ray.  In the translated coordinate chosen above,
its smooth reference displacement is
\begin{equation}
  S(q,\lambda)
  =
  \Phi_\lambda\bigl(\sigma(\lambda)+q\bigr)
  -\sigma(\lambda)-q.
  \label{eq:closed-polynomial-reference-return}
\end{equation}
At \(\lambda=0\), the reference radial equation is the autonomous equation
\[
  \varrho'=\gamma(1+\varrho)\varrho^n.
\]
Its time-\(2\pi\) map is
\(q\mapsto q+2\pi\gamma q^n+O(q^{n+1})\), which proves the expansion for
\(S(q,0)\) in \eqref{eq:closed-polynomial-leading-data}.  Denote by
\(U_j(q)\) the
first variation of the smooth reference displacement in the
\(\lambda_j\)-direction.  For \(1\leq j<n\), the tangent orbit remains
\(\varrho=0\), so \(\partial_{\lambda_j}\sigma(0)=0\).  Write
\(\Phi^0_{b,a}\) for the central evolution from angle \(a\) to angle
\(b\).  If \(\varrho_q(\theta)\) is the central reference solution launched
from \(q\), the ordinary variational formula integrates
\[
  D\Phi^0_{\theta_0+2\pi,\theta}
    \bigl(\varrho_q(\theta)\bigr)
  (1+\varrho_q(\theta))\varrho_q(\theta)^j
\]
from \(\theta_0=-\delta\) to \(\theta_0+2\pi\).  Since
\(\varrho_q(\theta)=q+O(q^n)\) and the variational multiplier is
\(1+O(q^{n-1})\), this gives
\begin{equation}
  U_j(q)=2\pi q^j+O(q^{j+1}).
  \label{eq:closed-polynomial-positive-jet-variations}
\end{equation}
For \(j=0\), differentiating
\eqref{eq:closed-polynomial-reference-return} at \(q=0\) gives
\[
  U_0(0)
  =
  \left.\partial_{\lambda_0}\Phi_\lambda(0)\right|_{\lambda=0}
  +\bigl(\Phi_0'(0)-1\bigr)\partial_{\lambda_0}\sigma(0).
\]
The second term vanishes because \(\Phi_0'(0)=1\), while the first is the
full-period forcing.  Hence
\begin{equation}
  U_0(0)
  =\int_0^{2\pi}(1-\cos\theta)^\ell\dd\theta=C_\ell.
  \label{eq:closed-polynomial-constant-variation}
\end{equation}
Finally, \(1-\cos\theta=2\sin^2(\theta/2)\) and the standard even-power
integral give the closed expression for \(C_\ell\) in
\eqref{eq:closed-polynomial-rank-diagonal}.  Equations
\eqref{eq:closed-polynomial-positive-jet-variations} and
\eqref{eq:closed-polynomial-constant-variation} make the derivative of
the coefficient map triangular with determinant
\begin{equation}
  C_\ell(2\pi)^{n-1}\neq0.
  \label{eq:closed-polynomial-rank-determinant}
\end{equation}

Thus \(c=2\pi\gamma\),
\(d=2\nu\sqrt2\,\beta/(\nu+1)\), and the coefficient rank is \(n\).
Apply \cref{thm:sharp-cyclicity,cor:stability}.  Since every member of
the unfolding has an actual time-\(2\pi\) return, the fixed points attaining
the scalar bounds correspond precisely to actual periodic orbits of
\eqref{eq:closed-polynomial-physical-field}.  Their simplicity, hyperbolicity
after shrinking, and alternating stability follow from the same results.
\end{proof}

\begin{proof}[Proof of \cref{thm:closed-contact-jet-transverse-extension}]
The two extensions in \eqref{eq:closed-transverse-field} agree on
\(h_{\ell,\eta}=0\), so the physical field is continuous.  Because
\(s\mapsto s_+\) is Lipschitz and \(h_{\ell,\eta}\) is smooth, the physical
field is locally Lipschitz in the state variables.  In polar coordinates
its equations are
\[
  \dot\theta=1,
  \qquad
  \dot\varrho=(1+\varrho)
    \left[G_\lambda+\beta(h_{\ell,\eta})_+\right].
\]
On a fixed compact annulus the estimate used in the proof of
\cref{thm:closed-polynomial-realization} becomes
\[
  |\dot\varrho|
  \leq L|\varrho|+C(\lVert\lambda\rVert+\lVert\eta\rVert).
\]
After shrinking the state and parameter domains, Gronwall's inequality
gives a common full turn.  The identity \(\dot\theta=1\) proves the first
claim.

For the contact rank, order the rows of
\(\mathcal J_\nu^{\mathrm{cl}}\) by Lie-derivative
order \(0,1,\ldots,\nu-1\).  Along the central orbit
\((x,y)=(\cos t,\sin t)\),
\begin{equation}
  \left.
  \frac{\partial}{\partial\eta_j}
  (X^-_0)^k h_{\ell,\eta}(z)
  \right|_{\eta=0}
  =\left.
    \frac{\dd^k}{\dd t^k}\sin^j t
   \right|_{t=0}.
  \label{eq:closed-contact-triangular-entry}
\end{equation}
The right-hand side vanishes for \(k<j\) and equals \(j!\) for \(k=j\).
Choose a normal vector \(N\) with \(dh_\ell(z)[N]=1\).  Its column has
first entry one.  The column corresponding to displacement along
\(X^-_0(z)\) has entries
\((X^-_0)^{k+1}h_\ell(z)\), hence vanishes through row \(\nu-2\) and has
last entry
\[
  (X^-_0)^\nu h_\ell(z)
  =-\kappa_\nu=-\frac{\nu!}{2^\ell}\neq0.
\]
After eliminating entries below the indicated pivots, the
matrix with columns
\(N,\eta_1,\ldots,\eta_{\nu-2},X^-_0(z)\) is triangular, with determinant
\begin{equation}
  -\frac{\nu!}{2^\ell}
   \prod_{j=1}^{\nu-2}j!\neq0
  \label{eq:closed-contact-rank-determinant}
\end{equation}
up to the orientation chosen for the columns.  The implicit function
theorem therefore makes the order-\(\nu\) contact locus a codimension
\(\nu-2\) parameter submanifold.  By
\cref{thm:closed-polynomial-realization}, \((z,\lambda,0)\) already belongs
to that locus for every small \(\lambda\).  Local uniqueness in the
implicit function theorem identifies the locus with \(\eta=0\).

Finally, the reference field \(X^-_\lambda\) is independent of the
\(\eta\)-parameters, whereas the derivative of
\(\lambda\mapsto(a_0,\ldots,a_{n-1})\) is invertible by
\eqref{eq:closed-polynomial-rank-determinant}.  The derivative of
\eqref{eq:closed-total-codimension-map} is therefore block diagonal, with
invertible blocks of sizes \(n\) and \(\nu-2\).  This proves the total-rank
and codimension claims.

Apply \cref{lem:transverse-cap-preparation} in the \(X^-_\lambda\)-flow
box at the selected contact.  The transverse rank just proved supplies the
prepared contact coordinates, and
\eqref{eq:prepared-cap-finite-type} supplies a uniform finite-type
constant.  In \eqref{eq:closed-transverse-field} the branch difference is
\((\beta/2)h_{\ell,\eta}V\), and at the base point
\[
  dh_{\ell,0}(z)\left[\frac{\beta}{2}V(z)\right]=\beta\neq0.
\]
Continuity therefore keeps the active coefficient fixed in sign on the
common box.

It remains to connect this local passage to the complete return uniformly
in \(\eta\).  On the compact complement of the selected grazing box, the
central seam function satisfies \(h_{\ell,0}\leq-\delta_0\) for some
\(\delta_0>0\).  The perturbation
\(\sum_{j=1}^{\nu-2}\eta_jy^j\) is uniformly small there, and the common
annulus estimate gives uniform continuous dependence of both the physical
and reference arcs on \((q,\lambda,\eta)\).  After shrinking these germs,
all such arcs satisfy \(h_{\ell,\eta}< -\delta_0/2\) outside the selected
box.  Thus every seam event belongs to that box, and its complement is a
smooth reference \(X^-_\lambda\)-transition.

Write \(\widetilde q\) for the incoming height in the prepared flow box.
On the selected ray, translate the radial section coordinate so that the
same prepared reference orbit has radial coordinate zero.  Smooth flow-box
dependence gives a jointly smooth orientation-preserving coordinate change
\begin{equation}
  q_{\mathrm{rad}}=\chi_{\lambda,\eta}(\widetilde q),
  \qquad
  \chi_{\lambda,\eta}(0)=0,
  \qquad
  \partial_{\widetilde q}\chi_{\lambda,\eta}(0)>0.
  \label{eq:closed-prepared-section-coordinate}
\end{equation}
On \(\eta=0\), the prepared height is the based first-integral coordinate
from \eqref{eq:closed-polynomial-input-factor}; hence
\begin{equation}
  \chi_{\lambda,0}=\rho_\lambda^{-1},
  \qquad
  \partial_{\widetilde q}\chi_{0,0}(0)=1.
  \label{eq:closed-prepared-section-coordinate-central}
\end{equation}
If \(P^{\mathrm{rad}}_{\lambda,\eta}\) denotes the time-\(2\pi\) first
return in the based radial coordinate, then the prepared return is exactly
\begin{equation}
  P^{\mathrm{prep}}_{\lambda,\eta}
  =\chi_{\lambda,\eta}^{-1}\circ
   P^{\mathrm{rad}}_{\lambda,\eta}\circ
   \chi_{\lambda,\eta}.
  \label{eq:closed-prepared-return-conjugacy}
\end{equation}
Thus it records the same first intersection and the same periodic orbits.
The transformation law
\eqref{eq:section-coordinate-coefficient-law} therefore preserves the sign
of the leading smooth coefficient, the rank-\(n\) reference jet,
simplicity, stability, and the sign of the product of the smooth and active
coefficients.  The outgoing reference derivative is \(L_0=1\).  In
particular, the reference \(X^-_\lambda\)-displacement and its rank are those of
\cref{thm:closed-polynomial-realization} in equivalent based coordinates,
while the transmitted coefficient has the sign of \(\beta\).  The common
full-turn argument at the start of this proof supplies the actual
first-return domain.  Hence
\cref{thm:physical-multiwell-cyclicity} applies and gives
\eqref{eq:closed-transverse-physical-bound}; the positive factor relating
its active coefficient to \(\beta\) makes the sign coupling
\(\sgn(\gamma\beta)\).  Restriction to \(\eta=0\) gives exactly the family
of \cref{thm:closed-polynomial-realization}, so its sharpness and stability
conclusions complete the proof.
\end{proof}

\bibliographystyle{unsrt}
\bibliography{references}

\end{document}